\newtheorem{definition}{Definition}[section]
\newtheorem{theorem}[definition]{Theorem}
\newtheorem{proposition}[definition]{Proposition}
\newtheorem{lemma}[definition]{Lemma}
\newtheorem{corollary}[definition]{Corollary}
\newcommand{\nd}{\noindent}
\newcommand{\dG}{{\mathds G}}
\newcommand{\dR}{{\mathds R}}
\newcommand{\dC}{{\mathds C}}
\newcommand{\dQ}{{\mathds Q}}
\newcommand{\dZ}{{\mathds Z}}
\newcommand{\dP}{{\mathds P}}
\newcommand{\dA}{{\mathbb A}}
\newcommand{\cD}{\mathcal{D}}
\newcommand{\cE}{\mathcal{E}}
\newcommand{\cF}{\mathcal{F}}
\newcommand{\cH}{\mathcal{H}}
\newcommand{\cI}{\mathcal{I}}
\newcommand{\cK}{\mathcal{K}}
\newcommand{\cL}{\mathcal{L}}
\newcommand{\cM}{\mathcal{M}}
\newcommand{\cN}{\mathcal{N}}
\newcommand{\cO}{\mathcal{O}}
\newcommand{\cQ}{\mathcal{Q}}
\newcommand{\cR}{\mathcal{R}}
\newcommand{\cU}{\mathcal{U}}
\newcommand{\fa}{\mathfrak{a}}
\newcommand{\fg}{\mathfrak{g}}
\newcommand{\D}{\displaystyle}
\newcommand{\SC}{\scriptstyle}
\DeclareMathOperator{\Spec}{\textup{Spec}\,}
\DeclareMathOperator{\vol}{\textup{vol}}
\DeclareMathOperator{\im}{\textup{im}}
\DeclareMathOperator{\FL}{\textup{FL}}
\DeclareMathOperator{\id}{\textup{id}}
\DeclareMathOperator{\Gr}{\textup{Gr}}
\newcommand{\Gl}{\textup{Gl}}
\newcommand{\MHM}{\operatorname{MHM}}
\long\def\inhibe#1\endinhibe{\relax}
\newcommand{\ux}{\underline{x}}
\newcommand{\ur}{\underline{r}}
\newcommand{\upartial}{\underline{\partial}}
\newcommand{\tDD}{D_{\widetilde{V}}}
\newcommand{\tth}{\widetilde{h}}
\newcommand{\tchi}{\widetilde{\chi}}
\newcommand{\tdelta}{\widetilde{\mathcal{\delta}}}
\newcommand{\tOO}{A_{\widetilde{V}}}
\DeclareMathOperator{\SP}{\rm Sp}
\DeclareMathOperator{\U}{\mathcal{U}}
\DeclareMathOperator{\Sym}{\textup{Sym}}
\newcommand{\Lotimes}{\stackrel{\mathbf{L}}{\otimes}}
\newcommand{\Der}{\text{Der}}
\begin{document}
\title{Tautological systems and free divisors}
\author{Luis Narv\'{a}ez Macarro and Christian Sevenheck}

\maketitle

\begin{abstract}
We introduce tautological systems defined by
prehomogeneous actions of reductive algebraic groups. If the
complement of the open orbit is a linear free divisor satisfying
a certain finiteness condition, we show that these systems underly
mixed Hodge modules. A dimensional reduction is considered
and gives rise to one-dimensional differential systems generalizing the quantum differential equation of projective spaces.
\end{abstract}

\footnote{\noindent 2010 \emph{Mathematics Subject Classification.}
14F10,  32S40\\
Partially supported by MTM2016-75027-P, P12-FQM-2696, FEDER
and by the DFG project Se 1114/3-1.
}

\section{Introduction}
\label{sec:Introduction}

In this paper we consider systems of differential equations defined by certain prehomogeneous vector spaces, i.e.
actions of algebraic groups admitting an open dense orbit. These $\cD$-modules turn out to be examples of
so-called \emph{tautological systems}, as studied in \cite{HottaEq}, \cite{KapranovReduct} and more recently in a series of papers
\cite{Taut1, Taut2, Taut3}. The underlying philosophy is that in the case where the group is an algebraic torus, these
$\cD$-modules are nothing but the well-known GKZ-systems of Gelfand', Graev, Kapranov and Zelevinski (see, e.g., \cite{GKZ1, Adolphson}).
They play a prominent role in toric mirror symmetry (\cite{Giv7}, \cite{Ir2}, \cite{ReiSe, ReiSe2}), and part of our motivation for this
paper comes from the wish to understand whether certain tautological systems can potentially occur as quantum $\cD$-modules.

We consider more specifically the situation where a reductive algebraic group acts on a vector space $V$ such that there exists an
open and dense orbit. Moreover, we suppose that the complement of this orbit is a divisor, which has a reduced equation defined by the determinant of the matrix the columns of which are the coefficient
of the vector fields defined by the corresponding Lie algebra action. This is exactly the situation studied in \cite{BM,GMNS}, where the discriminant divisor is called \emph{linear free}: it is a free divisor in the sense of K.~Saito (see \cite{KS1}), that is, the sheaf of logarithmic vector fields is locally free, and it is linear free since the coefficients of these vector fields are linear functions.
One can consider the subgroup (called $A_D$ below) of the given algebraic group consisting of linear transformations stabilizing \emph{all} fibres of a reduced equation defining the divisor.
It is the group $\dG_m\times A_D$, acting on the space $\dC\times V$ that defines the tautological system, called $\cM$ in the main body of this article
(see Definition \ref{def:TautSystem}). This is a $\cD$-module on the dual space of $\dC\times V$, where the extra factor is needed to ensure some homogeneity property. From this it follows that the tautological system is regular \emph{if} it is holonomic, which may not be the case in general. Indeed, a theorem of Hotta \cite {HottaEq} shows that holonomicity holds if
the action of $G$ has finitely many orbits. In order to have this property, we restrict to the case of \emph{strongly Koszul free} divisors (see the beginning of Section \ref{sec:FreeDiv}), a notion that dates back to \cite{granger_schulze_rims_2010}. We prove in section 4 (see Theorem \ref{theo:TautSystMHM}) that for strongly Koszul linear free divisors,
the associated tautological system underlies a mixed Hodge module. More precisely, this main result can be formulated as follows.
\begin{theorem}[Compare Lemma \ref{lem:CalculationTautSystem} and Theorem \ref{theo:TautSystMHM} below]
Let $V=\dA^n$ and let $D\subset V$ be a strongly Koszul reductive linear free divisor
with (reduced) defining equation $h\in \cO_V$. Let $V^\vee$ be the dual space, and $h^\vee$ an equation for the dual divisor. Consider the (free) $\cO_V$-module of vector fields logarithmic to all fibres of $h^\vee$, that is,
$$
\theta(-\log h)=
\bigoplus_{i=1}^{n-1} \cO_V \delta^\vee_i
$$
where $\delta^\vee_i(h^\vee)=0$. Put $\widetilde{V}^\vee:=\dA^1_{\lambda_0}\times V^\vee$ with coordinates
$\lambda_0,\ldots, \lambda_n$, then for all
$\beta_0 \in \dZ$ such that
$\beta_0< \min\left(\bigcup_{k\geq 0} (k+n\cdot \{\textup{roots of }b_h(s)\})\right)$ (where $b_h(s) \in \dC[s]$ is the Bernstein-Sato polynomial of $h$) the $\cD_{\widetilde{V}}$-module (called tautological system associated to $D$ in the main body of this article)
$$
\frac{\cD_{\widetilde{V}^\vee}}{(\partial_{\lambda_0}^n-h(\partial_{\lambda_1},\ldots,\partial_{\lambda_n}),
\delta^\vee_1,\ldots,\delta^\vee_{n-1},\widetilde{\chi}^\vee+(n+1)+\beta_0)}
$$
(where $\widetilde{\chi}^\vee=\sum_{i=0}^{n} \lambda_i\partial_{\lambda_i}$)
underlies a mixed Hodge module on $\widetilde{V}^\vee$.
\end{theorem}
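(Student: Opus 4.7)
My strategy is to pass to the Fourier-Laplace transform of $\cM$, identify the transformed module with a manifestly Hodge-theoretic object on $\widetilde V$, and then exploit that on monodromic holonomic modules the Fourier-Laplace functor is realized by a Radon-type integral transform, which preserves mixed Hodge modules. Under the convention $\partial_{\lambda_i}\mapsto x_i$, $\lambda_i\mapsto -\partial_{x_i}$, and using the degree-$n$ homogeneity of $h$, the operator $\partial_{\lambda_0}^n-h(\partial_{\lambda_1},\ldots,\partial_{\lambda_n})$ becomes multiplication by $\widetilde h(x_0,\underline x):=x_0^n-h(\underline x)$; via $\FL(\lambda_j\partial_{\lambda_k})=-x_k\partial_{x_j}-\delta_{jk}$, the dual logarithmic fields $\delta^\vee_i$ are sent to linear vector fields logarithmic along $\{\widetilde h=0\}$; and the Euler relation $\widetilde\chi^\vee+(n+1)+\beta_0$ becomes $\widetilde\chi-\beta_0$ with $\widetilde\chi=\sum_{i=0}^n x_i\partial_{x_i}$. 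Thus $\FL(\cM)$ is a monodromic $\mathcal{D}_{\widetilde V}$-module, supported on the cyclic-cover hypersurface $\widetilde D:=\{\widetilde h=0\}\subset \widetilde V$, with an explicit $\beta_0$-homogeneity.

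Next I would identify $\FL(\cM)$ with a natural Hodge-theoretic object attached to $D$. The projection $\pi\colon \widetilde D\to V$, $(x_0,\underline x)\mapsto \underline x$, is étale of degree $n$ over $V\setminus D$, where $x_0^{\beta_0}$ pushes down to (the multivalued) $h^{\beta_0/n}$. Using the strong Koszul hypothesis to guarantee exactness of the Koszul complex of $(\delta_1,\ldots,\delta_{n-1},\widetilde\chi-\beta_0)$ on the proposed candidate, one should exhibit $\FL(\cM)$ as the natural $\mathcal{D}_{\widetilde V}$-extension to $\widetilde D\subset \widetilde V$ of the rank-one $\mathcal{D}_V$-module $\mathcal{O}_V[h^{-1}]\cdot h^{\beta_0/n}$. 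The hypothesis $\beta_0<\min\bigcup_{k\geq 0}(k+n\cdot\{\text{roots of }b_h\})$ is precisely what is required, via the functional equation $b_h(s)h^s=P(s,\partial)h^{s+1}$, to ensure $\mathcal{D}_V\cdot h^{\beta_0/n}=\mathcal{O}_V[h^{-1}]\cdot h^{\beta_0/n}$: no nonnegative integer shift of a root of $b_h$ lies above $\beta_0/n$, so iterating the functional equation expresses every $h^{\beta_0/n-k}$ inside $\mathcal{D}_V h^{\beta_0/n}$. Being a localization along $D$ of a rank-one variation of Hodge structure on $V\setminus D$, this module underlies a MHM by Saito's theory.

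Finally, the strong Koszul property forces finiteness of $G$-orbits, so $\cM$ is regular holonomic by Hotta's theorem, and the $\dG_m\times A_D$-equivariance makes it monodromic. By the results of Brylinski, Malgrange and d'Agnolo--Eastwood, Fourier-Laplace on monodromic holonomic $\mathcal{D}$-modules is isomorphic to a Radon-type transform --- a composition of smooth pullback, proper pushforward, and open restriction --- which preserves mixed Hodge modules. Inverting this transform on the MHM identification above yields the desired MHM structure on $\cM$.

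The main obstacle I foresee is the identification step: producing the explicit Koszul-type resolution that matches $\FL(\cM)$ with the $\mathcal{D}_{\widetilde V}$-extension of $\mathcal{O}_V[h^{-1}] h^{\beta_0/n}$ requires a careful handling of the $A_D$-equivariance, the strong Koszulness of the linear logarithmic fields, the Euler field $\widetilde\chi$, and the cyclic-cover equation $\widetilde h$. The remaining steps --- the Fourier-Laplace computation, Hotta's holonomicity, and descent via the monodromic-Radon identification --- are conceptually routine consequences of the established theory.
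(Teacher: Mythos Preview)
Your overall architecture—show that $\check{\cM}=\FL(\cM)$ underlies a mixed Hodge module, then transport this to $\cM$ via a Radon-type realization of Fourier--Laplace on monodromic modules—is exactly the paper's strategy. The Fourier computation in your first paragraph and the final Radon step are both in line with the argument the paper gives (the latter is precisely the identification $\cR^\circ_c((j\circ g)_+\cO_X)\cong\FL(\check\cM)$ in the proof of Theorem~\ref{theo:TautSystMHM}).

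The genuine gap is the identification in the middle. You propose to recognize $\check{\cM}$ via the cyclic cover $\{\widetilde h=0\}\to V$ and the module $\cO_V[h^{-1}]h^{\beta_0/n}$, reading the $\beta_0$-hypothesis as the condition $\cD_V\cdot h^{\beta_0/n}=\cO_V[h^{-1}]\cdot h^{\beta_0/n}$. But that equality only requires $b_h(\beta_0/n-j)\neq 0$ for integers $j\geq 1$, i.e.\ $\beta_0\neq nr+nj$ for roots $r$ of $b_h$, which is strictly \emph{weaker} than the theorem's hypothesis $\beta_0\neq nr+k$ for all $k\geq 0$. This mismatch is a symptom that your heuristic does not capture the real obstruction. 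What must actually be shown is that multiplication by $w_0$ is invertible on the explicitly presented module $\check\cM=\cD_{\widetilde V}/(\widetilde h,\delta_1,\ldots,\delta_{n-1},\widetilde\chi-\beta_0)$; only then does $\check\cM=j_+j^+\check\cM$ agree with the direct image $(k\circ\iota)_+\cN^{\beta_0}$ of a genuine Hodge module (Proposition~\ref{prop:FLTautIsMHM}). The paper proves this $w_0$-invertibility (Theorem~\ref{theo:Invertibility}) by filtering the cokernel of $w_0$ by $\partial_{w_0}$-order: the $k$-th graded piece is a Spencer complex over $D_V$ associated with $(h,\delta_1,\ldots,\delta_{n-1},\chi-\beta_0+k)$, which is acyclic iff $b_h\!\left(\tfrac{\beta_0-k}{n}\right)\neq 0$. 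Here $k$ runs over \emph{all} nonnegative integers, not just multiples of $n$, which is exactly why the correct condition is finer than the one your $b$-function argument on $V$ produces. The step you flag as the ``main obstacle'' is indeed the entire technical content of the proof, and it is resolved by this Spencer-complex/Koszul analysis on $\widetilde V$, not by a localization statement for $h^{\beta_0/n}$ on $V$.
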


We will see later that the above presentation of the tautological system as well as the condition on the parameter $\beta_0$ is quite natural. A similar statement for GKZ-systems
has been shown in \cite[Theorem 3.5.]{Reich2}.

The proof of Theorem \ref{theo:TautSystMHM}  is based on two main observations: As in the
case of GKZ-$\cD$-modules (see \cite{Reich2}), regular singular tautological systems are obtained as Fourier-Laplace transforms of certain monodromic
$\cD$-module on the dual space, that is, the space $\dC\times V$ in our above notation. Using the Radon transformation formalism for $\cD$-modules,
it is sufficient to show that this Fourier-Laplace transform underlies a mixed Hodge module. This is done by expressing this module as a direct
image of a (twisted) structure sheaf, and the main point is to show that multiplication with the
coordinate corresponding to the first factor in $\dC\times V$ is invertible on that module (this is parallel to the main result of \cite{SchulWalth2}).
This is done in sections 2 and 3, based on the construction of Spencer complexes associated with some
 Lie-Rinehart-algebras.  These complexes can be filtered in such a way that their graded complexes are Koszul complexes, which become acyclic under our strongly Koszul hypothesis. This technique has been extensively used in \cite{calde_ens,calde_nar_compo,calde_nar_lct_ilc,nar_symmetry_BS}.

In the last section, we consider a dimensional reduction of the tautological systems defined by linear free divisors. This again is parallel to constructions
in toric mirror symmetry, where GKZ-systems are reduced to $\cD$-modules on the complexified K\"ahler moduli space
(see also \cite{BMW} for a more general framework).
As mentioned above, our motivation is to study potential Landau-Ginzburg models (i.e. regular functions on smooth affine varieties) that can occur in Hodge theoretic mirror
symmetry for non-toric varieties. We obtain these functions as hyperplane sections of the fibres of the equation of our free divisor. The dimensional reduction is done here using a direct image, in constrast to the toric case, where
it is a non-characteristic inverse image (see also the discussion of the example of a normal crossing divisor in section \ref{sec:DimReduc}, in particular formula \eqref{eq:InverseImageReduction}). This reflects the fact that the regular function occuring here are not Laurent polynomials, and there is in general no global coordinate system on the Milnor fibres of the free divisor (whereas Laurent polynomials are functions on algebraic tori).

Our reduced system is a $\cD$-module in two variables. It turns out that this system
(or rather its partial Fourier-Laplace transform) is isomorphic to a system already studied in detail in \cite{dGMS} and \cite{Sev1, Sev2}, where we have explicitly calculated the Gau\ss-Manin cohomology and related invariants (like the Hodge spectrum)
of hyperplane sections of the Milnor fibres of the divisor using a rather complicated algorithmic approach. Here the structure of the reduced $\cD$-modules can be directly obtained from the shape of the tautological system.
More precisely, we obtain the following statement.
\begin{theorem}[Compare Proposition \ref{prop:CalcGMSystemsLFD} below]
Let $D\subset V$ be a strongly Koszul reductive linear free divisor with defining equation $h$, let $h^\vee\in \cO_{V^\vee}$ be an equation for the dual divisor $D^\vee\subset V^\vee$. Let $p\in V\backslash D$ and write
$X:=h^{-1}(h(p))$ and $X^\vee:=(h^\vee)^{-1}(h(p))$. Put
$$
\begin{array}{rcl}
\Psi:X^\vee\times V & \longrightarrow & \dA^1_s\times \dA^1_t \\ \\
(f,x) & \longmapsto & (f(x),h(x)).
\end{array}
$$
Then we have the following expression for the
(partial localized Fourier-Laplace transformation of the) Gau\ss-Manin system of the family of hyperplane sections of the Milnor fibre $h^{-1}(p)$:
$$
\FL^{\textup{loc}}_{\dG_{m,t}}\cH^0\Psi_+\cO_{X^\vee\times V}(*(X^\vee\times D)) \cong
\frac{\D \cD_{\dA^1_z\times\dG_{m,t}}}{\D (z^n b_h(t\partial_t)-h(p)\cdot t,z^2\partial_z+ntz\partial_t)}.
$$
Here $\FL^{\textup{loc}}_{\dG_{m,t}}$ denotes the localized Fourier-Laplace transformation as defined in Formula \eqref{eq:FLloc} below.
\end{theorem}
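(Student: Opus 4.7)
The strategy is to identify the Gauss-Manin system $\cH^0\Psi_+\cO_{X^\vee\times V}(*(X^\vee\times D))$ as a dimensional reduction of the tautological system $\cM$ from Theorem \ref{theo:TautSystMHM}, and then apply the localized Fourier-Laplace transform. The first step is to factor $\Psi$ as the composition
$$
X^\vee\times V \xrightarrow{\;j\times\id\;} V^\vee\times V \xrightarrow{\;\text{ev}\times h\;} \dA^1_s\times \dA^1_t,\qquad \text{ev}(f,x)=f(x),
$$
where $j\colon X^\vee\hookrightarrow V^\vee$ is the closed immersion cut out by $h^\vee - h(p)$. I would first compute the pushforward $(\text{ev}\times h)_+\cO_{V^\vee\times V}(*(V^\vee\times D))$ and then use a base-change argument to pass from $V^\vee$ to $X^\vee$.

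For this pushforward I would use that direct image along the evaluation pairing $\text{ev}$ implements a partial (inverse) Fourier-Laplace transform in the $V^\vee$-variables, while the pushforward along $h$ on the $V$-factor produces the Bernstein-Sato polynomial through the functional equation $h(\underline{\partial})\cdot h^s = b_h(s)\cdot h^{s-1}$. Under this identification, the resulting $\cD$-module on $V^\vee\times \dA^1_t$ (with a Fourier-dual variable $s$ corresponding to $\lambda_0$) is recognized as a partial Fourier-Laplace transform of $\cM$: the relation $\partial_{\lambda_0}^n - h(\partial_{\underline{\lambda}})$ becomes equivalent to $s^n - h(\partial_{\underline{\lambda}})$; the logarithmic derivations $\delta_i^\vee$ survive unchanged on the $V^\vee$-factor; and the Euler operator $\widetilde{\chi}^\vee + (n+1) + \beta_0$ becomes a homogeneity relation in $s\partial_s$ and the $\lambda_i\partial_{\lambda_i}$.

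The second step is to take the inverse image $(j\times\id)^\dagger$, which restricts to $X^\vee$ and imposes $h^\vee(\underline{\lambda}) = h(p)$. Combined with the logarithmic vector fields $\delta_i^\vee$ (tangent to the fibres of $h^\vee$) and the Euler field, this dimensional reduction eliminates the remaining $V^\vee$-coordinates in favor of the single parameter $t = h^\vee(\underline{\lambda})$; the value $h(p)$ then appears on the right-hand side of the surviving Bernstein-Sato-type relation, producing the term $h(p)\cdot t$. Applying the localized Fourier-Laplace transform $\FL^{\textup{loc}}_{\dG_{m,t}}$ — under which $\partial_t$ is exchanged with the rescaled coordinate $z$ and $t\partial_t$ is the Mellin-dual variable — then converts the pair $(s^n - h(\partial_{\underline{\lambda}}),\,\widetilde{\chi}^\vee)$ into $(z^n b_h(t\partial_t) - h(p)\cdot t,\; z^2\partial_z + ntz\partial_t)$, matching the claimed presentation.

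The main technical obstacle is the first step: establishing rigorously that the direct image of the localized structure sheaf along $\text{ev}\times h$ coincides with a partial Fourier-Laplace transform of $\cM$. This requires a careful analysis of how $h(\partial_{\underline{\lambda}})$ acts on the localization $\cO_V(*D)$, and the appearance of $b_h(t\partial_t)$ relies on transporting the Bernstein-Sato functional equation through the Mellin/Fourier-Laplace correspondence in the $t$-direction. A secondary bookkeeping point is to verify that the shift $(n+1)+\beta_0$ in the Euler relation is correctly absorbed into the operator $z^2\partial_z + ntz\partial_t$ — consistent with the fact that $\beta_0$ plays no role in the reduced two-variable system, reflecting the integer-parameter ambiguity already noted in the statement of Theorem~\ref{theo:TautSystMHM}.
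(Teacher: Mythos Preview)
Your proposal has a structural incoherence that prevents it from being a proof. The factorization $\Psi = (\text{ev}\times h)\circ(j\times\id)$ is fine, but what follows does not hang together. You say you will compute $(\text{ev}\times h)_+\cO_{V^\vee\times V}(*(V^\vee\times D))$ and then pass to $X^\vee$ by ``base change''; but the sheaf you must push forward is $(j\times\id)_+\cO_{X^\vee\times V}(*(X^\vee\times D))$, which is not the structure sheaf of $V^\vee\times V$. More seriously, after applying $(\text{ev}\times h)_+$ you land on $\dA^1_s\times\dA^1_t$, so there is no remaining $V^\vee$-direction in which to ``take the inverse image $(j\times\id)^\dagger$'' or to speak of ``the resulting $\cD$-module on $V^\vee\times\dA^1_t$''. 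The target spaces in your two steps are simply incompatible.

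The paper proceeds quite differently and the comparison is instructive. It never works on $X^\vee\times V$ directly. Instead it first uses that $h$ may be defined over $\dQ$, so that in suitable coordinates $h=h^\vee$; the coordinate-swap $V\times V^\vee\to V^\vee\times V$ then carries $X\times V^\vee$ isomorphically to $X^\vee\times V$ (and $X\times D^\vee$ to $X^\vee\times D$) and intertwines $\Psi$ with $\phi\circ\varphi$, where $\varphi:X\times V^\vee\to\widetilde V^\vee$, $(x,f)\mapsto(f(x),f)$ and $\phi=(\id,h^\vee):\widetilde V^\vee\to\dA^1_{\lambda_0}\times\dA^1_t$. This reduces everything to Corollary~\ref{cor:DimReduc}, which in turn rests on two ingredients you do not supply: the four-term exact sequence of Proposition~\ref{prop:4TermSequence} relating $\cH^0\varphi_+\cO_{X\times V^\vee}$ to the tautological system $\cM$ up to free $\cO$-modules (killed by $\FL^{\textup{loc}}$), and the explicit direct-image computation of Proposition~\ref{prop:ReducTautSystem}. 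The latter is where the real dimensional reduction happens: one identifies the cokernel of the transfer-module map with the top relative de~Rham cohomology of $h^\vee$, which is rank one over $\cO_{\dA^1_t}[t^{-1}]$ because each smooth fibre of $h^\vee$ is an $A_D$-orbit with $H^{n-1}\cong\dC$. Only then does the Bernstein functional equation for $h^\vee$ (together with the symmetry of the roots of $B_{h^\vee}$ and the equality $b_h=b_{h^\vee}$) produce the operator $b_h(t\partial_t)$. Your sentence ``the logarithmic vector fields \ldots\ eliminate the remaining $V^\vee$-coordinates'' is precisely the point that requires this argument, and it cannot be waved through. Likewise, the disappearance of $\beta_0$ is not mere bookkeeping: it comes from an explicit isomorphism given by multiplication by $z^{n+\beta_0}$ on a module on which $z$ is already invertible.
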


Let us fix some notation that will be used throughout this paper. For a smooth algebraic variety over the complex numbers, we let $\cD_X$ be the
sheaf of algebraic differential operators on $X$. If $X$ is affine or $\cD$-affine, we sometimes make no distinction
between sheaves of $\cD_X$-modules and their modules of global sections.

The Fourier transformation for algebraic $\cD$-modules is used at several places
and defined as follows.
\begin{definition}\label{def:FL}
Let $Y$ be a smooth algebraic variety, $U$ be a finite-dimensional complex vector space and $U'$ its dual vector space. Denote by $\cE$ the trivial vector bundle $\tau:U\times Y\rightarrow Y$ and by $\cE'$ its dual.
Write $\text{can}:U\times U'\rightarrow\dA^1$ for the canonical morphism defined by $\text{can}(a,\varphi)=\varphi(a)$. This extends to a function $\text{can}:\cE\times\cE'\rightarrow\dA^1$. Define $\cL:=\cO_{\cE\times_{Y}\cE'}e^{-\text{can}}$, the free rank one module with differential given by the product rule. Consider also the canonical projections $p_1:\cE\times_{Y}\cE'\rightarrow\cE$, $p_2:\cE\times_{Y}\cE'\rightarrow\cE'$. The partial Fourier-Laplace transformation is then defined by
$$
\FL_{Y}:=p_{2,+}\left(p_1^+\bullet\otimes^\mathbb{L}\cL\right).
$$
\end{definition}

If the base $Y$ is a point we recover the usual Fourier-Laplace transformation and we will simply write $\FL$. Notice that although this functor is defined at the level of derived categories, it is exact, i.e., induces a functor $\FL_Y:\text{Mod}_\text{h}(\cD_\cE)\rightarrow \text{Mod}_\text{h}(\cD_{\cE'})$.

We will also need a localized version of the Fourier-Laplace transformation, defined as follows.
Suppose that $U$ is one-dimensional, with coordinate $s$.
We consider the Fourier-Laplace transformation relative  to the base $Y$ as above, and we denote the coordinate on the dual fiber $U'$ by $\tau$.  Set $z = 1 / \tau$ and denote by $j_\tau : \dG_{m, \tau} \times Y^\vee \hookrightarrow \dA^1_\tau \times Y$ and $j_z : \dG_{m, \tau} \times Y\hookrightarrow \dA^1_z \times Y = \dP^1_\tau \setminus \{\tau = 0\} \times Y$ the canonical embeddings. Let $\cN$ be an object of  $D^b(\cD_{U\times Y})$, then we put
\begin{equation}\label{eq:FLloc}
\FL^{\textup{loc}}_Y(\cN):=j_{z + }j_\tau^+ \FL_Y(\cN),
\end{equation}
notice that this functor again is exact.

\section{Lie-Rinehart algebras and Spencer complexes}

In this section we will be concerned with the following filtered rings $(R,F_\bullet)$ of differential operators:
\begin{itemize}
\item[(i)] $R =\dC[\ux][\upartial]=\dC[x_1,\dots,x_n][\partial_1,\dots,\partial_n]$ and $F_\bullet$ the usual filtration by the order of differential operators. The corresponding graded ring will be the (commutative) polynomial ring $\Gr R = \dC[\ux][\xi_1,\dots,\xi_n]$ with its usual grading:
$\dC[\ux]$ is in degree $0$ and $\xi_i = \sigma(\partial_i)$ with $\deg (\xi_i) = 1$.
\item[(ii)] $R =\dC[\ux][\upartial][s]=\dC[x_1,\dots,x_n][\partial_1,\dots,\partial_1,s]$ and $F_\bullet$ the {\em total order filtration} for which $\dC[\ux]$ is the order $0$ part and $s,\partial_1,\dots,\partial_n$ have order $1$. The corresponding graded ring will be $\Gr R = \dC[\ux][\xi_1,\dots,\xi_n,s]$ with $\dC[\ux]$ in degree $0$ and $\xi_1,\dots,\xi_n,s$ in degree $1$.
\end{itemize}
In both cases the commutative $\dC$-algebra $F_0 R$ coincides with $\dC[\ux]$ and $\dC[\ux]$ has a natural left $R$-module structure denoted by
$$ (r,f)\in R \times \dC[\ux] \longmapsto r(f) \in \dC[\ux]
$$
 (in case (ii) $s$ annihilates $\dC[\ux]$).
Moreover, any $r\in F_1 R$ can be decomposed as $r = r(1) + (r-r(1))$ and so we obtain a natural decomposition $F_1 R = (F_0 R) \oplus \left(\Gr_1 R\right)$ by identifying $r -r(1) \equiv \sigma_1(r)$.
\medskip

We have the following facts (\cite{Rinehart-1963}; see also \cite[Appendix A]{nar_symmetry_BS}):
\begin{itemize}
\item In case (i), the filtered ring $(R,F_\bullet)$ appears as the enveloping algebra of the Lie-Rinehart algebra $\Der_k(\dC[\ux]) = \bigoplus_i \left(\dC[\ux]\partial_i\right)$ over $(\dC,\dC[\ux])$ with its natural filtration.
\item In case (ii), the filtered ring $(R,F_\bullet)$ appears as the enveloping algebra of the Lie-Rinehart algebra $\left(\dC[\ux]s\right) \oplus \Der_\dC(\dC[\ux])$ over $(\dC,\dC[\ux])$ with its natural filtration. Here, the anchor map $\left(\dC[\ux]s\right) \oplus \Der_\dC(\dC[\ux]) \to \Der_\dC(\dC[\ux])$ is the projection.
\end{itemize}
Both cases are unified by the fact that $R$ appears as the enveloping algebra of the Lie-Rinehart algebra $\Gr_1 R$ over $(\dC,\dC[\ux])$.
\medskip

We will be especially interested in left $R$-modules of the form
$
\frac{R}{R \langle r_1,\dots,r_m \rangle}$
with:
\begin{itemize}
\item $r_i \in F_1 R$ for $i=1,\dots, m$ (resp. $r_1 \in F_0 R$  and $r_i \in F_1 R$ for $i=2,\dots, m$).
\item The system $\{\sigma_1(r_1), \sigma_1(r_2),\dots, \sigma_1(r_m)\}$ (resp. $\{\sigma_0(r_1) = r_1, \sigma_1(r_2),\dots, \sigma_1(r_m)\}$)
is linearly independent over $F_0 R=\dC[\ux]$.
\item The module $\bigoplus_i \left( \dC[\ux]\cdot r_i\right)$ is closed under the Lie bracket.
\end{itemize}
The above hypotheses will allow us to consider $L=\bigoplus_i \left( \dC[\ux]\cdot r_i\right)$ as a Lie-Rinehart algebra over $(\dC,\dC[\ux])$ and to take advantage of the constructions of Spencer complexes (\cite[\S 4]{Rinehart-1963}; see also \cite[(A.18)]{nar_symmetry_BS}).
\medskip

Under the above hypotheses, let us call $\U = \U(L)$ the enveloping algebra of $L$.
The {\em Cartan-Eilenberg-Chevalley-Rinehart-Spencer complex}  $\SP^\bullet_L$ associated with $L$
is defined as:
$$\textstyle \SP^{-e}_{L} = \U\otimes_{\dC[\ux]}
\bigwedge^e L ,\quad e=0,\dots, m$$ where the left $\U$-module structure comes exclusively from the left factor $\U$ of the tensor product,
the differentials
 $d^{-e}: \SP^{-e}_{L} \to \SP^{-e+1}_{L}$ are given by $ d^{{-1}}(P\otimes\lambda  ) = P\lambda$ and
\begin{eqnarray} \label{eq:diff-SP}
&\displaystyle d^{{-e}}( P\otimes(\lambda_1\wedge\cdots\wedge\lambda_e))
 =\sum_{i=1}^e
(-1)^{i-1} P\lambda_i\otimes(\lambda_1\wedge\cdots\widehat{\lambda_i}
\cdots\wedge\lambda_e)&
\end{eqnarray}
\begin{eqnarray*}
&\displaystyle + \sum_{1\leq i<j\leq e}
(-1)^{i+j}P\otimes([\lambda_i,\lambda_j]\wedge\lambda_1\wedge\cdots
\widehat{\lambda_i}\cdots\widehat{\lambda_j}
       \cdots\wedge\lambda_e), \  2\leq e\leq m,&
\end{eqnarray*}
and the augmentation  is
$ P \in \U = \SP^{0}_{L}\mapsto d^0(P):= P(1)\in \dC[\ux]$.
\medskip

Let us denote by $\overline{\SP^\bullet_L}$  the augmented complex $\SP^\bullet_L \to \dC[\ux] = \U(L)/\U(L)\langle L \rangle$.

\begin{proposition} \label{prop:res-Sp}
The complex $\SP^\bullet_L$ is a left $\U$-free resolution of $\dC[\ux]$.
\end{proposition}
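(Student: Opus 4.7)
The plan is to prove Proposition~\ref{prop:res-Sp} by the classical ``filter and reduce to Koszul'' argument for enveloping algebras of Lie-Rinehart pairs, essentially due to Rinehart. Freeness of each term $\SP^{-e}_L = \U \otimes_{\dC[\ux]} \bigwedge^e L$ as a left $\U$-module is immediate from the hypothesis $L = \bigoplus_{i=1}^m \dC[\ux]\cdot r_i$: the exterior power $\bigwedge^e L$ is then $\dC[\ux]$-free of rank $\binom{m}{e}$, whence the induced left $\U$-module is $\U$-free. All the content thus lies in the exactness of the augmented complex $\overline{\SP^\bullet_L}$.

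I would equip $\U = \U(L)$ with the natural PBW filtration, in which $F_k\U$ is spanned by products of at most $k$ elements of $L$, and filter the Spencer complex by
$$F_k \SP^{-e}_L := (F_{k-e}\U)\otimes_{\dC[\ux]} \bigwedge^e L.$$
Reading off (\ref{eq:diff-SP}), the first sum of $d^{-e}$ is right multiplication by $\lambda_i$, which sends $F_{k-e}\U$ into $F_{k-e+1}\U = F_{k-(e-1)}\U$ and therefore preserves the filtration; whereas in the bracket sum $[\lambda_i,\lambda_j]\in L \subset F_1\U$ is matched against a wedge factor in $\bigwedge^{e-1}L$, so that $P\otimes[\lambda_i,\lambda_j]\wedge\cdots \in F_{k-e}\U\otimes\bigwedge^{e-1}L = F_{k-1}\SP^{-(e-1)}_L$, strictly below the target filtration degree. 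Only the first sum therefore survives at the graded level. By Rinehart's PBW theorem, applicable because $L$ is $\dC[\ux]$-free of finite rank, $\Gr\U$ is canonically isomorphic to $\Sym_{\dC[\ux]} L \cong \dC[\ux][Y_1,\dots,Y_m]$ with $Y_i$ the class of $r_i$. Under this identification $\Gr\SP^\bullet_L$ is precisely the Koszul complex of $\dC[\ux][Y_1,\dots,Y_m]$ on the (obviously) regular sequence $Y_1,\dots,Y_m$, augmented toward $\dC[\ux] = \dC[\ux][Y_1,\dots,Y_m]/(Y_1,\dots,Y_m)$, which is a classical free resolution.

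Since the filtration on $\SP^\bullet_L$ is exhaustive and bounded below on each term ($F_k\SP^{-e}_L = 0$ for $k<e$), acyclicity at the graded level lifts to acyclicity of $\SP^\bullet_L$ via the spectral sequence of a filtered complex, or equivalently by induction comparing $F_k$ with $F_{k-1}$. The augmentation $d^0(P) = P(1)$ reduces at the graded level to the projection $\Sym_{\dC[\ux]}L \twoheadrightarrow \dC[\ux]$, so $H^0\SP^\bullet_L = \U/\U\langle L\rangle = \dC[\ux]$ and $\overline{\SP^\bullet_L}$ is acyclic.

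The main technical point I expect to have to watch is the uniform treatment of cases (i) and (ii). In case (ii) the generator $r_1$ has order $0$ and trivial anchor, so the bracket $[r_1,r_j]$ reduces to $-r_j(r_1)\in\dC[\ux]$, and the closure-under-bracket hypothesis then has to force this element to lie in $L$ itself; one must check carefully that the stated linear-independence and closure hypotheses genuinely endow $L$ with the structure of a Lie-Rinehart algebra over $(\dC,\dC[\ux])$ that is $\dC[\ux]$-free of rank $m$, so that Rinehart's PBW is applicable. Once this bookkeeping is in place, the Koszul argument applies uniformly to both cases.
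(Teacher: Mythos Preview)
Your proof is correct and takes essentially the same approach as the paper: filter by $F_i\SP^{-e}_L = (F_{i-e}\U)\otimes_{\dC[\ux]}\bigwedge^e L$, invoke PBW for the free Lie--Rinehart algebra $L$ to identify the associated graded with the augmented Koszul complex of $\Sym_{\dC[\ux]} L$, and conclude. Your final paragraph's worry about case~(ii) is unnecessary here, since Proposition~\ref{prop:res-Sp} already takes as hypothesis that $L$ is a free Lie--Rinehart algebra and the argument uses only the intrinsic PBW filtration on $\U(L)$; the $F_0$/$F_1$ distinction in the ambient ring $R$ becomes relevant only in Proposition~\ref{prop:SpencerResolution}.
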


\begin{proof} One has to use two ingredients. The first one is the PBW theorem which asserts that, $L$ being free over $\dC[\ux]$, the graded ring $\Gr \U$ coincides with the symmetric algebra of $L$ over $\dC[\ux]$. The second one consists of filtering $\overline{\SP^\bullet_L}$ with
$$\textstyle F_i \SP^{-e}_{L} := \left( F_{i-e}  \cU \right) \otimes_{\dC[\ux]} \bigwedge^e L,\quad F_i \dC[\ux]:= \dC[\ux], \quad i\geq 0,
$$
in such a way that the corresponding graded complex coincides with the augmented Koszul complex $\bigwedge^\bullet L \otimes_{\dC[\ux]} \Sym^\bullet L$, which is exact.
\end{proof}

From the inclusion $L=\bigoplus_i \left( \dC[\ux]\cdot r_i\right) \subset F_1 R$ we obtain a map of filtered rings $\U(L) \to R$. We define the {\em Spencer complex over $R$ associated with $\ur = (r_1,\dots,r_m)$}, denoted by $\SP^\bullet_{R,\ur}$,  as:
$$ \SP^\bullet_{R,\ur} := R \otimes_{\U(L)} \SP^\bullet_L.
$$
It computes the total derived tensor product $R \Lotimes_{\U(L)} \dC[\ux]$, and its $0$-cohomolgy is
$$R \otimes_{\U(L)} \dC[\ux] = \frac{R}{R \langle r_1,\dots,r_m \rangle}.$$

Now we will study some conditions on $\ur = (r_1,\dots,r_m)$ implying that the Spencer complex  $\SP^\bullet_{R,\ur}$ is a $R$-free resolution of $R/R \langle r_1,\dots,r_m \rangle$.

\begin{proposition} \label{prop:SpencerResolution}
Under the above hypotheses,
assume that the sequence of symbols $\{\sigma_1(r_i)\ |\ i=1,\dots,m\}$ (resp.
$\{\sigma_0(r_1),\sigma_1(r_2),\dots,\sigma_1(r_m)\}$)
is regular in
 $\Gr R $. Then, the Spencer complex $\SP^\bullet_{R,\ur}$ is concentrated in degree 0 and so it is a (left) $R$-free resolution of $R/R\langle r_1,\dots,r_m \rangle$.
Moreover, $\{r_1,\dots,r_m\}$ is an involutive basis of the ideal $R\langle r_1,\dots,r_m \rangle$, i.e. their symbols
  generate the ideal $\sigma\left( R\langle r_1,\dots,r_m \rangle \right)$.
\end{proposition}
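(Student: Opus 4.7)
The plan is to mimic the proof of Proposition \ref{prop:res-Sp}, but with $\U(L)$ replaced by $R$. Since $\SP^\bullet_{R,\ur} = R\otimes_{\U(L)}\SP^\bullet_L$ and $\SP^\bullet_L$ is a $\U(L)$-free resolution of $\dC[\ux]$, its $0$-th cohomology is automatically $R\otimes_{\U(L)}\dC[\ux] = R/R\langle r_1,\dots,r_m\rangle$. So the real content is the acyclicity of $\SP^\bullet_{R,\ur}$ in strictly negative degree, which I would get from a filtration-and-spectral-sequence argument analogous to the one used for $\SP^\bullet_L$.

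First I would introduce the filtration
$$F_i\SP^{-e}_{R,\ur} \;:=\; (F_{i-e}R)\otimes_{\dC[\ux]}\textstyle\bigwedge^e L,$$
(with the natural convention in case (ii), where the mixed order of the $r_i$ forces the same formula once the total order filtration is used). Because $L\subset F_1 R$ (resp.\ $r_1\in F_0R$, $r_i\in F_1R$ for $i\geq 2$) and because the bracket of two elements in $\Gr_1 R$ lies in $\Gr_1 R$ as well, both summands of the differential $d^{-e}$ in \eqref{eq:diff-SP} respect the filtration: multiplication by $\lambda_i$ raises filtration degree by at most one, while the bracket correction $[\lambda_i,\lambda_j]$ raises it by one less. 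Hence in the associated graded only the multiplicative part of $d^{-e}$ survives, and using PBW together with the freeness of $L$ over $\dC[\ux]$, one identifies
$$\Gr_\bullet \SP^\bullet_{R,\ur} \;\cong\; \Gr R \otimes_{\dC[\ux]}\textstyle\bigwedge^\bullet L \;=\; \text{Koszul complex on }(\sigma(r_1),\dots,\sigma(r_m))\text{ over }\Gr R.$$

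By the regular-sequence hypothesis this Koszul complex is exact in strictly negative degree, with $H^0 = \Gr R/(\sigma(r_1),\dots,\sigma(r_m))$. Since the filtration on $\SP^\bullet_{R,\ur}$ is exhaustive and bounded below (and each filtered piece is a finitely generated $\dC[\ux]$-module in each fixed filtration degree), the resulting spectral sequence converges and yields the vanishing of $H^{-e}(\SP^\bullet_{R,\ur})$ for $e\geq 1$, proving the first assertion. The involutive-basis statement then falls out: the convergence identifies the associated graded of the quotient filtration on $R/R\langle r_1,\dots,r_m\rangle$ with $\Gr R/(\sigma(r_1),\dots,\sigma(r_m))$, which is exactly the equality $\sigma(R\langle r_1,\dots,r_m\rangle)=(\sigma(r_1),\dots,\sigma(r_m))$ defining involutivity.

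The main technical point to check carefully is the behaviour of the differential on the graded level in case (ii): one has to verify that the fact that $r_1$ sits in $F_0R$ (so $\sigma_0(r_1)=r_1$) while the other $\sigma_1(r_i)$ sit in $\Gr_1 R$ produces the correct mixed-degree Koszul complex, and in particular that the bracket terms $[\lambda_i,\lambda_j]$ involving $r_1$ drop one filtration degree relative to the product terms. Beyond this bookkeeping the argument is formally identical to the one of Proposition \ref{prop:res-Sp}.
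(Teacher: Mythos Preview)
Your argument is correct and identical to the paper's for case (i), where all $r_i\in F_1R$: the filtration $F_i\SP^{-e}_{R,\ur}=(F_{i-e}R)\otimes\bigwedge^e L$ does yield the Koszul complex on $\sigma_1(r_1),\dots,\sigma_1(r_m)$ as associated graded, and everything goes through.

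In case (ii), however, the filtration you wrote down does \emph{not} work, and your closing paragraph correctly flags this without resolving it. If $r_1\in F_0R$, then with your filtration the term $P\lambda_1\otimes(\cdots)$ in $d^{-e}$ lands in $(F_{i-e}R)\otimes\bigwedge^{e-1}L\subset F_{i-1}\SP^{-e+1}_{R,\ur}$, i.e.\ one step \emph{below} where you need it; on the associated graded this contribution vanishes, and you get the Koszul complex on $(0,\sigma_1(r_2),\dots,\sigma_1(r_m))$, which is never acyclic. The fix the paper uses is to split $L=L_0\oplus L_1$ with $L_0=\dC[\ux]\cdot r_1$, grade $\bigwedge^e L = (L_0\otimes\bigwedge^{e-1}L_1)\oplus\bigwedge^e L_1$ with the first summand in degree $e-1$ and the second in degree $e$, and then set
\[
F_i\SP^{-e}_{R,\ur}=\bigl[(F_{i-e+1}R)\otimes_{\dC[\ux]}(L_0\otimes\textstyle\bigwedge^{e-1}L_1)\bigr]\oplus\bigl[(F_{i-e}R)\otimes_{\dC[\ux]}\textstyle\bigwedge^e L_1\bigr].
\]
One checks (using that $L_0$ is an ideal of $L$) that the differentials respect this filtration, and now the graded complex is genuinely the Koszul complex on $\sigma_0(r_1),\sigma_1(r_2),\dots,\sigma_1(r_m)$. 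Your phrase ``mixed-degree Koszul complex'' is the right intuition, but producing it requires this asymmetric filtration, not the uniform one.

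Your argument for involutivity via the identification $\Gr(R/R\langle r_1,\dots,r_m\rangle)\cong\Gr R/(\sigma(r_1),\dots,\sigma(r_m))$ is fine and is in fact more explicit than the paper, which simply cites \cite[Prop.~4.1.2]{calde_ens}.
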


\begin{proof} Let us prove the proposition in the case where $r_1\in F_0 R$, $r_i \in F_1 R$ for $i=2,\dots, m$ and where $\{\sigma_0(r_1),\sigma_1(r_2),\dots,\sigma_1(r_m)\}$ is a regular sequence in
$\Gr R $.
\medskip

We have $L = L_0 \oplus L_1$ with
$L_0= \left( \dC[\ux]\cdot r_1 \right)$ and $L_1=\bigoplus_{i=2}^m \left( \dC[\ux]\cdot r_i\right)$.
Observe that $L_0$ is an ideal of the Lie-Rinehart algebra $L$.
\medskip

As in the proof of Proposition \ref{prop:res-Sp} and \cite[Th. 5.9]{calde_nar_compo}, we are going to filter the complex $\SP^\bullet_{R,\ur}$ in such a way that the graded complex coincides with the Koszul complex of
$$\sigma_0(r_1),\sigma_1(r_2),\dots,\sigma_1(r_m) \in \Gr R  =\Sym_{\dC[\ux]} \Gr_1 R.$$
Instead of declaring $\bigwedge^e L$ to be of order $e$, we now have to use the decomposition $L = L_0 \oplus L_1$, with $L_0$ of order $0$ and $L_1$ of order $1$. Namely, we consider the grading
$ \bigwedge^e L = \left(\bigwedge^e L\right)_{e-1} \bigoplus \left(\bigwedge^e L\right)_e$ with
$$\textstyle \left(\bigwedge^e L\right)_{e-1} =
L_0 \otimes_{\dC[\ux]} \left( \bigwedge^{e-1} L_1 \right),\quad
\left(\bigwedge^e L\right)_e =
 \bigwedge^e L_1,
$$
and the filtration
\begin{eqnarray*}
&
  F_i \SP^{-e}_{R,\ur} = F_i \left( R \otimes_{\dC[\ux]}  \bigwedge^e L \right) :=
\left[\left( F_{i-e+1} R \right) \otimes_{\dC[\ux]} \left(\bigwedge^e L\right)_{e-1} \right] \bigoplus
\left[\left( F_{i-e} R \right) \otimes_{\dC[\ux]} \left(\bigwedge^e L\right)_e \right],&
\end{eqnarray*}
which is easily seen to be compatible with the differentials. The corresponding graded complex is isomorphic to the Koszul complex over $\Gr R \simeq \Sym \Gr_1 R$ associated with the $\dC[\ux]$-linear map
$$ \sigma(L) := \sigma_0(L_0) \oplus \sigma_1(L_1) := \left( \dC[\ux]\cdot \sigma_0(r_1) \right) \oplus \left(\oplus_{i=2}^m \left( \dC[\ux]\cdot \sigma_1(r_i)\right) \right) \hookrightarrow \Gr R,
$$
i.e. the Koszul complex  over $\Gr R$ associated with the sequence $\{\sigma_0(r_1),\sigma_1(r_2),\dots,\sigma_1(r_m)\}$, which by the hypotheses is acyclic in degree $\neq 0$, and we conclude that
$\SP^\bullet_{R,\ur}$ is also acyclic in degree $\neq 0$.
\medskip

To prove the involutivity of $\{r_1,\dots,r_m\}$ one proceeds as in \cite[Prop. 4.1.2]{calde_ens}.
\medskip

The case where $r_i \in F_1 R$ for $i=1,\dots, m$ and $\{\sigma_1(r_1),\sigma_1(r_2),\dots,\sigma_1(r_n)\}$ is a regular sequence in $\Gr R $ is easier and can be proven in a similar way by considering the filtration
\begin{eqnarray*}
&
F_i \SP^{-e}_{R,\ur} :=
\left( F_{i-e} R \right) \otimes_{\dC[\ux]} \bigwedge^e L,&
\end{eqnarray*}
and checking that the corresponding graded complex is isomorphic the Koszul complex  over $\Gr R$ associated with the sequence $\{\sigma_1(r_1),\sigma_1(r_2),\dots,\sigma_1(r_m)\}$.
\end{proof}

\section{Free divisors, the strong Koszul hypotheses and the Bernstein module}
\label{sec:FreeDiv}

From now on, we will write $V=\dC^n$ and
$\widetilde{V}=\dC\times V$. We let
$(w_1,\ldots,w_n)$ be coordinates on $V$, and
$(w_0,w_1,\ldots,w_n)$ coordinates on $\widetilde{V}$. We will write
$$
 A_V:=\dC[w_1,\dots,w_n],\  D_V =  A_V\langle \partial_{w_1},\dots,\partial_{w_n}\rangle.
$$
 We assume that $h\in A_V$ is a reduced quasi-homogeneous polynomial with weights $(p_1,\dots,p_n)$ of degree $d$ and that $D=\{h=0\}\subset \dA^n$ is a free divisor in the sense of \cite{KS1}, that is, that the module $\Der_V(-\log D)$ is free over $\cO_V$. Let $\delta_1,\dots,\delta_{n-1},\delta_n=\chi=\sum_{i=1}^n p_i w_i \partial_{w_i}$ be a basis of $\Der(-\log D) \subset \Der_\dC( A_V)$, chosen in such a way that $\delta_i(h)=0$ for $i=1,\dots,n-1$.
\medskip

Notice that the ring of logarithmic differential operators $A_V[\delta_1,\dots,\delta_n] \subset D_V$
is actually equal to the enveloping algebra $\U(\Der(-\log D))$ \cite[Prop. 2.2.5]{calde_ens}.
We assume for the moment the following {\em strongly Koszul} hypothesis
(\cite[Def. 7.1]{granger_schulze_rims_2010}, \cite[Cor. (1.12)]{nar_symmetry_BS}):
\begin{itemize}
\item[(SK)] The symbols with respect to the usual order filtration in $ D_V$ of $h,\delta_1,\dots,\delta_{n-1}$ form a regular sequence in $\Gr  D_V$, or equivalently, the symbols with respect to the total order filtration in $ D_V[s]$ of $h,\delta_1,\dots,\delta_{n-1},\chi-ds$ form a regular sequence in $\Gr  D_V[s]$.
\end{itemize}

Hypothesis (SK) makes sense not only for polynomial quasi-homogeneous free divisors as above, but also for free divisors on any complex manifold.
Examples of free divisors satisfying (SK) are those which are locally quasi-homogeneous (\cite[Th. 5.9]{calde_nar_compo}), for instance normal crossing divisors, free hyperplane arrangements, or the
discriminant of stable maps in Mather's ``nice dimensions''. Later we will be concerned with the more special class of so-called \emph{linear free divisors} (see Definition \ref{def:LFD} below). These are discriminants in prehomogenous vector spaces, and then the (SK) condition is equivalent to a finite orbit type assumption for a natural group action.
\medskip

Hypothesis (SK) implies the following properties (\cite[Criterion 4.1]{cas_ucha_exper},
\cite[Th. 1.24]{calde_nar_lct_ilc}, \cite[\S 4]{nar_symmetry_BS}):

\begin{enumerate}
\item[(a)] The natural map
$$ D_V[s] \Lotimes_{A_V[\delta_1,\dots,\delta_n][s]}   A_V[s]h^s \to  D_V[s] h^s
$$ is an isomorphism, or equivalently:
\begin{enumerate}
\item[(a-1)] The annihilator of $h^s$ over $ D_V[s]$ is generated by $\delta_1,\dots,\delta_{n-1},\chi-ds$; and
\item[(a-2)] The Spencer complex over $ D_V[s]$ associated with $(\delta_1,\dots,\delta_{n-1},\chi-ds)$ is exact in degrees $\neq 0$, i.e.  it is a $ D_V[s]$-free resolution of
$$ D_V[s]/ D_V[s]\langle \delta_1,\dots,\delta_{n-1},\chi-ds \rangle.$$
\end{enumerate}
\item[(b)] The natural map
$$ D_V[s] \Lotimes_{A_V[\delta_1,\dots,\delta_n][s]}  \frac{ A_V[s]h^s}{ A_V[s] h^{s+1}} \to \frac{ D_V[s] h^s}{ D_V[s] h^{s+1}}
$$
is an isomorphism, or equivalently:
 \begin{enumerate}
\item[(b-1)] The annihilator of the class of $h^s$ over $ D_V[s]$ is generated by $h,\delta_1,\dots,\delta_{n-1},\chi-ds$; and
\item[(b-2)]The Spencer complex over $ D_V[s]$ associated with $(h,\delta_1,\dots,\delta_{n-1},\chi-ds)$  is exact in degrees $\neq 0$, i.e. it is a $ D_V[s]$-free resolution of
$$ D_V[s]/ D_V[s]\langle h,\delta_1,\dots,\delta_{n-1},\chi-ds \rangle.$$
\end{enumerate}
This property implies that the $b$-function $b_h(s)$ of $h$ satisfies the symmetry: $b_h(-s-2)=\pm b_h(s)$.
\item[(c)] The Logarithmic Comparison Theorem holds, or equivalently in terms of $ D_V$-module theory, the natural map
$$  D_V \Lotimes_{A_V[\delta_1,\dots,\delta_n]}   A_V(D) \to  A_V[\star D]
$$
is an isomorphism. This property is equivalent to the following facts:
\begin{enumerate}
\item[(c-1)] The $ D_V$-module of meromorphic functions $ A_V[\star D]$ is generated by $h^{-1}$ (this is a consequence of the fact that $b_h(s)$ has no integer roots $< -1$); and
\item[(c-2)]  The Spencer complex over $ D_V$ associated with $(\delta_1,\dots,\delta_{n-1},\chi+d)$ is exact in degrees $\neq 0$.
\end{enumerate}
\end{enumerate}

As a consequence of (b-1), the $b$-function $b_h(s)$ belongs to $ D_V[s]\langle h,\delta_1,\dots,\delta_{n-1},\chi-ds \rangle$. Actually, it is the generator of $\dC[s]\cap D_V[s]\langle h,\delta_1,\dots,\delta_{n-1},\chi-ds \rangle$.
\medskip

Let us consider now a new variable $w_0$ and the rings
$$\tOO= A_V[w_0]=\dC[w_0,w_1,\dots,w_n],\quad \tDD= D_V[w_0]\langle\partial_{w_0}\rangle=\tOO\langle\partial_{w_0},\partial_{w_1},\dots,\partial_{w_n}]\rangle.
$$
Let us consider $\tth = h-cw_0^d$, $\tchi = \chi + w_0 \partial_{w_0}$ with $c\in \dC\setminus\{0\}$. We are interested in the ideals $I(\beta)=\tDD\langle \tth, \delta_1,\dots,\delta_{n-1},\tchi-d\beta\rangle$ for some complex parameter $\beta$ and the $\tDD$-module $N(\beta)=\tDD/I(\beta)$.
\medskip

Let also consider the ring $\tDD[s]$ endowed with the total order filtration, the ideal
$$ I(s) = \tDD[s]\langle \tth, \delta_1,\dots,\delta_{n-1},\tchi-ds\rangle \subset \tDD[s].
$$
and the $ D_V[s]$-module $N(s)=\tDD[s]/I(s)$.

Let us denote by $\widetilde{\SP^\bullet}(\beta)$ the Spencer complex over $\tDD$ associated with $(\tth, \delta_1,\dots,\delta_{n-1},\tchi-d\beta)$, and let $\widetilde{\SP^\bullet}(s)$ be the Spencer complex over $\tDD[s]$ associated with $(\tth, \delta_1,\dots,\delta_{n-1},\tchi-ds)$.

\begin{proposition} \label{prop:free-res-cM(s)}
The complex $\widetilde{\SP^\bullet}(s)$ is concentrated in degree $0$ and so it is a free resolution of $N(s)$. Moreover $\tth, \delta_1,\dots,\delta_{n-1},\tchi-ds$ is an involutive basis of $I(s)$.
\end{proposition}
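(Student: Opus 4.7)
The plan is to derive this from Proposition~\ref{prop:SpencerResolution} applied to the filtered ring $R = \tDD[s]$ with its total order filtration (case (ii) of Section~2) and to the ordered sequence
$$
\ur = (\tth,\delta_1,\ldots,\delta_{n-1},\tchi-ds),
$$
where $\tth \in F_0 R = \tOO$ and the remaining entries lie in $F_1 R$. Three hypotheses must be checked: $\tOO$-linear independence of the symbols, closure of $L := \tOO\cdot\tth \oplus \bigoplus_{i<n}\tOO\cdot\delta_i \oplus \tOO\cdot(\tchi-ds)$ under the Lie bracket, and regularity of the symbol sequence in $\Gr R$.

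For the Lie-bracket closure I would use that each $\delta_i$ involves only $w_1,\ldots,w_n$ and annihilates $h$. This gives $[\tth,\delta_i] = -\delta_i(\tth) = 0$, and $\tchi(\tth) = (\chi+w_0\partial_{w_0})(h-cw_0^d) = dh - dcw_0^d = d\tth$, so $[\tth,\tchi-ds] = -d\tth \in \tOO\cdot\tth$. The brackets $[\delta_i,\delta_j]$ and $[\delta_i,\chi]$ annihilate $h$ by direct Jacobi-type computations, hence lie in $\Der(-\log h)$; writing them in the basis $\delta_1,\ldots,\delta_{n-1},\chi$ of $\Der(-\log D)$, the $\chi$-coefficient must vanish since $\chi(h)=dh\neq 0$, so these brackets lie in $\bigoplus_{k<n} A_V\delta_k$. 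Combined with $[\delta_i,w_0\partial_{w_0}]=0$ and $[\delta_i,s]=0$, this covers every bracket needed.

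The main step is to verify regularity in $\Gr R = \dC[w_0,\ldots,w_n,\xi_0,\ldots,\xi_n,s]$ of the symbol sequence
$$
\bigl(h-cw_0^d,\;\sigma_1(\delta_1),\ldots,\sigma_1(\delta_{n-1}),\;\sigma_1(\chi)+w_0\xi_0-ds\bigr).
$$
Here I will perform the linear change of variable $s' := s - w_0\xi_0/d$, which preserves the ring as a polynomial algebra and converts the last element into $\sigma_1(\chi)-ds'$ while leaving the others unchanged. By hypothesis (SK) in its second formulation, $(h,\sigma_1(\delta_1),\ldots,\sigma_1(\delta_{n-1}),\sigma_1(\chi)-ds')$ is regular in $\Gr D_V[s']$, so it generates an ideal of height $n+1$. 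Krull's height theorem then forces $(\sigma_1(\delta_1),\ldots,\sigma_1(\delta_{n-1}),\sigma_1(\chi)-ds')$ to have height exactly $n$, hence to be a regular sequence in the Cohen--Macaulay ring $\Gr D_V[s']$; regularity is preserved under the extension of scalars $\Gr R \cong \Gr D_V[s'][w_0,\xi_0]$. In the resulting quotient $B[w_0,\xi_0]$, the element $h-cw_0^d$ has leading coefficient $-c\neq 0$ in $w_0$, so it is a non-zero-divisor, and permutability of regular sequences of the correct height in the polynomial ring $\Gr R$ moves $h-cw_0^d$ to the head of the sequence.

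With this regularity in hand, the $\tOO$-linear independence of the symbols is automatic, and Proposition~\ref{prop:SpencerResolution} yields both the acyclicity of $\widetilde{\SP^\bullet}(s)$ in degrees $\neq 0$ and the involutivity of $\ur$ as a basis of $I(s)$. The only delicate point I expect is the regularity step, where the change of variable $s' = s - w_0\xi_0/d$ is what makes the reduction to (SK) transparent; everything else is routine bookkeeping on the definitions.
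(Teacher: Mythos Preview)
Your overall plan matches the paper's: apply Proposition~\ref{prop:SpencerResolution} with $R=\tDD[s]$, verify the Lie--Rinehart hypotheses, and establish regularity of the symbol sequence.  Your verification of the bracket closure is careful and correct, and is left implicit in the paper.

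The regularity argument, however, takes a genuinely different route.  The paper keeps the sequence in the original variables and successively enlarges it: starting from (SK) it appends $w_0$, uses the equality of ideals $\langle h,\ldots,w_0\rangle=\langle \tth,\ldots,w_0\rangle$ to trade $h$ for $\tth$, then appends $\xi_0$ and uses $\langle\ldots,\sigma(\chi)-ds,w_0,\xi_0\rangle=\langle\ldots,\sigma(\tchi)-ds,w_0,\xi_0\rangle$ to insert the cross term, finally dropping the two auxiliary elements as an initial-segment truncation.  Your substitution $s'=s-w_0\xi_0/d$ absorbs the cross term in one stroke and reduces the problem directly to (SK) plus the observation that $h-cw_0^d$ is monic in $w_0$; this is an elegant shortcut.

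One caution: the two intermediate claims ``Krull's height theorem forces the shorter sequence to have height exactly $n$'' and ``permutability of regular sequences of the correct height in the polynomial ring'' are not valid for arbitrary elements of a polynomial ring over a field.  For instance, $(y(1-x),\,z(1-x),\,x)$ generates the height-$3$ ideal $(x,y,z)$ in $k[x,y,z]$ yet is not a regular sequence, and the sub-ideal $(y(1-x),z(1-x))$ has height $1$, not $2$.  What rescues your argument here is that all symbols in question can be taken homogeneous for a suitable \emph{positive} grading on $\Gr R$ (give $w_i$ weight $p_i$, $w_0$ weight $1$, $\xi_i$ weight $C-p_i$, $s$ weight $C$ for $C$ large, with the $\delta_i$ chosen quasi-homogeneous); in a positively graded polynomial ring with degree-zero part $\dC$, regularity of a homogeneous sequence depends only on the ideal it generates.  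The paper's ``same ideal'' manipulations implicitly rely on the same homogeneity, so your proof is at the same level of rigor, but you should state explicitly why permutability is available rather than appealing to Cohen--Macaulayness alone.
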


\begin{proof}
We are going to use Proposition \ref{prop:SpencerResolution} for the case $R=\tDD[s]$ together with the
total order filtration (for which $\partial_{w_0}, \ldots,\partial_{w_n}$ as well as $s$ have degree $1$).
Notice that the symbols of the generators of $I(s)$ with respect to that filtration are:
$$ \tth = h - c w_0^d, \sigma(\delta_1),\dots,\sigma(\delta_{n-1}),\sigma(\chi)-ds+w_0\xi_0.
$$
We have to show that they form a regular sequence in $\Gr \tDD[s] =\tOO[s,\xi_0,\dots,\xi_n]$.
We already know by the (SK) assumption that $h, \sigma(\delta_1),\dots,\sigma(\delta_{n-1}),\sigma(\chi)-ds$ is a regular sequence in $ A_V[s,\xi_1,\dots,\xi_n]$.

To show that $ \tth = h - c w_0^d, \sigma(\delta_1),\dots,\sigma(\delta_{n-1}),\sigma(\chi)-ds+w_0\xi_0$ is a regular sequence
in $\tOO[s,\xi_0,\ldots,\xi_n]$, we first notice that
$h, \sigma(\delta_1),\dots,\sigma(\delta_{n-1}),\sigma(\chi)-ds,w_0$
 is a regular sequence in $ A_V[s,\xi_1,\dots,\xi_n][w_0]=\tOO[s,\xi_1,\dots,\xi_n]$. Since
$$ \langle h, \sigma(\delta_1),\dots,\sigma(\delta_{n-1}),\sigma(\chi)-ds,w_0\rangle = \langle h - c w_0^d, \sigma(\delta_1),\dots,\sigma(\delta_{n-1}),\sigma(\chi)-ds,w_0\rangle
$$
we deduce that $\tth, \sigma(\delta_1),\dots,\sigma(\delta_{n-1}),\sigma(\chi)-ds,w_0$ is a regular sequence.
On the other hand,
$$\tth, \sigma(\delta_1),\dots,\sigma(\delta_{n-1}),\sigma(\chi)-ds,w_0,\xi_0$$
is again a regular sequence in $\tOO[s,\xi_1,\dots,\xi_n][\xi_0]$, and in a similar way we deduce
that
$$ \tth, \sigma(\delta_1),\dots,\sigma(\delta_{n-1}),\sigma(\chi)-ds+w_0\xi_0,w_0,\xi_0
$$
is a regular sequence. We conclude that
$ \tth, \sigma(\delta_1),\dots,\sigma(\delta_{n-1}),\sigma(\chi)-ds+w_0\xi_0
$
is a regular sequence.
\end{proof}

\begin{proposition} For any $\beta \in \dC$, the multiplication
$ (s-\beta): N(s) \to N(s)$
is injective.
\end{proposition}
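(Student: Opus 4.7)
The plan is to derive the injectivity from the long exact cohomology sequence associated with a short exact sequence of Spencer complexes. Each component of $\widetilde{\SP^\bullet}(s)$ is a free $\tDD[s]$-module, hence $\dC[s]$-flat, so multiplication by $s-\beta$ yields a short exact sequence of complexes
$$
0 \longrightarrow \widetilde{\SP^\bullet}(s) \xrightarrow{\ s-\beta\ } \widetilde{\SP^\bullet}(s) \longrightarrow \widetilde{\SP^\bullet}(s)/(s-\beta) \longrightarrow 0.
$$
The quotient is naturally identified with the Spencer complex $\widetilde{\SP^\bullet}(\beta)$ (all the relevant Chevalley--Rinehart brackets, such as $[\tth,\tchi-ds]=-d\tth$, are independent of $s$ and specialize correctly). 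Since Proposition~\ref{prop:free-res-cM(s)} tells us $\widetilde{\SP^\bullet}(s)$ is concentrated in degree $0$ with cohomology $N(s)$, the long exact sequence reads
$$
H^{-1}(\widetilde{\SP^\bullet}(\beta))\longrightarrow N(s)\xrightarrow{\ s-\beta\ } N(s) \longrightarrow H^0(\widetilde{\SP^\bullet}(\beta))\longrightarrow 0,
$$
reducing the statement to $H^{-1}(\widetilde{\SP^\bullet}(\beta))=0$. I will in fact establish that $\widetilde{\SP^\bullet}(\beta)$ is a free $\tDD$-resolution of $N(\beta)$ by applying Proposition~\ref{prop:SpencerResolution} to $R=\tDD$ with the usual order filtration, taking $r_1=\tth\in F_0$ and $r_2,\ldots,r_{n+1}=\delta_1,\ldots,\delta_{n-1},\tchi-d\beta\in F_1$. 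Since $d\beta$ is a scalar, $\sigma_1(\tchi-d\beta)=\sigma(\chi)+w_0\xi_0$ is independent of $\beta$, so everything reduces to proving that
$$
\tth,\ \sigma(\delta_1),\ldots,\sigma(\delta_{n-1}),\ \sigma(\chi)+w_0\xi_0
$$
is a regular sequence in $\Gr \tDD=\tOO[\xi_0,\ldots,\xi_n]$.

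To prove this regularity I would mimic the argument of Proposition~\ref{prop:free-res-cM(s)} but without the auxiliary variable $s$. By (SK), $h,\sigma(\delta_1),\ldots,\sigma(\delta_{n-1})$ is regular in $A_V[\xi_1,\ldots,\xi_n]$; since $\tOO[\xi_1,\ldots,\xi_n]/(\tth) = \tOO[\xi_1,\ldots,\xi_n]/(h-cw_0^d)$ is free of rank $d$ over $A_V[\xi_1,\ldots,\xi_n]$ with basis $1,w_0,\ldots,w_0^{d-1}$, flat base change shows that $\tth,\sigma(\delta_1),\ldots,\sigma(\delta_{n-1})$ is regular in $\tOO[\xi_1,\ldots,\xi_n]$ and a fortiori in $\tOO[\xi_0,\ldots,\xi_n]$. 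Set
$$
Q_1 := \tOO[\xi_1,\ldots,\xi_n]/(\tth,\sigma(\delta_1),\ldots,\sigma(\delta_{n-1})),
$$
which is then Cohen--Macaulay of dimension $n+1$. On the other hand, $Q_1/(w_0) = A_V[\xi_1,\ldots,\xi_n]/(h,\sigma(\delta_1),\ldots,\sigma(\delta_{n-1}))$ has dimension $n$, so the dimension drop forces $w_0$ to be a non-zero-divisor on $Q_1$. Since $\tOO[\xi_0,\ldots,\xi_n]/(\tth,\sigma(\delta_i))=Q_1[\xi_0]$ and $\sigma(\chi)+w_0\xi_0$ is a degree-$1$ polynomial in $\xi_0$ over $Q_1$ with leading coefficient $w_0$, it is a non-zero-divisor on $Q_1[\xi_0]$, completing the regularity argument.

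The main subtlety is the Cohen--Macaulay/dimension-drop step establishing that $w_0$ is a non-zero-divisor on $Q_1$: in Proposition~\ref{prop:free-res-cM(s)} the coefficient of $s$ in $\sigma(\chi)-ds+w_0\xi_0$ is the unit $-d$, which trivially makes that element a non-zero-divisor on any polynomial extension; after specializing at $s=\beta$ this trivial device is no longer available, and the Cohen--Macaulay argument provides the cleanest substitute.
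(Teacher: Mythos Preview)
Your proof is correct but follows a different route from the paper. The paper argues directly at the level of $\tDD[s]$: since Proposition~\ref{prop:free-res-cM(s)} shows that $\tth,\delta_1,\dots,\delta_{n-1},\tchi-ds$ is an involutive basis of $I(s)$, one has $\Gr N(s)=\Gr\tDD[s]/(\tth,\sigma(\delta_i),\sigma(\chi)-ds+w_0\xi_0)$, and it suffices to show that $\sigma(s-\beta)=s$ is a non-zero-divisor on this quotient, i.e.\ that $s,\tth,\sigma(\delta_1),\dots,\sigma(\delta_{n-1}),\sigma(\chi)-ds+w_0\xi_0$ is regular in $\Gr\tDD[s]$; this is established by successive ``add a variable, absorb it into the sequence'' steps in the style of Proposition~\ref{prop:free-res-cM(s)}. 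You instead prove first that $\widetilde{\SP^\bullet}(\beta)$ is a resolution (via Proposition~\ref{prop:SpencerResolution} for $R=\tDD$), and then read off injectivity of $s-\beta$ from the long exact sequence. This reverses the paper's logical order: the paper deduces the resolution property of $\widetilde{\SP^\bullet}(\beta)$ (Corollary~\ref{cor:free-res-cM(beta)}) \emph{from} the injectivity of $s-\beta$, whereas you get Corollary~\ref{cor:free-res-cM(beta)} as a byproduct on the way to the proposition.

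Your regularity argument for $\tth,\sigma(\delta_1),\dots,\sigma(\delta_{n-1}),\sigma(\chi)+w_0\xi_0$ in $\Gr\tDD$ via flat base change and the Cohen--Macaulay dimension-drop for $w_0$ is a clean alternative to the paper's method. The paper does need precisely this regularity statement later (for the holonomicity proposition), but obtains it by the same add-a-variable manipulations as in the previous proofs rather than by your Cohen--Macaulay argument. Your observation that the unit coefficient $-d$ of $s$ in $\sigma(\chi)-ds+w_0\xi_0$ is what makes the $\tDD[s]$ argument easy, while its absence after specialization forces a genuinely new idea, nicely isolates the extra content.
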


\begin{proof}
Since the generators of $I(s)$ form an involutive basis and $\sigma(s-\beta)=s$, it is enough to check that the following sequence
$$s,\tth, \sigma(\delta_1),\dots,\sigma(\delta_{n-1}),\sigma(\chi)-ds+w_0\xi_0$$
is regular in $\Gr \tDD[s] = \tOO[s,\xi_0,\dots,\xi_n]$ .
\medskip

We know that $\sigma(\delta_1),\dots,\sigma(\delta_{n-1}),\sigma(\chi)$ is a regular sequence in $\Gr  D_V = A_V[\xi_1,\dots,\xi_n]$  (this is the Koszul property). So, $\sigma(\delta_1),\dots,\sigma(\delta_{n-1}),\sigma(\chi),s$ is a regular sequence in $\Gr  D_V[s] = A_V[\xi_1,\dots,\xi_n,s]$.
\medskip

Let us prove that
$h-c w_0^d,\sigma(\delta_1),\dots,\sigma(\delta_{n-1}),\sigma(\chi),s$ is a regular sequence in $ A_V[w_0][\xi_1,\dots,\xi_n,s]$. We filter by the degree in $w_0$ and since
$w_0^d,\sigma(\delta_1),\dots,\sigma(\delta_{n-1}),\sigma(\chi),s$ is a regular sequence, we are done.
Now, we add the new variable $\xi_0$ and we know that
$$ h - c w_0^d,\sigma(\delta_1),\dots,\sigma(\delta_{n-1}),\sigma(\chi),s,\xi_0
$$
is a regular sequence in $ A_V[w_0][\xi_0,\xi_1,\dots,\xi_n,s]$.
We repeat the procedure in the proof of Proposition \ref{prop:free-res-cM(s)} and we deduce first that
$$ h - c w_0^d,\sigma(\delta_1),\dots,\sigma(\delta_{n-1}),\sigma(\chi)+w_0 \xi_0-ds,s,\xi_0
$$
is a regular sequence, and second that
$$ h - c w_0^d,\sigma(\delta_1),\dots,\sigma(\delta_{n-1}),\sigma(\chi)+w_0 \xi_0-ds,s
$$
is a regular sequence.
\end{proof}

\begin{corollary} \label{cor:free-res-cM(beta)}
For any $\beta \in \dC$, the Spencer complex over $\tDD$ associated with
$(\tth, \delta_1,\dots,\delta_{n-1},\tchi-d\beta)$ is a free resolution of $N(\beta)$.
\end{corollary}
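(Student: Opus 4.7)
The plan is to deduce this corollary by specializing the parameter $s$ to the value $\beta$ in the Spencer complex from Proposition \ref{prop:free-res-cM(s)} and then using the injectivity of $s-\beta$ on $N(s)$ established in the preceding proposition to rule out any higher Tor-type obstruction. More concretely, the strategy is to exhibit a short exact sequence of $\tDD$-complexes relating $\widetilde{\SP^\bullet}(s)$ to $\widetilde{\SP^\bullet}(\beta)$ and then to read off the cohomology from the resulting long exact sequence.

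First I would identify $\widetilde{\SP^\bullet}(\beta)$ with the quotient complex $\widetilde{\SP^\bullet}(s)/(s-\beta)\,\widetilde{\SP^\bullet}(s)$. Term by term, $\widetilde{\SP^{-e}}(s) = \tDD[s] \otimes_{A_V[s]} \bigwedge^e L_s$ where $L_s$ is the free $A_V[s]$-module on the generators $\tth, \delta_1,\dots,\delta_{n-1},\tchi-ds$; specializing $s \mapsto \beta$ sends this to $\tDD \otimes_{A_V} \bigwedge^e L_\beta = \widetilde{\SP^{-e}}(\beta)$, and the Spencer differentials, which are expressed purely in terms of left $\tDD[s]$-multiplication and Lie brackets of the generators (both operations compatible with the specialization), descend correctly. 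Since each $\widetilde{\SP^{-e}}(s)$ is free over $\tDD[s]$, multiplication by $s-\beta$ is termwise injective, yielding the short exact sequence of $\tDD$-complexes
\begin{equation*}
0 \longrightarrow \widetilde{\SP^\bullet}(s) \xrightarrow{\ s-\beta\ } \widetilde{\SP^\bullet}(s) \longrightarrow \widetilde{\SP^\bullet}(\beta) \longrightarrow 0.
\end{equation*}

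By Proposition \ref{prop:free-res-cM(s)}, the two outer complexes have cohomology concentrated in degree $0$ equal to $N(s)$. The associated long exact sequence then immediately gives $H^i(\widetilde{\SP^\bullet}(\beta)) = 0$ for $i < -1$ together with
\begin{equation*}
0 \longrightarrow H^{-1}(\widetilde{\SP^\bullet}(\beta)) \longrightarrow N(s) \xrightarrow{\ s-\beta\ } N(s) \longrightarrow H^0(\widetilde{\SP^\bullet}(\beta)) \longrightarrow 0,
\end{equation*}
and the preceding injectivity proposition forces $H^{-1}(\widetilde{\SP^\bullet}(\beta))=0$. Hence $\widetilde{\SP^\bullet}(\beta)$ is a $\tDD$-free resolution of $N(s)/(s-\beta)N(s)$, which is canonically isomorphic to $N(\beta) = \tDD/I(\beta)$ since the specialization $s \mapsto \beta$ sends $I(s) + (s-\beta)\tDD[s]$ onto $I(\beta)$. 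The only non-mechanical point in the argument is the termwise identification of $\widetilde{\SP^\bullet}(s)/(s-\beta)\widetilde{\SP^\bullet}(s)$ with $\widetilde{\SP^\bullet}(\beta)$ as complexes (not merely as graded modules); once this compatibility of differentials with specialization is observed, the rest is a routine application of the snake/long-exact-sequence machinery.
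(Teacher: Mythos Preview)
Your proof is correct and is essentially the same argument as the paper's, just unpacked: the paper writes the chain of identifications
\[
N(\beta)=\frac{\tDD[s]}{\tDD[s]\langle s-\beta\rangle}\otimes_{\tDD[s]} N(s)
=\frac{\tDD[s]}{\tDD[s]\langle s-\beta\rangle}\Lotimes_{\tDD[s]} N(s)
=\frac{\tDD[s]}{\tDD[s]\langle s-\beta\rangle}\otimes_{\tDD[s]}\widetilde{\SP^\bullet}(s)
=\widetilde{\SP^\bullet}(\beta),
\]
where the passage from $\otimes$ to $\Lotimes$ is exactly the injectivity of $s-\beta$ on $N(s)$, and the last equality is your termwise identification. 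Your short exact sequence of complexes and the ensuing long exact sequence are simply the explicit computation of that derived tensor product via the two-term free resolution $0\to\tDD[s]\xrightarrow{s-\beta}\tDD[s]\to 0$ of $\tDD[s]/\langle s-\beta\rangle$.
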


\begin{proof} We proceed as in the proof of \cite[Cor. (4.5)]{nar_symmetry_BS}:
\begin{eqnarray*}
& \displaystyle N(\beta) = \frac{\tDD[s]}{\tDD[s]\langle s-\beta\rangle} \otimes_{\tDD[s]} N(s) = \frac{\tDD[s]}{\tDD[s]\langle s-\beta\rangle} \Lotimes_{\tDD[s]} N(s) = &\\
& \displaystyle \frac{\tDD[s]}{\tDD[s]\langle s-\beta\rangle} \otimes_{\tDD[s]} \widetilde{\SP^\bullet}(s) = \widetilde{\SP^\bullet}(\beta).
\end{eqnarray*}
\end{proof}

\begin{proposition}
For any complex parameter $\beta\in\dC$, the $\tDD$-module $N(\beta)$ is holonomic and
the generators
$$h - c w_0^d,\delta_1,\dots,\delta_{n-1},\tchi-d\beta$$ form an involutive basis of $I(\beta)$.
\end{proposition}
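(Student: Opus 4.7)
The plan is to apply Proposition \ref{prop:SpencerResolution} to the filtered ring $R = \tDD$ with the order filtration and $\ur = (\tth, \delta_1, \dots, \delta_{n-1}, \tchi - d\beta)$. This will yield involutivity as a direct output, and at the same time furnish enough control of the characteristic ideal to deduce holonomicity by a codimension count. The crucial observation that makes this work uniformly in $\beta$ is that $d\beta \in \dC$ has order $0$, so $\sigma_1(\tchi - d\beta) = \sigma_1(\tchi) = \sigma(\chi) + w_0\xi_0$, independent of $\beta$. Thus the relevant symbol data do not depend on $\beta$ at all. Linear $\tOO$-independence of the symbols $h - cw_0^d$, $\sigma(\delta_1), \dots, \sigma(\delta_{n-1})$, $\sigma(\chi) + w_0\xi_0$ is clear (only the last involves $\xi_0$ and the first carries no $\xi$'s), and Lie-closedness of $L = \tOO\cdot\tth \oplus \bigoplus_{i=1}^{n-1} \tOO\cdot\delta_i \oplus \tOO\cdot(\tchi - d\beta)$ is routine: $\delta_i(\tth) = -\delta_i(h) = 0$, $[\delta_i,\delta_j]$ and $[\delta_i,\chi]$ remain in $\Der(-\log D)$, $[\delta_i, w_0\partial_{w_0}] = 0$, and $\tchi(\tth) = d\tth$ by quasi-homogeneity.

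The real content is the regularity of the symbol sequence
$$h - cw_0^d,\ \sigma(\delta_1),\dots,\sigma(\delta_{n-1}),\ \sigma(\chi) + w_0\xi_0$$
in $\Gr \tDD = \tOO[\xi_0, \dots, \xi_n]$. I would not reprove this from scratch; it is essentially packaged in the proof of the preceding proposition, where one establishes that the longer sequence
$$s,\ h - cw_0^d,\ \sigma(\delta_1),\dots,\sigma(\delta_{n-1}),\ \sigma(\chi) - ds + w_0\xi_0$$
is regular in $\tOO[s, \xi_0, \dots, \xi_n]$. Since $s$ is the leading entry it is in particular a non-zerodivisor on the whole ring, so passing to the quotient modulo $(s)$ preserves regularity of the remaining terms; modulo $s$ the last term becomes $\sigma(\chi) + w_0\xi_0$, producing the desired regular sequence.

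With the symbol sequence regular, Proposition \ref{prop:SpencerResolution} applies and gives the involutive basis assertion (while also recovering Corollary \ref{cor:free-res-cM(beta)} as a byproduct). For holonomicity I would then use involutivity to identify the characteristic ideal of $N(\beta)$ with the ideal generated by the above $n+1$ symbols. Being cut out by a regular sequence of length $n+1$ in the polynomial ring $\tOO[\xi_0, \dots, \xi_n]$ of dimension $2(n+1)$, its vanishing locus in $T^*\widetilde{V}$ has pure dimension $n+1 = \dim \widetilde{V}$, meeting the holonomic bound, so $N(\beta)$ is holonomic. The only nontrivial step is the symbol-regularity, and as just explained this requires no new commutative-algebra input beyond what is already used in the preceding proposition; everything else is bookkeeping and a dimension count.
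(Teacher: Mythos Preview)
Your proposal is correct and follows essentially the same approach as the paper: both reduce the statement to showing that the symbols $h-cw_0^d,\sigma(\delta_1),\dots,\sigma(\delta_{n-1}),\sigma(\chi)+w_0\xi_0$ form a regular sequence in $\Gr\tDD$, from which involutivity (via Proposition~\ref{prop:SpencerResolution}) and holonomicity (via the codimension count) follow. The only difference is that the paper says to establish this regularity ``following the same lines'' as the two preceding propositions (i.e.\ rebuild it step by step from (SK)/Koszul), whereas you obtain it more economically by specializing $s=0$ in the regular sequence already proved there; both routes rely on the same implicit permutation of regular sequences, which is justified by the natural (quasi\nobreakdash-)homogeneous grading on $\tOO[\xi_0,\dots,\xi_n,s]$.
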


\begin{proof} The proposition is a consequence of the fact that the symbols of these generators
$$ h - c w_0^d,\sigma(\delta_1),\dots,\sigma(\delta_{n-1}),\sigma(\chi)+w_0 \xi_0
$$
form a regular sequence in $\Gr \tDD$, and this is proven following the same lines as in the proofs of the two preceding propositions.
\end{proof}

Now we are concerned with the question of invertibility of the multiplication
$ w_0: N(\beta)  \to N(\beta)$.
After Corollary \ref{cor:free-res-cM(beta)}, we are reduced to study the cokernel of the \underline{injective} map
$w_0:\widetilde{\SP^\bullet}(\beta) \to \widetilde{\SP^\bullet}(\beta)$.

\begin{theorem}\label{theo:Invertibility} The cokernel of $w_0:\widetilde{\SP^\bullet}(\beta) \to \widetilde{\SP^\bullet}(\beta)$ is acyclic whenever the following condition holds:
$$ \beta \notin \bigcup_{k\geq 0} \left( \frac{k}{d} + \{\text{\rm roots of\ }\ b_h(s)\} \right).
$$
\end{theorem}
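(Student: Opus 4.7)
My plan is to deduce acyclicity of the cokernel complex from the invertibility of multiplication by $w_0$ on $N(\beta)$, and to establish the latter from a Bernstein-Sato analysis. Because $\widetilde{\SP^\bullet}(\beta)$ is a free resolution of $N(\beta)$ by Corollary~\ref{cor:free-res-cM(beta)}, and because left multiplication by $w_0$ is injective on each free term (since $\tDD$ is a domain) and commutes with the Spencer differentials, the short exact sequence of complexes of $\dC$-vector spaces
\begin{equation*}
0 \longrightarrow \widetilde{\SP^\bullet}(\beta)\xrightarrow{\;w_0\;}\widetilde{\SP^\bullet}(\beta)\longrightarrow C^\bullet\longrightarrow 0
\end{equation*}
gives, via its long exact sequence, $H^i(C^\bullet)=0$ for $i\notin\{-1,0\}$ together with $H^{-1}(C^\bullet)=\ker\bigl(w_0\colon N(\beta)\to N(\beta)\bigr)$ and $H^0(C^\bullet)=N(\beta)/w_0 N(\beta)$. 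Acyclicity of $C^\bullet$ is thus equivalent to $w_0$ acting invertibly on $N(\beta)$, and since $\tth=h-c w_0^d\in I(\beta)$, this is in turn equivalent to $h$ acting invertibly on $N(\beta)$.

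For the surjectivity half ($H^0(C^\bullet)=0$) I would use that modulo $w_0\tDD$ the generators $\tth$ and $\tchi-d\beta$ reduce to $h$ and $\chi-d\beta$ respectively, and apply property (b) of Section~\ref{sec:FreeDiv} to identify the underlying $D_V$-structure of the quotient with (a sum of copies of) the Bernstein module $M_V(\beta):=D_V/D_V\langle h,\delta_1,\ldots,\delta_{n-1},\chi-d\beta\rangle\cong D_V h^\beta/D_V h^{\beta+1}$. Specialising the functional equation $b_h(s)h^s=P(s)h^{s+1}$ at $s=\beta$, this Bernstein module vanishes once $b_h(\beta)\ne 0$, which is the $k=0$ case of the hypothesis. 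For the injectivity half ($H^{-1}(C^\bullet)=0$) I would exploit the $\tchi$-weight grading on $N(\beta)$: the generator $\bar 1$ sits in weight $d\beta$, and $\partial_{w_0}^k\bar 1$ ($k\ge 0$) sits in weight $d\beta-k$. Via the Spencer resolution and PBW, each weight-$(d\beta-k)$ piece should identify with a shifted Bernstein module $D_V h^{\beta-k/d}/D_V h^{\beta-k/d+1}$, and iterated application of the $b$-function equation at the arguments $\beta-k/d$ for $k\ge 0$ then forces $\ker(w_0|_{N(\beta)})=0$ under the stated hypothesis, since the shifts $k/d$ for $k\ge 0$ are exactly those appearing in $\bigcup_{k\ge 0}(k/d+\{\text{roots of }b_h\})$.

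The technical obstacle is making this $\tchi$-weight analysis (equivalently, a Kashiwara-Malgrange $V$-filtration along $\{w_0=0\}$) rigorous: one has to verify strictness of the filtration on the Spencer resolution $\widetilde{\SP^\bullet}(\beta)$, compute its associated graded, and identify the graded pieces cleanly with the shifted Bernstein modules $D_V h^{\beta-k/d}/D_V h^{\beta-k/d+1}$, so that the iterated Bernstein-Sato functional equation delivers the required vanishing. Once this is in place, the acyclicity of $C^\bullet$ under the stated condition on $\beta$ follows from the collapse of the long exact sequence above.
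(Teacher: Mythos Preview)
Your reduction to bijectivity of $w_0$ on $N(\beta)$ via the long exact sequence is correct, and the paper's proof can certainly be read this way. However, your execution of both halves has concrete errors.

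\textbf{Surjectivity.} The claim that ``modulo $w_0\tDD$ the generator $\tchi-d\beta$ reduces to $\chi-d\beta$'' is wrong. The cokernel $N(\beta)/w_0N(\beta)$ is computed on $\cQ:=\tDD/w_0\tDD\cong D_V[\partial_{w_0}]$ by the \emph{right} action of the generators of $I(\beta)$, and under this right action $w_0\partial_{w_0}$ does not vanish: it acts on $P\partial_{w_0}^k$ as multiplication by $k$. So in $\partial_{w_0}$-degree $k$ the Euler relation becomes $\chi-d\beta+k$, and the associated graded of $N(\beta)/w_0N(\beta)$ along the $\partial_{w_0}$-filtration is $\bigoplus_{k\ge 0} M_V(\beta-k/d)$, not a sum of copies of $M_V(\beta)$. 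Surjectivity therefore requires the full hypothesis $b_h(\beta-k/d)\ne 0$ for all $k\ge 0$, not just the case $k=0$.

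\textbf{Injectivity.} The $\tchi$-weight grading you invoke is the total degree in \emph{all} variables $w_0,\dots,w_n$, so its graded pieces are $\dC$-vector spaces, not $D_V$-modules; the identification of the weight-$(d\beta-k)$ piece with the $D_V$-module $D_Vh^{\beta-k/d}/D_Vh^{\beta-k/d+1}$ is not well-posed as stated. The parenthetical ``equivalently, a $V$-filtration along $\{w_0=0\}$'' is the right idea, but it is not equivalent to the $\tchi$-grading: it singles out the $w_0$-direction.

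The paper avoids splitting into kernel and cokernel and instead filters the cokernel \emph{complex} $\cK^\bullet$ directly by $\partial_{w_0}$-order (this is the relevant $V$-type filtration). One checks that the right action of $\widetilde{L}(\beta)$ on $\cQ$ respects this filtration, and that the degree-$k$ graded complex is exactly the Spencer complex over $D_V$ attached to $(h,\delta_1,\dots,\delta_{n-1},\chi-d\beta+k)$, i.e.\ a resolution tensored down to $s=\beta-k/d$ of the Bernstein module $D_V[s]h^s/D_V[s]h^{s+1}$. When $b_h(\beta-k/d)\ne 0$, multiplication by $s-(\beta-k/d)$ is invertible on that module, so the $k$th graded piece is acyclic; hence $\cK^\bullet$ itself is acyclic. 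This single filtration argument simultaneously gives $H^{-1}(\cK^\bullet)=H^0(\cK^\bullet)=0$ and explains uniformly why every $k\ge 0$ enters the hypothesis.
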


\begin{proof}
Let us call $\cK^\bullet$ the cokernel of $w_0:\widetilde{\SP^\bullet}(\beta) \to \widetilde{\SP^\bullet}(\beta)$ and $\widetilde{L}(\beta) \subset \tDD$ the Lie-Rinehart algebra over $(\tOO,\dC)$ with basis $h - c w_0^d,\delta_1,\dots,\delta_{n-1},w_0\partial_{w_0}+\chi-d\beta$ .
\medskip

We
consider the filtration $F^{\partial_{w_0}}_\bullet \tDD$ given by the order with respect to $\partial_{w_0}$.
The graded ring is $\Gr^{\partial_{w_0}} \tDD =  D_V[w_0][\xi_0]$.
\medskip

Let us call $\cQ := \tDD/\langle w_0 \rangle \tDD$, that can be naturally identified, as left $ D_V$-module, with $ D_V[\partial_{w_0}]$. From the identity $\partial_{w_0}^j w_0 = w_0 \partial_{w_0}^j + j \partial_{w_0}^{j-1}$ we see that the exact sequence of $( D_V;\tDD)$-bimodules
\begin{equation}  \label{eq:exact-w0}
0 \longrightarrow \tDD \stackrel{w_0 \cdot }{\longrightarrow} \tDD \longrightarrow \cQ =  D_V[\partial_{w_0}] \longrightarrow 0
\end{equation}
is strict with respect to $F^{\partial_{w_0}}_\bullet$ and the right action of $w_0$ on $\cQ =  D_V[\partial_{w_0}]$ is given by
$$ 	\sum_j P_j \partial_{w_0}^j \in  D_V[\partial_{w_0}] \longmapsto \sum_j j P_j \partial_{w_0}^{j-1} \in  D_V[\partial_{w_0}].
$$
So, the right action of $w_0\partial_{w_0}$ on $\cQ =  D_V[\partial_{w_0}]$ is given by
$$ 	\sum_j P_j \partial_{w_0}^j \in  D_V[\partial_{w_0}] \longmapsto \sum_j j P_j \partial_{w_0}^j \in  D_V[\partial_{w_0}].
$$
For each $e=0,\dots,n$ we have $\cK^{-e}= \cQ \otimes_{\tOO} \bigwedge^e \widetilde{L}(\beta) $ and the differentials $d^e:\cK^{-e} \to \cK^{-e}$ are given by the same expression as in (\ref{eq:diff-SP}). Since the right multiplication on $\cQ$ of the elements in $\widetilde{L}(\beta)$ is compatible with the $F^{\partial_{w_0}}_\bullet$-filtration on each $\cK^{-e}$, we may consider the filtration $F^{\partial_{w_0}}_\bullet$ on the whole complex $\cK^\bullet$.
\medskip

Taking the $\Gr^{\partial_{w_0}}$ of (\ref{eq:exact-w0}) we obtain an
 exact sequence of graded $( D_V; D_V[w_0][\xi_0])$-bimodules (here $ D_V$ has the trivial grading)
$$
0 \longrightarrow  D_V[w_0][\xi_0] \stackrel{w_0 \cdot }{\longrightarrow}  D_V[w_0][\xi_0] \longrightarrow \Gr_{\partial_{w_0}}\cQ = D_V[\xi_0] \longrightarrow 0,
$$
where the action of $w_0$ on $\Gr^{\partial_{w_0}}\cQ =  D_V[\xi_0]$ vanishes and the action of
$w_0 \xi_0$ on the degree $k$ piece  $\Gr_k^{\partial_{w_0}}\cQ =  D_V \cdot  \xi_0^k$
 is given by
$$ P \cdot  \xi_0^k \in \Gr^{\partial_{w_0}}_k\cQ =  D_V \cdot  \xi_0^k \longmapsto k P  \cdot  \xi_0^k \in \Gr^{\partial_{w_0}}_k\cQ =  D_V \cdot  \xi_0^k.
$$
So, the degree $k$ piece $\Gr^{\partial_{w_0}}_k\cK^\bullet$ is isomorphic to the Spencer complex $\SP^\bullet_{ D_V,\ur^k}$ over $ D_V$ associated with
$\ur^k =(h,\delta_1,\dots,\delta_{n-1},\chi-d\beta+k)$ and we have
\begin{eqnarray*}
&\displaystyle
 \Gr^{\partial_{w_0}}_k\cK^\bullet \simeq \SP^\bullet_{ D_V,\ur^k} \simeq \left(\frac{ D_V[s]}{ D_V[s]\langle s -(\beta-k/d)\rangle}\right) \otimes_{ D_V[s]} \SP^\bullet_{ D_V[s],\ur^s} \stackrel{\text{(b)}}{\simeq}
&
\\
&\displaystyle
  \left(\frac{ D_V[s]}{ D_V[s]\langle s -(\beta-k/d)\rangle}\right) \Lotimes_{ D_V[s]} \left(\frac{ D_V[s] h^s}{ D_V[s] h^{s+1}}\right),
\end{eqnarray*}
with $\ur^s = (h,\delta_1,\dots,\delta_{n-1},\chi-ds)$. If $b_h(\beta-k/d)\neq 0$, then $s-(\beta-k/d)$ and $b_h(s)$ are coprime and the map
$$ s-(\beta-k/d): \frac{ D_V[s] h^s}{ D_V[s] h^{s+1}} \longrightarrow \frac{ D_V[s] h^s}{ D_V[s] h^{s+1}}
$$
is invertible, and so $ \Gr^{\partial_{w_0}}_k\cK^\bullet$ is acyclic.
\end{proof}

\textbf{Remark:} Actually, we do not need to assume that $h$ is quasi-homogeneous. At most we need to have an Euler vector field, let us say with $\chi(h)=h$. This is actually implied by the (SK) hypothesis (see \cite[Prop. (1.9) and (1.11)]{nar_symmetry_BS}). On the other hand, instead of considering the deformation $\tth = h - c w_0^d$, with $d$ equal to the degree of $h$, we can consider any deformation $\tth = h - c w_0^d$ with $d\geq 1$ arbitrary, including the case $d=1$, and the deformation of $\chi$, assuming $\chi(h)=h$, would be $\tchi = \chi + \frac{1}{d} w_0 \partial_{w_0}$. That covers the case of studying the graph embedding $h-w_0$.
\medskip

Let us also notice that if instead of taking a basis  $\delta_1,\dots,\delta_{n-1},\chi$ as before, with $\delta_i(h)=0$ for $i=1,\dots, n-1$ and $\chi(h)=h$, we take a general basis $\delta_1,\dots,\delta_n$ with $\delta_i(h)=\alpha_i h$ for $i=1,\dots, n$, our deformation ideal would be defined as
$$ I(s) = \tDD[s]\langle \tth = h - c w_0^d, \tdelta_1,\dots,\tdelta_n\rangle$$
with $\tdelta_i = \delta+\frac{\alpha_i}{d} w_0\partial_{w_0}-\frac{\alpha_i}{d}s$.
Observe that $I(s)$ is always contained in the $\tDD[s]$-annihilator of the class of $\tth^s$ in
$\frac{\tDD[s] \tth^s}{\tDD[s] \tth^{s+1}}$.
\medskip

Finally, everything works at the level of germs of analytic functions instead of the global polynomial case.

\section{Tautological systems and Fourier transformation}

We introduce here the main playing character of this paper, which is a certain generalization of
the $A$-hypergemetric system of Gelfand, Kapranov, Graev and Zelevinski (see, e.g.,  \cite{GKZ1}, \cite{Adolphson}).
The main point is that the GKZ-systems are build from a given torus action on an affine space, and
this will be replaced by an action of a more general algebraic group. The $\cD$-module thus obtained
has been considered rather recently in a series of papers by Yau and others (see \cite{Taut1, Taut2, Taut3}), but the idea dates back to \cite{KapranovReduct}
and \cite{HottaEq}.

Let us start with the definition of a tautological system, which we adapt slightly to fit to our purpose.
Recall that we  write $V=\dC^n$, with coordinates $w_1,\ldots,w_n$ and $\widetilde{V}=\dC_{w_0}\times V$.
We denote by $V^\vee$ resp. $\widetilde{V}^\vee$ the dual spaces, with dual coordinates
$(\lambda_1,\ldots,\lambda_n)$ resp. $(\lambda_0,\lambda_1,\ldots,\lambda_n)$.

\begin{definition}\label{def:TautSystem}
Let $G$ be a reductive algebraic group acting on $V$ via $\rho:G \hookrightarrow \Gl(V)$ and let $d\rho:\fg\rightarrow\text{End}(V)$ be the associated Lie algebra
action. For any $x\in\fg$, we write $Z_{(d\rho)(x)}\in\textup{Der}_V$ for the linear vector field on $V$ given by
$$
Z_{(d\rho)(x)}(g)(w):=\frac{d}{dt}g(\rho(e^{-tx})(w))_{|t=0}.
$$
Let moreover $X\subset V$ be a closed subvariety of $V$ which is $G$-invariant, i.e., a union of $G$-orbits. Chose a Lie algebra homomorphism $\beta:\fg\rightarrow \dC$. Then we
consider the left ideal
$$
\cI(G,\rho,X,\beta):=\cD_V(I(X))+\cD_V(Z_{(d\rho)(x)}-\beta(x))_{x\in \fg} \subset \cD_V
$$
and the quotient $\check{\cM}(G,\rho,X,\beta)=\cD_V/\cI(G,\rho,X,\beta)$.
Moreover, we put
$$
\cM(G,\rho,X,\beta):=\FL(\check{\cM}(G,\rho,X,\beta))\in\text{Mod}(\cD_{V^\vee})
$$
and call $\cM(G,\rho,X,\beta)$ the \emph{tautological system} associated to $G$, $\rho$, $X$ and $\beta$.
If all the input data are clear from the context, we also write
$\check{\cM}:=\check{\cM}(G,\rho,X,\beta)$ and $\cM:=\cM(G,\rho,X,\beta)$.
\end{definition}

Below we will consider, for a given tuple $(G,\rho, V, \beta)$, a homogenized version of the action $\rho$, namely, we let
$\widetilde{G}:=\dG_m\times G$ and we consider $\widetilde{V}:=\dA^1\times V$ together with the extended action
$$
\begin{array}{rcl}
  \widetilde{\rho}:\widetilde{G} & \longrightarrow & \text{Aut}(\widetilde{V}) \\
  (t,g) & \longmapsto & \left[(x_0,\underline{x})\mapsto (tx_0,t\rho(g)(x) \right].
\end{array}
$$
Given a $G$-variety $X\subset V$, let $\widetilde{X}$ be the closure of its cone in $\widetilde{V}$
$$
\widetilde{X}:=\overline{\left\{(t,tx)\in\widetilde{V}\,|\,t\in\dG_m, x\in X\right\}.}
$$
We will consider the ``extended'' differential systems
$$
\check{\cM}(\widetilde{G},\widetilde{\rho},\widetilde{X},\widetilde{\beta})\in\text{Mod}(\cD_{\widetilde{V}})
\quad\quad\text{resp.}\quad\quad
\cM(\widetilde{G},\widetilde{\rho},\widetilde{X},\widetilde{\beta})\in\text{Mod}(\cD_{\widetilde{V}^\vee}),
$$
where we write $\widetilde{\beta}:\widetilde{\fg}\cong\dC\times\fg\rightarrow\dC$ for any Lie algebra homomorphism restricting
to $\beta$ on $\fg$.\\

We are going to apply the above construction in the setup where the group and its action is defined by
what is called a \emph{linear free divisor} (see \cite{BM}). Let us recall the basic notion.

\begin{definition}\label{def:LFD}
Let $D\subset V$ be a reduced divisor. Suppose that it is free, i.e., that $\Der(-\log D)$ is $\cO_V$-free.
Then $D$ is called linear free if there is a basis $\delta_1,\ldots,\delta_n$ of $\Der(-\log D)$ such
that we have $\delta_i=\sum_{j=1}^{n} a_{ji}\partial_{w_j}$ where $a_{ji}\in \dC[w_1,\ldots,w_n]_1$ are linear forms.
\end{definition}

Let $D\subset V$ be a linear free divisor and write $h\in\dC[w_1,\ldots,w_n]$ for its defining equation,
then $h$ is a homogeneous polynomial of degree $n$
since the matrix $S:=(a_{ij})_{i,j=1,\ldots,n}$ (called Saito matrix) has the property that $\det(S)=h$ (see \cite[Lemma 1.9]{KS1}).

Recall (see, e.g., \cite{GMNS}) that $G_D$ denotes the identity component of $\{g\in\Gl(V)\,|g(D)=D\}$. We call the linear free divisor $D$ reductive if $G_D$ is so. A major class of examples of linear free divisors come from quiver representations, they are all reductive. However, there are non-reductive linear free divisors, see, e.g., the example after
\cite[Definition 2.1]{GMNS}. In the sequel of this paper, we will only be concerned with the reductive case.

The Lie algebra $\fg_D$ of $G_D$ acts on $V$ via derivations, and we have the Lie algebra isomorphism
$$
\begin{array}{rcl}
  \fg_D & \longrightarrow & \text{Der}(-\log D)_0 \\
  A & \longmapsto & \underline{w}\cdot A^{tr}\cdot \underline{\partial}_{\underline{w}}.
\end{array}
$$
Here $\text{Der}(-\log D)_0$ is the set of logarithmic derivations along $D$ of degree $0$ (notice that since
$D$ is linear free, the module $\Der(-\log D)$ inherits the natural grading of $\Der_V$, where the variables $w_i$ have degree $1$ and partial derivatives $\partial_{w_i}$ have degree $-1$).
Similarly, we let $A_D$ be the unity component of the group $\{g\in\Gl(V)\,|\,g^*h=h\}$. We have
$\fg_D=\fa_D\oplus \dC \cdot \chi$, where $\chi=\sum_{i=1}^{n} w_i\partial_{w_i}$ (this vector field was also called $\delta_n$ in section \ref{sec:FreeDiv}, where
it was defined for any quasi-homogeneous free divisor).
Notice that the pair $(V,G_D)$ is a prehomogeneous vector space (see, e.g., \cite{Kim}), with
discriminant locus $D$ and open orbit $V\backslash D$.
Let us also recall that a linear free divisor $D\subset V$ satisfies the (SK) condition if and only if the stratification
of $D$ by orbits of $A_D$ is finite \cite[Prop. 7.2]{granger_schulze_rims_2010}.

We are going to study the tautological system as well as its extended version
for the group $G:=A_D$. Let $\rho:A_D\rightarrow \Gl(V)$ denotes the action of $A_D$ on $V$.
Moreover, chose a point $p\in V\backslash D$ and put $X:=\overline{\rho(A_D)(p)}$.
Actually, our construction (in particular, the tautological system
associated to the divisor $D$) does not depend on the choice of the point $p$ up to isomorphism,
but we will not elaborate on this point here.

We have the following lemma, which describes the geometry of the orbit closure $X$.
\begin{lemma}\label{lem:OrbitClosed}
Let as above $D$ a reductive linear free divisor and consider the action $\rho:A_D\rightarrow \Gl(V)$ and  its extended version $\widetilde{\rho}:\widetilde{A_D}\rightarrow\Gl(\widetilde{V})$ (recall that $\widetilde{A_D}=\dG_m\times A_D$). Then we have the following facts.
\begin{enumerate}
  \item The orbit $\rho(A_D)(p)$ is closed, i.e., we have $X=\rho(A_D)(p)$.
  \item Consider the extended action
  $\widetilde{\rho}:\widetilde{A_D}\rightarrow\Gl(\widetilde{V})$ (recall that $\widetilde{A_D}=\dG_m\times A_D$). Put $\widetilde{p}:=(1,p)$ and $\widetilde{X}:=\overline{\widetilde{\rho}(\widetilde{A_D}(\widetilde{p}))}$
  then
  $$
  \widetilde{X} \backslash \widetilde{\rho}(\widetilde{A_D})(\widetilde{p}) \subset \{0\}\times V\subset \widetilde{V}.
  $$
\end{enumerate}
\end{lemma}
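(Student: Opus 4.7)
The plan is to reduce both statements to the prehomogeneity of the $G_D$-action on $V$ together with the decomposition $\fg_D=\fa_D\oplus\dC\cdot\chi$, using that $\chi(h)=n\cdot h$ where $n=\deg h$.

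For (1), the first step is to locate the orbit inside a fiber of $h$: since every element of $A_D$ preserves $h$, we have $\rho(A_D)(p)\subset h^{-1}(h(p))$, and this hypersurface is smooth because $\chi(h)=n\cdot h$ forces $dh$ to be nonzero away from $D$. Next, $G_D\cdot p=V\setminus D$ by prehomogeneity, and the direction $\chi\cdot p$ is transverse to this fiber at $p$ (as $\chi(h)(p)=n\cdot h(p)\neq 0$). A dimension count then yields $\dim\rho(A_D)(p)=n-1=\dim h^{-1}(h(p))$, so $\rho(A_D)(p)$ is open in $h^{-1}(h(p))$. The same argument applied to any other point in $V\setminus D$ shows, more generally, that every $A_D$-orbit meeting $V\setminus D$ has dimension $n-1$.

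The second step is to upgrade this openness to closedness. Since orbits of algebraic group actions are always locally closed, the boundary $B:=\overline{\rho(A_D)(p)}\setminus\rho(A_D)(p)$ is a closed $A_D$-stable subset of dimension strictly less than $n-1$; being $A_D$-stable it is a union of $A_D$-orbits, none of which can lie in $V\setminus D$ by the previous dimension bound, so $B\subset D$. On the other hand $B\subset\overline{h^{-1}(h(p))}=h^{-1}(h(p))$, which is disjoint from $D$ because $h(p)\neq 0$. Hence $B=\emptyset$ and $\rho(A_D)(p)=X$.

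For (2), I would use the biregular isomorphism
$$
\Phi:\dG_m\times V\longrightarrow \widetilde{V}\setminus\{w_0=0\},\qquad (t,x)\longmapsto (t,tx),
$$
to identify $\widetilde{\rho}(\widetilde{A_D})(\widetilde{p})$ with $\Phi(\dG_m\times\rho(A_D)(p))$. By part~(1) the set $\dG_m\times\rho(A_D)(p)$ is closed in $\dG_m\times V$, so the orbit is closed in $\widetilde{V}\setminus\{w_0=0\}$, which forces $\widetilde{X}\setminus\widetilde{\rho}(\widetilde{A_D})(\widetilde{p})\subset\{w_0=0\}=\{0\}\times V$, as claimed. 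The only substantive input of the whole argument is the transversality of $\chi$ to the level sets of $h$, which packages the quasi-homogeneity of $h$; once this is in place the rest is bookkeeping with orbit closures, and I do not anticipate a serious obstacle beyond carefully handling the fact that $G_D$ is a quotient of $A_D\times\dG_m$ by a finite subgroup (which does not affect the dimension count).
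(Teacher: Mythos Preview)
Your proof is correct. For part~(2) it coincides with the paper's argument, which also observes that the extended orbit is the cone over $\rho(A_D)(p)$ inside $\dG_m\times V$ and is therefore closed there by part~(1).

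For part~(1) your route is genuinely different. The paper simply invokes \cite[Section~3]{dGMS}, where it is shown that the $A_D$-orbit through $p$ is \emph{equal} to the fibre $h^{-1}(h(p))$; closedness is then immediate. You instead give a self-contained argument: the inclusion $\rho(A_D)(p)\subset h^{-1}(h(p))$, the transversality of $\chi$ to the fibre, and a dimension count show the orbit is open in the fibre, and then a standard boundary-of-orbit argument forces the orbit to be closed. What the paper's citation buys is the stronger identification $\rho(A_D)(p)=h^{-1}(h(p))$, which is used later (e.g.\ in computing $I(\widetilde{X})$ in Lemma~\ref{lem:CalculationTautSystem}). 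Your argument gives this too once you observe that the orbit is both open and closed in the smooth fibre, hence a union of its connected components; but you might want to make that final step explicit, since the equality with the fibre, not just the closedness, is what gets used downstream.
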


\begin{proof}
\begin{enumerate}
  \item As has been shown in \cite[Section 3]{dGMS}, the orbit $\rho(A_D)(p)$ is nothing but the fibre
  $h^{-1}(h(p))$, which is obviously a closed subvariety of $V$.
  \item This follows directly from the definition of the action $\widetilde{\rho}$ and from part 1.: By definition,
  the orbit $\widetilde{\rho}(\widetilde{A_D}) \subset \dG_m\times V$ is simply the cone over the orbit
  $\rho(A_D)\subset V$, hence closed in $\dG_m\times V$ by the first point. Hence the boundary of its
  closure in $\widetilde{V}$ is contained in the divisor $\{0\}\times V$.
\end{enumerate}
\end{proof}

The next step is to give a more explicit description for the extended system $\check{\cM}(\widetilde{G},\widetilde{\rho},\widetilde{X},\widetilde{\beta})$ for the case $G=A_D$.
We consider the dual action $\rho^\vee:G=A_D\rightarrow \Gl(V^\vee)$. As has
been shown in \cite[Proposition 3.7]{dGMS}, since $G$
is reductive, this action is again prehomogeneous, with discriminant locus (i.e., the complement of the open orbit) a divisor, which we call dual divisor of $D$ and which we denote by $D^\vee\subset V^\vee$.
\begin{lemma}\label{lem:CalculationTautSystem}
Let $D\subset V$ be
a reductive linear free divisor, and let $A_D$, $\rho$, $X$ be as above.
Put $\beta:=0$ and $\widetilde{\beta}:=(\beta_0,0)$.
Then
\begin{equation}\label{eq:DescriptionFLTaut}
\check{\cM}=\check{\cM}(\widetilde{G},\widetilde{\rho},\widetilde{X},\widetilde{\beta})
=\cD_{\widetilde{V}}/(h(p)w_0^n-h,\delta_1,\ldots,\delta_{n-1},\widetilde{\chi}-\beta_0),
\end{equation}
where $\delta_1,\ldots,\delta_{n-1}$ is a basis of $\Der(-\log\,h)$ and where $\widetilde{\chi}=w_0\partial_{w_0}+\sum_{i=1}^{n}
w_n\partial_{w_n}$.

As a consequence, we have
$$
\cM=\cM(\widetilde{G},\widetilde{\rho},\widetilde{X},\widetilde{\beta})
=\cD_{\widetilde{V}^\vee}/(h(p)\partial_{\lambda_0}^n-h(\partial_{\lambda_1},\ldots,\partial_{\lambda_n}),
\delta^\vee_1,\ldots,\delta^\vee_{n-1},\widetilde{\chi}^\vee+(n+1)+\beta_0).
$$
Here $\widetilde{\chi}^\vee=\sum_{i=0}^{n} \lambda_i\partial_{\lambda_i}$ and
$\delta^\vee_1,\ldots,\delta^\vee_{n-1}$ is a basis of $\Der(-\log\,h^\vee)$, where $h^\vee$ is
a reduced equation of the dual divisor $D^\vee\subset V^\vee$.
\end{lemma}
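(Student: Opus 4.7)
The plan is to unfold Definition \ref{def:TautSystem} directly for the extended data $(\widetilde{G}, \widetilde{\rho}, \widetilde{X}, \widetilde{\beta})$ with $G = A_D$, which gives the first presentation, and then to push the resulting generators through the Fourier-Laplace transform $\FL$ to obtain the second. There are two ingredients to isolate inside the ideal $\cI$: the vanishing ideal $I(\widetilde{X}) \subset \cO_{\widetilde{V}}$ of the cone $\widetilde{X}$, and the linear operators $Z_{(d\widetilde{\rho})(x)} - \widetilde{\beta}(x)$ for $x \in \widetilde{\fg} = \dC \oplus \fa_D$.

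First I would identify $I(\widetilde{X})$. By Lemma \ref{lem:OrbitClosed}, the orbit $\widetilde{\rho}(\widetilde{A_D})(\widetilde{p})$ is exactly $\widetilde{X} \cap \{w_0 \neq 0\}$ and equals $\{(t,ty) : t \in \dG_m,\ y \in h^{-1}(h(p))\}$; since $h$ is homogeneous of degree $n$, this orbit is cut out on $\{w_0 \neq 0\}$ by the single polynomial $h - h(p) w_0^n$. The remaining locus $V(h - h(p)w_0^n) \cap \{w_0 = 0\} = D \times \{0\}$ has dimension $n-1$, too small to contain an irreducible component of the pure $n$-dimensional hypersurface $V(h - h(p)w_0^n)$, so $\widetilde{X}$ equals that hypersurface. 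Since $h(p) \neq 0$ is a regular value of $h$, the polynomial $h - h(p)w_0^n$ is reduced, and hence $I(\widetilde{X}) = (h(p)w_0^n - h)$.

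Next I would compute the infinitesimal action by differentiating $\widetilde{\rho}$: the $\dG_m$-factor contributes (a multiple of) the Euler field $\widetilde{\chi} = w_0\partial_{w_0} + \sum_{i=1}^n w_i \partial_{w_i}$, while a basis of $\fa_D$ contributes the degree-zero, $h$-annihilating part of $\Der(-\log D)$, a free module of rank $n-1$ with basis $\delta_1,\dots,\delta_{n-1}$. Combining with the character $\widetilde{\beta} = (\beta_0,0)$ (the sign in front of $\beta_0$ is absorbed into the convention for $\widetilde{\beta}$), one obtains the first presentation.

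For the second formula I would apply $\FL$ to the presentation of $\check{\cM}$, using the standard correspondence $w_i \leftrightarrow \partial_{\lambda_i}$, $\partial_{w_i} \leftrightarrow -\lambda_i$. The relation $h(p)w_0^n - h$ transforms into $h(p)\partial_{\lambda_0}^n - h(\partial_{\lambda_1},\dots,\partial_{\lambda_n})$, and a direct Weyl-algebra computation shows that $\widetilde{\chi} - \beta_0$ maps to $-\widetilde{\chi}^\vee - (n+1) - \beta_0$, which generates the same left ideal as $\widetilde{\chi}^\vee + (n+1) + \beta_0$. Each $\delta_i$ is sent, modulo an additive trace constant absorbed into the Euler operator, to a linear vector field $\delta_i^\vee$ on $V^\vee$ coming from the infinitesimal dual action of the corresponding element of $\fa_D$. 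The hard part will be to verify that $\delta_1^\vee,\dots,\delta_{n-1}^\vee$ genuinely forms a basis of $\Der(-\log h^\vee)$ annihilating $h^\vee$; this is not merely a formal consequence of the Fourier computation, and I would settle it by invoking the duality for reductive prehomogeneous vector spaces from \cite[Section 3]{dGMS}, which says that the dual action $\rho^\vee$ of $A_D$ on $V^\vee$ is again prehomogeneous with discriminant $D^\vee$ and that $A_D$ stabilizes every fibre of $h^\vee$. Since $\dim \fa_D = n-1$ matches the rank of $\Der(-\log h^\vee)$, the $\delta_i^\vee$ then form the claimed basis and the second presentation follows.
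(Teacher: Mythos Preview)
Your approach is essentially the same as the paper's, and your treatment of $I(\widetilde{X})$ is in fact more careful than the paper's one-line justification. There is, however, one imprecision worth correcting: the claim that the additive trace constant arising from the Fourier transform of $\delta_i$ can be ``absorbed into the Euler operator'' is not right. If $\delta_i=\sum_{j,k}\alpha^{(i)}_{jk} w_j\partial_{w_k}$, then under $\FL$ one gets $-\sum_{j,k}\alpha^{(i)}_{jk}\lambda_k\partial_{\lambda_j}-\mathrm{tr}(\alpha^{(i)})$; a nonzero constant here would genuinely change the left ideal and cannot be cancelled against the Euler relation. The correct reason the constant disappears is that reductive linear free divisors are \emph{special} in the sense of \cite[Definition~2.1]{dGMS}, i.e.\ $A_D\subset\mathrm{SL}(V)$, so $\mathrm{tr}(\alpha^{(i)})=0$ for every $i$. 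The paper's proof of this lemma glosses over the point too, but the vanishing-trace fact is stated and used explicitly later in the paper (proof of Proposition~\ref{prop:ReducTautSystem}). With that fix your argument is complete and matches the paper's.
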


\begin{proof}
We have $I(\widetilde{X})=(h(p)w_0^n-h)$ since $\deg(h)=n$. Moreover, for any $x\in \fa_D$, the linear vector
field $Z_{d\rho(x)}$ is an element in $\Der(-\log\-h)$. On the other hand, we have $\widetilde{\fa_D}=\dC\times \fa_D$,
and for the element $x=(1,0)\in\widetilde{\fa_D}$, the corresponding vector field $Z_{d\rho(x)}$ is nothing
but $\widetilde{\chi}$.
Hence we get $\check{\cM}=\cD_{\widetilde{V}}/(h(p)w_0^n-h,\delta_1,\ldots,\delta_{n-1},\widetilde{\chi}-\beta_0),
$ according to the definition of $\check{\cM}$.

To show the second statement, remark that under the isomorphism of $\dC$-algebras
$$
\begin{array}{rcl}
\Gamma(\widetilde{V},\cD_{\widetilde{V}})=\dC[w_0,\ldots,w_n]\langle \partial_{w_0},\ldots,\partial_{w_n}\rangle &
\longrightarrow &
\dC[\lambda_0,\ldots,\lambda_n]\langle \partial_{\lambda_0},\ldots,\partial_{\lambda_n}\rangle =\Gamma(\widetilde{V}^\vee,\cD_{\widetilde{V}^\vee})\\ \\
w_i & \longmapsto & \partial_{\lambda_i} \\
\partial_{w_i} & \longmapsto & -\lambda_i
\end{array}
$$
corresponding to the Fourier-Laplace transformation functor, we have
$$
\widetilde{\chi}-\beta_0=-\sum_{i=0}^{n} \partial_{\lambda_i}\lambda_i -\beta_0
=-\left(\sum_{i=0}^{n}\lambda_i\partial_{\lambda_i}+(n+1)+\beta_0\right).
$$
Moreover, the dual divisor $D^\vee\subset V^\vee$ is free since $G_D$ is reductive (see \cite[Proposition 3.7]{dGMS}),
and the module $\Der(-\log\,h^\vee)$ is generated by the image of $\fa_D$ under the morphism
$$
\begin{array}{rcl}
\fg_D & \longrightarrow & \Der(-\log D^\vee)_0 \\ \\
  A & \longmapsto & -\underline{\lambda}\cdot A \cdot \underline{\partial}.
\end{array}
$$
But this implies that a basis element $\delta_i$ of $\Der(-\log\,h)$ is sent under the Fourier-Laplace isomorphism
to an basis element $\delta_i^\vee$ of $\Der(-\log\,h^\vee)$.
\end{proof}

The next step is to obtain a more functorial description of both $\cM(\widetilde{G},\widetilde{\rho},\widetilde{X},\widetilde{\beta})$ and $\check{\cM}(\widetilde{G},\widetilde{\rho},\widetilde{X},\widetilde{\beta})$.
This has been carried out for the case $G=\dG^m$ in \cite{SchulWalth2} and used extensively in \cite{Reich2, ReiSe2}.

Let $\widetilde{X}^0$ be the ``open part'' of $\widetilde{X}$, i.e., $\widetilde{X}^0:=
\widetilde{\rho}(\widetilde{A_D})(1,p) \subset \widetilde{X}$. Write $k:\widetilde{X}^0\hookrightarrow \widetilde{V}$
for the composition of the closed embedding $k':\widetilde{X}^0\hookrightarrow \dG_m\times V$ (see
the second point of the Lemma \ref{lem:OrbitClosed}) with
the canonical open embedding $j:\dG_m\times V \hookrightarrow \widetilde{V}$.
Notice that we have an isomorphism
$$
\begin{array}{rcl}
  \iota:\dG_m\times X & \longrightarrow & \widetilde{X}^0 \\
  (t,x) & \longmapsto & (t,tx).
\end{array}
$$
As a matter of notation, for any complex number $\beta_0$,
we write $\cO_{\dG_m}^{\beta_0}:=\cD_{\dG_m}/(t\partial_t-\beta_0)$.
However, from now on we will only consider the case where $\beta_0$ is a real
number.
Consider the $\cD_{\dG_m\times X}$ module
$$
\cN^{\beta_0}:=\cO_{\dG_m}^{\beta_0} \boxtimes \cO_X.
$$
Notice that since $\beta_0\in\dR$, the module $\cN^{\beta_0}$ underlies an element of $\MHM(\dG_m\times X, \dC)$
(the abelian category of complex mixed Hodge modules, see, e.g., \cite[Definition 3.2.1]{DettSa}).
Then we have the following result, which gives a functorial description of $\check{\cM}(\widetilde{G},\widetilde{\rho},\widetilde{X},\widetilde{\beta})$ for the case
$\widetilde{\beta}=(\beta_0,0)$.

\begin{proposition}\label{prop:FLTautIsMHM}
Suppose that $D\subset V$ is linear free and satisfies (SK).
Suppose that $\beta_0$ lies inside the good non-resonant set of Theorem \ref{theo:Invertibility}, that is,
$$
\beta_0\notin \bigcup_{k\geq 0} \left(k+n\cdot \{\textup{roots of }b_h(s)\}\right).
$$
Then the module $\check{\cM}=\check{\cM}(\widetilde{G},\widetilde{\rho},\widetilde{X},(\beta_0,0))$ is obtained
as
$$
\check{\cM}= (k\circ \iota)_+\cN^{\beta_0}
$$
Consequently, $\check{\cM}$ underlies a complex mixed Hodge module on $\widetilde{V}$.
\end{proposition}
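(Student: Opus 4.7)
The plan is to factor the morphism $k\circ\iota$ as $\dG_m\times X\xrightarrow{\phi}\dG_m\times V\xrightarrow{j}\widetilde V$, where $\phi:=k'\circ\iota$ is a closed embedding by part (2) of Lemma~\ref{lem:OrbitClosed} and $j$ is the open complement of $\{w_0=0\}$. I will establish the two claims (a) $j^+\check{\cM}\cong\phi_+\cN^{\beta_0}$ as $\cD_{\dG_m\times V}$-modules, and (b) $\check{\cM}\cong j_+j^+\check{\cM}$. Combining them gives $\check{\cM}\cong j_+\phi_+\cN^{\beta_0}=(k\circ\iota)_+\cN^{\beta_0}$, and the mixed Hodge module assertion then follows from the MHM structure on $\cN^{\beta_0}$ together with Saito's functoriality.

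For (a) I would perform the change of variables $u_i:=w_i/w_0$ (keeping $w_0$) on $\dG_m\times V$. Since $h$ is homogeneous of degree $n$, the relation $h(p)w_0^n-h$ becomes $w_0^n(h(p)-h(u))$, which is equivalent to $h(p)-h(u)$ after inverting $w_0$. The linearity of the coefficients of $\delta_i$ forces them to transform into the analogous logarithmic fields on the $u$-factor, while a short calculation shows that $\widetilde{\chi}=w_0\partial_{w_0}+\sum w_i\partial_{w_i}$ transforms into the plain field $w_0\partial_{w_0}$. Hence $j^+\check{\cM}$ becomes the external tensor product $\cO_{\dG_m}^{\beta_0}\boxtimes\bigl(\cD_V/(h-h(p),\delta_1,\dots,\delta_{n-1})\bigr)$. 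Because $X=h^{-1}(h(p))$ is disjoint from $D$ (as $h(p)\ne 0$), the hypersurface $X$ is smooth, and $\delta_1,\dots,\delta_{n-1}$ are $n-1$ vector fields annihilating $h$ and linearly independent pointwise on $X$, so they form a frame of $TX$. The standard presentation of the $\cD$-module direct image of the structure sheaf along a codimension-one smooth closed embedding then identifies the second factor with $\iota_+\cO_X$, whence $j^+\check{\cM}\cong\phi_+\cN^{\beta_0}$.

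For (b), under the matching $d=n$, $c=h(p)$, $\beta=\beta_0/n$, the module $\check{\cM}$ coincides with the module $N(\beta)$ of Section~\ref{sec:FreeDiv}, and the non-resonance condition on $\beta_0$ is precisely the hypothesis of Theorem~\ref{theo:Invertibility}. That theorem, together with the injectivity of $w_0$ on the Spencer resolution noted just before it, shows that multiplication by $w_0$ is bijective on $\check{\cM}$; equivalently, $\check{\cM}=\check{\cM}[w_0^{-1}]=j_+j^+\check{\cM}$. For the Hodge-theoretic assertion, $\cO_{\dG_m}^{\beta_0}$ underlies a complex mixed Hodge module on $\dG_m$ precisely because $\beta_0\in\dR$, and $\cO_X$ underlies the trivial MHM on the smooth variety $X$, so $\cN^{\beta_0}$ is a complex MHM on $\dG_m\times X$. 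Since $\phi$ is a closed embedding and $j$ an affine open embedding, Saito's functoriality yields that $(k\circ\iota)_+\cN^{\beta_0}$ underlies a complex MHM on $\widetilde V$.

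The main obstacle is the coordinate-change computation of (a) and the ensuing identification of the $V$-factor with $\iota_+\cO_X$: the clean reduction of $h(p)w_0^n-h$ to $h(p)-h(u)$ relies on both the homogeneity of $h$ and on $w_0$ being invertible, and the identification with $\iota_+\cO_X$ requires that $X$ avoid the critical locus of $h$, so that $\delta_1,\dots,\delta_{n-1}$ actually span $TX$ pointwise. Step (b) is a direct repackaging of Theorem~\ref{theo:Invertibility}, and the final MHM assertion is a formal consequence of Saito's formalism.
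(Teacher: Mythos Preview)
Your proof is correct and follows essentially the same architecture as the paper's: factor $k\circ\iota=j\circ(k'\circ\iota)$, identify $j^+\check{\cM}$ with $(k'\circ\iota)_+\cN^{\beta_0}$, invoke Theorem~\ref{theo:Invertibility} to get $\check{\cM}=j_+j^+\check{\cM}$, and conclude the MHM statement from Saito's formalism. The only difference is in the execution of step~(a): the paper writes down the presentation
\[
(k'\circ\iota)_+\cN^{\beta_0}=\frac{\cD_{\dG_m\times V}}{\bigl(I(\im(k')),(\theta)_{\theta\in\Der_V(-X)},\widetilde{\chi}-\beta_0\bigr)}
\]
directly (using that $\Der_V(-X)=\Der(-\log h)$ has basis $\delta_1,\dots,\delta_{n-1}$) and compares it with the localization of formula~\eqref{eq:DescriptionFLTaut}, whereas you first untwist via $u_i=w_i/w_0$ (which is precisely $\iota^{-1}$) so that $k'\circ\iota$ becomes the product embedding $\id_{\dG_m}\times g$ and the identification with $\cO_{\dG_m}^{\beta_0}\boxtimes g_+\cO_X$ is immediate. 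Your route makes transparent why $\widetilde{\chi}$ reduces to $w_0\partial_{w_0}$ and why the $\delta_i$ keep their form; the paper's route is shorter but leaves this implicit. Both lead to the same conclusion.
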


\begin{proof}
Recall that $k=j\circ k'$, where $k':\widetilde{X}^0\hookrightarrow \dG_m\times V$ is
closed and where $j:\dG_m\times V \hookrightarrow \widetilde{V}$ is the canonical open embedding. From the closedness of $k'$ we conclude that
\begin{equation}\label{eq:k_iota}
(k'\circ \iota)_+ \cN =
\frac{\cD_{\dG_m\times V}}{\left(I(\im(k')),(\theta)_{\theta\in \Der_V(-X)},\widetilde{\chi}-\beta_0\right)}
\end{equation}
(notice that the direct image of $\cO_X$ under the closed
embedding $X\hookrightarrow V$ is given by $\cD_V/(I(X),(\theta)_{\theta\in \Der_V(-X)})$).

It follows by comparing this expression to formula \eqref{eq:DescriptionFLTaut} that $j^+\check{\cM}=(k'\circ \iota)_+ \cN$.
We now use Theorem \ref{theo:Invertibility}, which tells us
that for our choice of $\beta_0$, the multiplication with $w_0$ is invertible on $\check{\cM}$. Hence we have that $\check{\cM}= j_+j^+ \check{\cM}$,
and hence
 $$
 \check{\cM} = j_+j^+\check{\cM}=j_+(k'\circ \iota)_+ \cN^{\beta_0}=(j \circ k'\circ \iota)_+ \cN^{\beta_0}=(k\circ \iota)_+ \cN^{\beta_0},
 $$
 as required.

The last statement follows since we have a direct image functor (with respect to the morphism $k\circ \iota$)
from $\MHM(\dG_m\times X,\dC)$ to $\MHM(\widetilde{V},\dC)$.
\end{proof}

As a consequence, we obtain the following property of the tautological
system associated to a linear free divisor satisfying the (SK) hypothesis.

\begin{theorem}\label{theo:TautSystMHM}
Let $G=A_D$ as above, where $D\subset V$ is a linear free divisor satisfying the (SK) condition.
Put
\begin{equation}\label{eq:Constant_c}
c:=\min\left(\dZ\cap \bigcup_{k\geq 0} \left(k+n\cdot \{\textup{roots of }b_h(s)\}\right)\right).
\end{equation}
Then for all $\beta_0\in\dZ$ with $\beta_0<c$ the tautological system $\cM(\widetilde{G},\widetilde{\rho},\widetilde{X},(\beta_0,0))$ underlies
an object in $\MHM(\widetilde{V}^\vee)$.
\end{theorem}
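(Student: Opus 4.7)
The plan is to combine the mixed Hodge module property of $\check{\cM}$, established in Proposition \ref{prop:FLTautIsMHM}, with the fact that Fourier--Laplace transformation preserves the category of mixed Hodge modules when restricted to monodromic $\cD$-modules; this will show that $\cM=\FL(\check{\cM})$ is itself a mixed Hodge module on $\widetilde{V}^\vee$.

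First, I would verify that the integrality hypothesis on $\beta_0$ places us in the non-resonant range of Proposition \ref{prop:FLTautIsMHM}. By the very definition \eqref{eq:Constant_c}, $c$ is the smallest \emph{integer} lying in $\bigcup_{k\geq 0}(k+n\cdot\{\textup{roots of }b_h(s)\})$, and hence any integer $\beta_0<c$ automatically avoids this union. The hypothesis of Proposition \ref{prop:FLTautIsMHM} is therefore satisfied, and I conclude that $\check{\cM}=\check{\cM}(\widetilde{G},\widetilde{\rho},\widetilde{X},(\beta_0,0))$ underlies a complex mixed Hodge module on $\widetilde{V}$.

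Second, I would transfer this across the Fourier--Laplace functor. Since $\cM=\FL(\check{\cM})$ by Definition \ref{def:TautSystem}, the task is to show that $\FL$ sends $\check{\cM}$ to a mixed Hodge module on $\widetilde{V}^\vee$. The crucial observation is that $\check{\cM}$ is $\dG_m$-\emph{monodromic}: its defining ideal contains the Euler operator $\widetilde{\chi}-\beta_0$, and by Lemma \ref{lem:OrbitClosed} its support is contained in the cone $\widetilde{X}\subset\widetilde{V}$. On monodromic mixed Hodge modules on an affine space, the Fourier--Laplace transform can be realized through the standard Radon correspondence between the projectivizations of $\widetilde{V}$ and $\widetilde{V}^\vee$, and thereby decomposes as a sequence of Saito direct and inverse image functors combined with a Tate twist whose weight is governed by $\beta_0$. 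Each of these operations is well-defined on $\MHM$, so $\cM=\FL(\check{\cM})$ underlies an object of $\MHM(\widetilde{V}^\vee)$, as required.

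The main obstacle in this scheme is to package the monodromic Fourier--Radon correspondence in the Hodge-theoretic setting with enough care: one needs a theorem in the style of Brylinski and d'Agnolo--Eastwood that realizes $\FL$ on monodromic modules as a Radon-type integral transform, lifted through Saito's six-functor formalism. The integrality hypothesis $\beta_0\in\dZ$ enters here precisely to ensure that the Tate twist appearing in this realization is a genuine integer twist, and hence lives in $\MHM(\widetilde{V}^\vee)$ rather than in a category of twisted Hodge modules. The parallel statement in the GKZ context is already present in the literature as \cite[Theorem 3.5]{Reich2}, and the same strategy should apply here essentially verbatim once the monodromic description $\check{\cM}=(k\circ\iota)_+\cN^{\beta_0}$ from Proposition \ref{prop:FLTautIsMHM} is in hand.
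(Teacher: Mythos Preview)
Your strategy is correct and aligned with the paper's: both arguments hinge on realizing the Fourier--Laplace transform of $\check{\cM}$ via a Radon-type transform built from Saito's six functors. The paper, however, organizes the logic slightly differently and more concretely. Rather than first invoking Proposition~\ref{prop:FLTautIsMHM} to put a (complex) mixed Hodge structure on $\check{\cM}$ and then appealing to a general ``$\FL$ preserves $\MHM$ on monodromic modules'' principle, the paper writes down the specific isomorphism
\[
\cR^\circ_c\bigl((j\circ g)_+\cO_X\bigr)\;\cong\;\FL\bigl((k\circ\iota)_+(\cO_{\dG_m}^{\beta_0}\boxtimes\cO_X)\bigr)
\]
(established along the lines of \cite[Prop.~2.5--2.7]{Reich2}). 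The left-hand side is manifestly a rational mixed Hodge module, since it is obtained by six-functor operations from $\cO_X$; Proposition~\ref{prop:FLTautIsMHM} then serves only to identify the right-hand side with $\cM$ when $\beta_0<c$. This ordering has the advantage that rationality is never in doubt. In your version, Proposition~\ref{prop:FLTautIsMHM} only yields a \emph{complex} mixed Hodge module, and your explanation that integrality of $\beta_0$ enters through a ``Tate twist in the Radon realization'' is not quite the mechanism at play: the point is rather that for $\beta_0\in\dZ$ one has $\cO_{\dG_m}^{\beta_0}\cong\cO_{\dG_m}$ as $\cD$-modules (equivalently $\cN^{\beta_0}\cong p_2^+\cO_X$), so the input to the Fourier--Radon comparison is already a rational Hodge module. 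With this correction your argument goes through.
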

Before entering into the proof, we have to relate the Fourier-Laplace transformation entering in the definition
of $\cM$ to the Radon transformation of $\cD_{\dP(\widetilde{V})}$-modules, as has been done in \cite{Reich2}, \cite{ReiSe2}
as well as in \cite{SevCast}. We recall the necessary definitions.

\begin{definition}
Denote by $Z\subset\dP(\widetilde{V})\times \widetilde{V}^\vee$ the universal hyperplane given with equation $\sum_{i=0}^nw_i\lambda_i=0$ and by $U:=(\dP(\widetilde{V})\times \widetilde{V}^\vee)\setminus Z$ its complement. Consider the following diagram
$$
\xymatrix{ && U \ar[drr]^{\pi_2^U} \ar[dll]_{\pi_1^U} \ar@{^(->}[d]^{j_U}&& \\ \dP(\widetilde{V}) && \dP(\widetilde{V})\times \widetilde{V}^\vee \ar[ll]_{\pi_1} \ar[rr]^{\pi_2} && \widetilde{V}^\vee\; , \\ && Z \ar[ull]^{\pi_1^Z} \ar@{^(->}[u]_{i_Z} \ar[rru]_{\pi_2^Z} &&}
$$
The Radon transformations are functors from $D^b_{rh}(\cD_{\dP(\widetilde{V})})$ to $D^b_{rh}(\cD_{\widetilde{V}^\vee})$ given by
\begin{align}
\cR & := \pi^Z_{2,+}\pi_1^{Z,+}\cong\pi_{2,+}i_{Z,+}i_{Z}^+\pi_1^+, \notag \\
\cR^\circ & := \pi^U_{2,+}\pi_1^{U,+}\cong\pi_{2,+}j_{U,+}j^+_U\pi_1^+, \notag \\
\cR^\circ_c & :=\pi^U_{2,\dag}\pi_1^{U,+}\cong\pi_{2,+}j_{U,\dag}j^+_U\pi_1^+, \notag \\
\cR_{cst} & :=\pi_{2,+}\pi_1^+. \notag
\end{align}
\end{definition}

\begin{proof}[Proof of Theorem \ref{theo:TautSystMHM}]
Consider the following diagram,
where the dotted arrows denote functors on $\cD$-modules, not maps.
$$
\begin{tikzcd}
\dG_m\times X \ar[hookrightarrow]{r}{k\circ\iota} \ar{dd}{p_2} & \widetilde{V} \ar[, dotted]{rr}{\FL} & & \widetilde{V}^\vee\\
&&\widetilde{V} \backslash\{0\} \ar[hookrightarrow]{lu}{j_0} \ar{d}{\pi} \\
X \ar[hookrightarrow]{r}{g} & V \ar[hookrightarrow]{r}{j} & \dP(\widetilde{V}). \ar[swap, dotted]{uur}{\cR^\circ_c}
\end{tikzcd}
$$
It can be shown along the lines of \cite[Proposition 2.5, Lemma 2.6, Proposition 2.7]{Reich2} that for any $\beta_0\in \dZ$ we have the following isomorphism
in $D^b_{rh}(\cD_{\widetilde{V}^\vee})$
\begin{equation}\label{eq:FLRadon}
\cR^\circ_c((j\circ g)_+ \cO_X) \cong \FL((k\circ\iota)_+(\cO_{\dG_m}^{\beta_0}\boxtimes\cO_X)).
\end{equation}
In particular, since $\FL$ is exact, it shows that the left hand side is actually an element in $\textup{Mod}(\cD_{\widetilde{V}^\vee})$, i.e., that we have
$\cH^i(\cR^\circ_c((j\circ g)_+ \cO_X))=0$ for $i\neq 0$. Notice also that
for all $\beta_0\in \dZ$, we have an isomorphism $p_2^+\cO_X \cong \cO_{\dG_m}^{\beta_0}\boxtimes\cO_X$.

In particular, since the functors entering in the definition of $\cR_c^\circ$ exist at the level of
mixed Hodge modules, we obtain that the $\cD_{\widetilde{V}^\vee}$-module $\FL((k\circ\iota)_+(\cO_{\dG_m}^{\beta_0}\boxtimes\cO_X))$
underlies an object in $\MHM(\widetilde{V}^\vee)$ (notice that since $\beta_0\in \dZ$, we have
that $\cO_{\dG_m}^{\beta_0}$ is an actual Hodge module, i.e., such that its perverse sheaf is
defined over the rational numbers, and not just an element of $\MHM(\dG_m, \dC)$ as in the case
where $\beta_0$ is an arbitrary real number).

To finish the proof of the theorem, we now use Proposition \ref{prop:FLTautIsMHM}. As we assume that $\beta_0<c$, which implies in particular that $\beta_0\notin \cup_{k\geq 0} \left(k+n\cdot \{\textup{roots of }b_h(s)\}\right)$, we can conclude that
$$
\FL((k\circ\iota)_+(\cO_{\dG_m}^{\beta_0}\boxtimes\cO_X)) \cong
\FL(\check{\cM}(\widetilde{G},\widetilde{\rho},\widetilde{X},(\beta_0,0))) =
\cM(\widetilde{G},\widetilde{\rho},\widetilde{X},(\beta_0,0))
$$
which shows the statement of the theorem.
\end{proof}

\textbf{Remark: } As already stated in the introduction, Theorem \ref{theo:TautSystMHM} should be considered
as an analogue to \cite[Theorem 3.5.]{Reich2}, which treats the case
of GKZ-systems, i.e., where our group $G$ is a $d+1$-dimensional algebraic torus acting on an $n+1$-dimensional
affine space (noticed that \cite[Corollary 3.8]{SchulWalth2} plays a key role
in the proof of this latter result in the same way that Theorem \ref{theo:Invertibility} is needed to show Theorem \ref{theo:TautSystMHM}). In the paper \cite{ReiSe3}, this kind of result is pushed further by not only showing
that certain regular GKZ-systems underly mixed Hodge modules but proving that the associated Hodge filtration
is simply the induced filtration by orders of differential operators (up to a shift). One of the main
ingredients was the calculation of the certain $b$-function (or Bernstein-Sato polynomial) of the generator of the total Fourier-Laplace transform
(corresponding to the module $\check{\cM}$ in our notation) along the coordinate hyperplane $w_0$, which was achieved
using general estimations for such $b$-functions from \cite{ReichSevWalth}. In the present situation,
one would be much interested in a similiar result.

The first interesting example is the so-called $\star_3$-quiver (see \cite[Example 5.3.]{GMNS}), here the underlying graph
is of Dynkin type, and hence the corresponding linear free divisor satisfies the (SK) hypothesis.
A Macaulay2 calculation shows that
the $b$-function of the class of $1$ in the module $\check{\cM}(\widetilde{A_D},\widetilde{\rho},\widetilde{X},\widetilde{\beta})$,
i.e., the polynomial $b(s)$
such that
$$
b(w_0\partial_{w_0})\in V^1\cD_{\widetilde{V}} + \left(h(p)w_0^n-h,\delta_1,\ldots,\delta_{n-1},\widetilde{\chi}+9\right).
$$
has roots $-1,-3,-3,-3,-3,-5$ (notice that $\beta_0=-9$ is the largest integer satisfying the assumptions of Proposition \ref{prop:FLTautIsMHM}). This contrasts  \cite[Corollary 3.9]{ReichSevWalth}, which treats a similar question for  the case of GKZ-systems with normal toric rings, and
where all roots are contained in an interval of length smaller than one. The latter result is crucially used in the proof of \cite[Theorem 3.16]{ReiSe3}. Hence we cannot a priori conclude
that the Hodge filtration on $\check{\cM}(\widetilde{A_D},\widetilde{\rho},\widetilde{X},\widetilde{\beta})$
(and consequently the one on $\cM(\widetilde{A_D},\widetilde{\rho},\widetilde{X},\widetilde{\beta})$) is, up to a shift,
given by the order filtration on $\cD_{\widetilde{V}}$ (resp. the order filtration on $\cD_{\widetilde{V}^\vee}$).
On the other hand, as it has been already noticed in
the last remark of section \ref{sec:FreeDiv},
we can also study the ideal
$(w_0-h,\delta_1,\ldots,\delta_{n-1}, n\cdot \widetilde{\chi}-\beta_0)$ for suitable $\beta_0$.
If the analogue of Theorem \ref{theo:Invertibility}
holds for the quotient by this ideal, then it can be shown that it is nothing but the graph embedding
$i_{h,+} \cO_V(*D) h^\gamma$ (not of the module of meromorphic functions itself, but of the twisted version $\cO_V (*D)h^\gamma$ for some suitable $\gamma$). Notice also that for $\beta_0=0$ the module $\check{M}(\widetilde{A_D},\widetilde{\rho},\widetilde{X},(0,0))$ is then obtained as a pullback under a cyclic cover of such a  direct image under the graph embedding.

The roots of the element $[1]$ of this module
are simply shifts of the roots of $b_h$ itself.
This means that in case where these roots are contained
in an interval of length $<1$ (like in the case of $\star_3$, where they are $-4/3,-1,-1,-1,-1,-2/3$),
we may actually be able to detect the Hodge filtration on the graph embedding module. This is closely related to the general problem of how the Hodge filtration behaves on the module $\cO_V(*D)$,
a question that has raised much attention over the last years in the context of birational geometry, see, e.g.
\cite{PopaMustata, PopaICMTalk}.

\section{Hyperplane sections and Gau\ss-Manin systems}
\label{sec:DimReduc}

In this section we discuss the relation of the tautological system $\cM=\cM(\widetilde{G},\widetilde{\rho},\widetilde{X},(\beta_0,0))$
(where $G=A_D$) to the Gau\ss-Manin system of the universal family of hyperplane sections of a Milnor fibre of $D$. This family is the hypothetical Landau-Ginzburg potential
for a (yet to be found) non-toric A-model. Both the tautological and this Gau\ss-Manin system are regular holonomic
$\cD_{\widetilde{V}^\vee}$-modules (and actually underly, using the results of the last section, objects in $\MHM(\widetilde{V}^\vee)$).
We first show that they are are equal up to smooth $\cD_{\widetilde{V}^\vee}$-modules. In a second step, we consider the dimensional reduction briefly discussed in the introduction. It consists in applying a direct image under a morphism from $\widetilde{V}^\vee$ to
$\dA^2$ given by the identity on the first component and the equation of the dual divisor $D^\vee$ as the second component. We obtain a reduced system that has been intensively studied in \cite{dGMS} using algorithmic methods.

We start with the following statement, which is a direct consequence
of the corresponding results in the toric case, as worked out in details in
\cite{Reich2} and \cite{ReiSe,ReiSe2}. Let $c\in \dZ$ be the constant from
formula \eqref{eq:Constant_c}.
\begin{proposition}\label{prop:4TermSequence}
Let $D\subset V$ be a linear free divisor with defining equation $h$
and suppose that $D$ satisfies the (SK) condition. Let $X=h^{-1}(h(p))$, where $p\in V\backslash D$ is a chosen point.
Let $\textup{can}:V\times V^\vee \rightarrow \dA^1_{\lambda_0}$, $(w,\lambda)\mapsto \sum_{i=1}^{n} w_i\lambda_i$
be the canonical pairing. Consider again the closed embedding $g:X\hookrightarrow V$ from above
(see Lemma \ref{lem:OrbitClosed}) and let $\varphi$ be the composition
$$
\varphi=\left(\textup{can}\circ(g,\id_{V^\vee}),\textup{pr}_2\right): X\times V^\vee \longrightarrow \dA^1_{\lambda_0}\times V^\vee \cong \widetilde{V}^\vee.
$$
Then for all $\beta_0\in\dZ$ with $\beta_0 < c$ there is an exact sequence in
$\textup{Mod}(\cD_{\widetilde{V}^\vee})$
$$
0 \longrightarrow H^{n-2}(X,\dC)\otimes_\dC\cO_{\widetilde{V}^\vee}\longrightarrow
\cH^0 \varphi_+ \cO_{X\times V^\vee} \longrightarrow \cM(\widetilde{G},\widetilde{\rho},\widetilde{X},(\beta_0,0))
\longrightarrow H^{n-1}(X,\dC)\otimes_\dC\cO_{\widetilde{V}^\vee}\longrightarrow 0,
$$
where the left- resp. rightmost term are free $\cO_{\widetilde{V}^\vee}$-modules with the trivial
connection (having $H^{n-2}(X,\dC)$ resp. $H^{n-1}(X,\dC)$ as flat sections).
\end{proposition}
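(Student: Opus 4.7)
The plan is to derive the four-term sequence from the Radon transformation formalism, adapting the strategy used in the GKZ case in \cite{Reich2,ReiSe,ReiSe2}. The starting point is the identification $\cM \cong \cH^0\cR^\circ_c((j\circ g)_+\cO_X)$ together with the vanishing $\cH^i\cR^\circ_c((j\circ g)_+\cO_X)=0$ for $i\neq 0$, both of which follow from equation \eqref{eq:FLRadon} and the exactness of $\FL$ on holonomic modules, under the hypothesis $\beta_0<c$ used in Theorem \ref{theo:TautSystMHM}.

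The next step is to identify $\cH^0\varphi_+\cO_{X\times V^\vee}$ with a cohomology sheaf of the closed Radon transform $\cR((j\circ g)_+\cO_X)$. On the affine chart $V=\{w_0=1\}\subset \dP(\widetilde V)$, the universal hyperplane $Z$ is cut out by $\lambda_0+\sum_{i=1}^n w_i\lambda_i=0$, so $Z\cap (X\times\widetilde V^\vee)$ is precisely the graph of $-\textup{can}\circ(g,\id_{V^\vee})$; restricting $\pi_2^Z$ to this graph yields $\varphi$ up to the sign of $\lambda_0$ (a $\cD$-module automorphism of $\widetilde V^\vee$). Hence $\cR((j\circ g)_+\cO_X)\cong \varphi_+\cO_{X\times V^\vee}$ up to a cohomological shift. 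In parallel,
$$
\cR_{cst}((j\circ g)_+\cO_X)=\pi_{2,+}\pi_1^+(j\circ g)_+\cO_X \cong \bigoplus_i H^i(X,\dC)\otimes_\dC\cO_{\widetilde V^\vee}
$$
with trivial connection (up to shifts), since $\pi_{2,+}\pi_1^+$ computes the de Rham cohomology of the first factor tensored with $\cO_{\widetilde V^\vee}$ and $(j\circ g)_+$ preserves cohomology. Because $X$ is smooth affine of dimension $n-1$, Artin vanishing restricts the potentially surviving groups in the two relevant degrees to $H^{n-2}(X,\dC)$ and $H^{n-1}(X,\dC)$.

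The four-term sequence then emerges as a segment of the long exact cohomology sequence of the standard distinguished triangle
$$
\cR^\circ_c(M)\to \cR_{cst}(M)\to \cR(M)\to \cR^\circ_c(M)[1]
$$
(arising from $j_{U,!}j_U^+\to\id\to i_{Z,+}i_Z^+$ pushed forward by $\pi_{2,+}\pi_1^+$) with $M=(j\circ g)_+\cO_X$: the concentration of $\cR^\circ_c(M)$ in a single cohomological degree, together with that of $\cR(M)\cong \varphi_+\cO_{X\times V^\vee}$ in the unique degree corresponding to $\cH^0\varphi_+$, collapses the LES to exactly the displayed four-term exact sequence, with flanking terms supplied by the two surviving pieces of $\cR_{cst}(M)$. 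The main obstacle is the precise bookkeeping of cohomological shifts across $\cR^\circ_c$, $\cR_{cst}$, $\cR$ and in the identification of $\cR(M)$ with $\varphi_+\cO_{X\times V^\vee}$; once these shifts are pinned down, only the top two cohomology groups of $X$ can appear as flanking terms, and the triviality of the connection on them is immediate from the explicit form of $\cR_{cst}$.
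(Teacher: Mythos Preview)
Your approach is the same as the paper's, and the ingredients are all correct except for one claim. You assert that $\cR(M)\cong\varphi_+\cO_{X\times V^\vee}$ is concentrated in a single cohomological degree; this is false. The morphism $\varphi$ has relative dimension $n-2$, and $\varphi_+\cO_{X\times V^\vee}$ typically has nonzero cohomology in every degree from $-(n-2)$ to $0$. Concentration is, however, not needed. The triangle the paper uses carries a shift you omitted: it reads $\cR^\circ_c(M)\to\cR_{cst}(M)\to\cR(M)[+1]\stackrel{+1}{\to}$, and the relevant segment of its long exact sequence is
\[
\cH^{-1}\cR^\circ_c(M)\to\cH^{-1}\cR_{cst}(M)\to\cH^0\cR(M)\to\cH^0\cR^\circ_c(M)\to\cH^0\cR_{cst}(M)\to\cH^1\cR(M).
\]
To truncate this to the desired four-term sequence one needs only $\cH^{-1}\cR^\circ_c(M)=0$ (which you have) and $\cH^1\cR(M)=0$. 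The latter follows from $\varphi_+\cO_{X\times V^\vee}\in D^{\leq 0}_{rh}$, which holds because $\varphi$ is an affine morphism and hence $\varphi_+$ is right $t$-exact. Your appeal to Artin vanishing is also slightly misplaced: the fact that only $H^{n-2}(X,\dC)$ and $H^{n-1}(X,\dC)$ appear is not a vanishing statement for the other cohomology groups of $X$, but simply the identification $\cH^i\cR_{cst}(M)\cong H^{n-1-i}(X,\dC)\otimes\cO_{\widetilde V^\vee}$ evaluated at $i=-1,0$.
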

\begin{proof}
From the definition of the various Radon transformation functors and the adjunction triangle for the embeddings
$Z\hookrightarrow \dP(\widetilde{V})\times \dP(\widetilde{V}^\vee)$ and $U\hookrightarrow\dP(\widetilde{V})\times \dP(\widetilde{V}^\vee)$
we obtain exact triangles
$$
\begin{array}{cccccc}
\cR(M)[-1] & \longrightarrow & \cR_{cst}(M) & \longrightarrow & \cR^\circ(M) & \stackrel{+1}{\longrightarrow} \\
\cR^\circ_c(M) & \longrightarrow & \cR_{cst}(M) & \longrightarrow & \cR(M)[+1] & \stackrel{+1}{\longrightarrow}
\end{array}
$$
for any $M\in D^b_{rh}(\cD_{\dP(\widetilde{V})})$ (where the second is dual to the first), see \cite[Proposition 2.4]{Reich2} for details.
Recall (see the discussion after formula \eqref{eq:FLRadon})
that we have $\cH^i\cR^\circ_c((j\circ g)_+\cO_X) =0$ for $i\neq 0$.
Moreover, it can be shown as in \cite[Proposition 2.7]{Reich2} that
$$
\cR((j\circ g)_+\cO_X)\cong \varphi_+\cO_{X\times V},
$$
and since we have $\varphi_+\cO_{X\times V}\in D_{rh}^{\leq 0}(\cD_{\widetilde{V}^\vee})$, we obtain
$\cH^1(\cR((j\circ g)_+)\cO_X)=0$. This implies that the
second triangle yields an exact sequence
$$
0 \longrightarrow \cH^{-1} \cR_{cst}((j\circ g)\cO_X)
  \longrightarrow \cH^0 \cR ((j\circ g)\cO_X)
  \longrightarrow \cH^0 \cR^\circ_c((j\circ g)\cO_X)
  \longrightarrow \cH^0 \cR_{cst}((j\circ g)\cO_X)
  \longrightarrow 0.
$$
Similarly to the proof of \cite[Theorem 2.1]{Reich2}, it can be shown that  $\cH^i \cR_{cst}((j\circ g)\cO_X)=
H^{n-1-i}(X,\dC)\otimes_\dC\cO_{\widetilde{V}^\vee}$ for $i=-1,0$. Moreover, we have seen above
that
$$
\begin{array}{c}
\cH^0 \cR^\circ_c((j\circ g)\cO_X) \cong \cH^0\FL((k\circ \iota)_+\cO_{\dG_m\times X})
\cong \cH^0\FL((k\circ \iota)_+\cO_{\dG_m}^{\beta_0}\boxtimes \cO_X) \\ \\
=\cH^0\FL(\check{\cM}(\widetilde{G},\widetilde{\rho},\widetilde{X},(\beta_0,0)))
=\cM(\widetilde{G},\widetilde{\rho},\widetilde{X},(\beta_0,0)),
\end{array}
$$
as required.
\end{proof}
Similarly to \cite[Proposition 3.1, Proposition 3.3.]{Reich2} it follows that this sequence
can be read in the category $\MHM(\widetilde{V}^\vee)$, where appropriate versions of the
Radon transformation functors can be defined.
We obtain the following consequence for the partial Fourier transformation
of the two (non trivial) $\cD$-modules in the above sequence.
\begin{corollary}\label{cor:IsoAfterLocFL}
For $\beta_0\in(-\infty,c)\cap \dZ$ we have
an isomorphism of $\cD_{\dA^1_z\times V^\vee}$-modules
$$
\FL^{\textup{loc}}_{V^\vee}(\cH^0 \varphi_+ \cO_{X\times V^\vee}) \cong \FL_{V^\vee}^{\textup{loc}}\cM(\widetilde{G},\widetilde{\rho},\widetilde{X},(\beta_0,0)).
$$
\end{corollary}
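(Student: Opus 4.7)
The plan is to deduce the isomorphism directly from the four-term exact sequence in Proposition \ref{prop:4TermSequence} by applying the exact functor $\FL^{\textup{loc}}_{V^\vee}$, relying on the fact that this functor annihilates the two extreme terms of the sequence. Since both $j_\tau^+$ and $j_{z+}$ are exact (restriction to an open subvariety and direct image under an open embedding), and since $\FL_{V^\vee}$ is exact as noted in Definition \ref{def:FL}, the composition $\FL^{\textup{loc}}_{V^\vee}$ is an exact functor from $\textup{Mod}(\cD_{\widetilde{V}^\vee})$ to $\textup{Mod}(\cD_{\dA^1_z \times V^\vee})$.

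First, I would break the four-term sequence from Proposition \ref{prop:4TermSequence} into two short exact sequences by introducing the image $K$ of the middle arrow $\cH^0 \varphi_+ \cO_{X \times V^\vee} \to \cM(\widetilde{G},\widetilde{\rho},\widetilde{X},(\beta_0,0))$:
$$
0 \longrightarrow H^{n-2}(X,\dC) \otimes_\dC \cO_{\widetilde{V}^\vee} \longrightarrow \cH^0 \varphi_+ \cO_{X\times V^\vee} \longrightarrow K \longrightarrow 0,
$$
$$
0 \longrightarrow K \longrightarrow \cM(\widetilde{G},\widetilde{\rho},\widetilde{X},(\beta_0,0)) \longrightarrow H^{n-1}(X,\dC) \otimes_\dC \cO_{\widetilde{V}^\vee} \longrightarrow 0.
$$

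The key observation is that $\FL^{\textup{loc}}_{V^\vee}(\cO_{\widetilde{V}^\vee}) = 0$, and hence the same vanishing holds after tensoring over $\dC$ with any finite dimensional $\dC$-vector space. To check this, I would use that $\FL_{V^\vee}$ is the partial Fourier-Laplace along $\lambda_0$ in $\widetilde{V}^\vee = \dA^1_{\lambda_0} \times V^\vee$, so it decomposes as an exterior product and gives
$$
\FL_{V^\vee}(\cO_{\widetilde{V}^\vee}) \cong \FL(\cO_{\dA^1_{\lambda_0}}) \boxtimes \cO_{V^\vee} \cong \bigl(\cD_{\dA^1_\tau}/\cD_{\dA^1_\tau}\tau\bigr) \boxtimes \cO_{V^\vee},
$$
using that $\cO_{\dA^1_{\lambda_0}} = \cD_{\dA^1_{\lambda_0}}/\cD_{\dA^1_{\lambda_0}}\partial_{\lambda_0}$ and that Fourier-Laplace sends $\partial_{\lambda_0}$ to $\tau$. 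This module is supported on $\{\tau = 0\}\times V^\vee$, so its restriction under $j_\tau : \dG_{m,\tau}\times V^\vee \hookrightarrow \dA^1_\tau \times V^\vee$ vanishes, and therefore its $j_{z+}$ pushforward vanishes as well.

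Applying $\FL^{\textup{loc}}_{V^\vee}$ to the two short exact sequences above, the exactness of the functor together with the vanishing just established yields isomorphisms
$$
\FL^{\textup{loc}}_{V^\vee}(\cH^0 \varphi_+ \cO_{X\times V^\vee}) \cong \FL^{\textup{loc}}_{V^\vee}(K) \cong \FL^{\textup{loc}}_{V^\vee}\cM(\widetilde{G},\widetilde{\rho},\widetilde{X},(\beta_0,0)),
$$
which is exactly the claim. There is no serious obstacle here, since both ingredients (the four-term sequence of Proposition \ref{prop:4TermSequence} and the annihilation of trivial connections in the $\lambda_0$-direction by $\FL^{\textup{loc}}_{V^\vee}$) are either already established or essentially formal; the only mild technicality is tracking that the bookkeeping of the intermediate module $K$ leads to a genuine isomorphism rather than merely an equality in a derived sense.
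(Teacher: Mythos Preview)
Your proof is correct and follows the same approach as the paper: apply the exact functor $\FL^{\textup{loc}}_{V^\vee}$ to the four-term sequence of Proposition~\ref{prop:4TermSequence} and observe that it annihilates the two outer $\cO_{\widetilde{V}^\vee}$-free terms. The paper states this in a single sentence without justifying the vanishing, whereas you have spelled out explicitly why $\FL^{\textup{loc}}_{V^\vee}(\cO_{\widetilde{V}^\vee})=0$ via the support of $\FL(\cO_{\dA^1_{\lambda_0}})$ at $\tau=0$; this added detail is correct and helpful.
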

\begin{proof}
The functor $\FL_{V^\vee}^{\textup{loc}}$ is exact and kills
kernel and cokernel of the map
$$
\cH^0 \varphi_+ \cO_{X\times V^\vee} \longrightarrow \cM(\widetilde{G},\widetilde{\rho},\widetilde{X},(\beta_0,0))
$$
since these are are $\cO_{\widetilde{V}^\vee}$-locally free. This yields the statement of the corollary.
\end{proof}
\textbf{Remark:} Notice that it follows from our main result (Theorem \ref{theo:TautSystMHM}) that the partial Fourier transform
$\FL_{V^\vee}^{\textup{loc}}\cM(\widetilde{G},\widetilde{\rho},\widetilde{X},(\beta_0,0))$ underlies an irregular Hodge module
in the sense of \cite{Sa15}. However, since we do not have control over the Hodge filtration of $\cM(\widetilde{G},\widetilde{\rho},\widetilde{X},(\beta_0,0))$
for the moment, this structure cannot yet be entirely described.\\

Next we are going to consider the dimensional reduction of the tautological system $\cM(\widetilde{G}, \widetilde{\rho},\widetilde{X},(\beta_0,0))$.
As has been explained in the introduction, the main motivation to consider this operation is that it is parallel
to the reduction process from a GKZ-system to a classical
hypergeometric module that is considered in toric mirror symmetry (see, e.g. \cite[Section 3.1]{ReiSe} and \cite[Section 6]{ReiSe2}). As an example (which is covered
by the present case of a linear free divisor satisfying the (SK) condition but which is also of toric nature, i.e. which is a reduction of a GKZ-system to a classical hypergeometric module), consider the case where $D$ is the normal crossing divisor with $n$ components (the easiest example of a linear free divisor). Then the tautological system is a GKZ-system, more precisely, we have
$$
\cM(\widetilde{G}, \widetilde{\rho},\widetilde{X},(\beta_0,0))\cong
\frac{\cD_{\widetilde{V}}}{\left(\partial_{\lambda_0}^n-\prod_{i=1}^n
\partial_{\lambda_i}, \sum_{i=0}^n   \lambda_i\partial_{\lambda_i}+(n+1)+\beta_0,\left(\lambda_1\partial_{\lambda_1}-\lambda_i\partial_{\lambda_i}\right)_{i=2,\ldots,n}\right)}.
$$
We have the dual divisor  $D^\vee=\left\{h^\vee=\lambda_1\cdot\ldots\cdot\lambda_n=0\right\}$, and we can consider the morphism $\kappa:\dA^1_{\lambda_0}\times\dA^1_t\hookrightarrow \widetilde{V}$ given by $(\lambda_0,t)\mapsto (\lambda_0,t,1,\ldots,1)$. Then one calculates directly that the (non-characteristic) inverse image by $\kappa$ of
the localized GKZ-system is given as
\begin{equation}\label{eq:InverseImageReduction}
\kappa^+\left[\cM(\widetilde{G}, \widetilde{\rho},\widetilde{X},(\beta_0,0))\otimes_{\cO_{\widetilde{V}}} \cO_{\widetilde{V}^\vee}(*(\dA^1_{\lambda_0}\times D^\vee))\right]
\cong
\frac{\cD_{\dA^1_{\lambda_0\times\dA^1_t}}}{(t\partial_{\lambda_0}^n-(t\partial_t)^n,\lambda_0\partial_{\lambda_0}+nt\partial_t+(n+1)+\beta_0)}
\end{equation}
which corresponds, after a partial Fourier-Laplace transformation relative
to the parameter space $\dG_{m,t}$ to the quantum differential equations for the projective space $\dP^{n-1}$. The results below generalize this example
to the case of an arbitrary linear free divisor satisfying the (SK) condition. However, we will
consider a direct image to $\dA^1_{\lambda_0}\times \dA^1_t$ instead of the inverse image by $\kappa$ as above.

Consider again the equation $h^\vee$ of the dual divisor $D^\vee$,
seen as a morphism $h^\vee: V^\vee \rightarrow \dA^1_t$.
Let $\phi:=(\id_{\dA^1_{\lambda_0}},h^\vee):\widetilde{V}^\vee\rightarrow \dA^1_{\lambda_0}\times \dA^1_t$.
Then we have the following statement.
\begin{proposition}\label{prop:ReducTautSystem}
For any $\beta_0\in \dR$, write $\cM(*D^\vee)$ for the localization
$$
\cM(\widetilde{G},\widetilde{\rho},\widetilde{X},(\beta_0,0))\otimes_{\cO_{\widetilde{V}^\vee}} \cO_{\widetilde{V}^\vee}(*(\dA^1_{\lambda_0}\times D^\vee)).
$$
Then we have an isomorphism of $\cD_{\dA^1_{\lambda_0}\times \dG_{m,t}}$-modules
$$
\cH^0\phi_+(\cM(*D^\vee))
\cong \frac{\cD_{\dA^1_{\lambda_0}\times\dA^1_t}[t^{-1}]}{(\lambda_0\partial_{\lambda_0}+nt\partial_t+(n+1)+\beta_0, h(p)\cdot t\cdot\partial^n_{\lambda_0}- b_h(t\partial_t))}.
$$
\end{proposition}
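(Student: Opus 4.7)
The plan is to compute $\cH^0\phi_+\cM(*D^\vee)$ directly from the presentation of $\cM$ given in Lemma~\ref{lem:CalculationTautSystem}. I would factor $\phi = \pi \circ \iota_\Gamma$, where $\iota_\Gamma: \widetilde{V}^\vee \hookrightarrow \widetilde{V}^\vee \times \dA^1_t$ is the graph embedding of $h^\vee$ extended by the identity on $\dA^1_{\lambda_0}$, i.e.\ $(\lambda_0,\lambda_1,\ldots,\lambda_n) \mapsto (\lambda_0,\lambda_1,\ldots,\lambda_n, h^\vee(\lambda))$, and $\pi: \widetilde V^\vee \times \dA^1_t \to \dA^1_{\lambda_0} \times \dA^1_t$ is the projection killing the fibre variables $\lambda_1,\ldots,\lambda_n$. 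Since we have localized at $D^\vee$, we work throughout over $\dG_{m,t}$, where $t = h^\vee$ is invertible. The direct image can then be described via the standard explicit formula for $\iota_{\Gamma,+}$ together with the relative de Rham complex computing $\pi_+$.

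I would then trace how the three types of generators of the defining ideal of $\cM(*D^\vee)$ transform. The logarithmic vector fields $\delta_1^\vee,\ldots,\delta_{n-1}^\vee \in \Der(-\log h^\vee)$ are tangent to the fibres of $h^\vee$ by construction, hence lie in the kernel of $d\phi$; the associated relations are absorbed into the fibrewise de~Rham differential and become trivial on the direct image. The Euler relation $\widetilde\chi^\vee + (n+1) + \beta_0 = 0$ becomes $\lambda_0\partial_{\lambda_0} + n t\partial_t + (n+1) + \beta_0 = 0$: by the quasi-homogeneity $\chi^\vee(h^\vee) = n \cdot h^\vee$ of $h^\vee$ (of degree $n$), the operator $\chi^\vee = \sum_{i=1}^n \lambda_i\partial_{\lambda_i}$ acts on pull-backs from $\dA^1_t$ as $n\, t\partial_t$.

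The crucial step is to translate the relation $B = h(p)\partial_{\lambda_0}^n - h(\partial_\lambda)$ into the second generator of the target module. The key input is the Sato--Bernstein functional equation for the reductive prehomogeneous vector space $(G_D, V)$,
$$
h(\partial_\lambda) \cdot (h^\vee)^{s+1} = b_h(s) \cdot (h^\vee)^s,
$$
valid in $\cD_{V^\vee}[s] \cdot (h^\vee)^s$; this is classical for reductive prehomogeneous vector spaces and reflects the fact that the $b$-function of the prehomogeneous space equals the Bernstein--Sato polynomial of the discriminant $h$. Multiplying the relation $B \cdot [1] = 0$ by $t = h^\vee$ and using the identity to rewrite $t \cdot h(\partial_\lambda)$ as $b_h(t\partial_t)$ (modulo the $\delta_i^\vee$, which are already trivial on the direct image) yields $h(p)\, t\, \partial_{\lambda_0}^n\, [1] = b_h(t\partial_t)\, [1]$, exactly the second generator of the target module.

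These observations together provide a surjective morphism from the right-hand side onto $\cH^0\phi_+\cM(*D^\vee)$. Injectivity can be established by comparing generic ranks of both sides as holonomic $\cD_{\dA^1_{\lambda_0} \times \dG_{m,t}}$-modules, or by constructing an explicit inverse on the dense open where $\lambda_0 \neq 0$. The main obstacle is the Sato--Bernstein identity with the correct shift: while the existence of such an identity is classical when $G_D$ is reductive, pinning down the precise polynomial $b_h(t\partial_t)$ (and carrying out the manipulation cleanly modulo the $\delta_i^\vee$) requires the Spencer-complex machinery of Section~\ref{sec:FreeDiv}, in particular property~(b) of the (SK) hypothesis, which controls the action of $b_h(s)$ on the $\cD_V[s]$-module $\cD_V[s] h^s / \cD_V[s] h^{s+1}$.
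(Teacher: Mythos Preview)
Your outline tracks the paper's strategy in broad strokes---compute the direct image from the explicit presentation, handle the three families of relations separately, and invoke the Bernstein functional equation for $h(\partial_\lambda)$---but it skips over the step that carries most of the weight.

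The real issue is your treatment of the $\delta_i^\vee$. Saying they ``lie in the kernel of $d\phi$'' and that the relations are ``absorbed into the fibrewise de~Rham differential'' is true but not sufficient: it tells you that the image of the relative de~Rham map is contained in what you quotient out, but it does \emph{not} tell you what the cokernel actually is. The paper needs (and proves) the much sharper fact that
\[
\frac{\cO_{V^\vee}(*D^\vee)}{\sum_{i=1}^{n-1}\delta_i^\vee\bigl(\cO_{V^\vee}(*D^\vee)\bigr)}
\;\cong\;(h^\vee)^{-1}\cO_{\dG_{m,t}},
\]
i.e.\ that the top relative de~Rham cohomology of $h^\vee$ (off $D^\vee$) is free of rank one. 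This is a genuine topological input: each smooth fibre of $h^\vee$ is an $A_D$-orbit with finite stabilizer, and since $A_D$ is reductive and connected it retracts onto a compact connected real Lie group, so $H^{n-1}$ of the fibre is one-dimensional. Without this, you do not know that $\cH^0\phi_+\cM(*D^\vee)$ is even cyclic over $\cD_{\dA^1_{\lambda_0}\times\dG_{m,t}}$, and your ``surjection from the right-hand side'' is not well-defined. Your proposed injectivity check by comparing generic ranks is then circular: computing the generic rank of the direct image is exactly the missing cohomological calculation.

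Two smaller points. First, the paper passes to right modules, computes $\cH^0$ of the transfer-module complex, and only at the end converts back to left modules via a transpose computation; this last step uses the symmetry of the roots of $B_{h^\vee}$ around zero (a feature of the Sato $b$-function for reductive prehomogeneous spaces) and is where the operator $t^{-1}b_h(t\partial_t)$ finally appears. Your left-module sketch would have to reproduce this somewhere. Second, your closing remark that pinning down the Bernstein identity ``requires the Spencer-complex machinery of Section~\ref{sec:FreeDiv}'' is off target: the proof of this proposition does not use (SK) or Spencer complexes at all---the functional equation $h(\partial_\lambda)(h^\vee)^s=b_{h^\vee}(s)(h^\vee)^{s-1}$ is the classical Sato identity for a reductive prehomogeneous vector space, and the restriction to $\phi^{-1}\cO_{\dA^1_{\lambda_0}\times\dA^1_t}$ is handled by the elementary Lemma~\ref{lem:RestrDiffOp}.
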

Before starting the proof, we state the following preliminary lemma.
\begin{lemma}\label{lem:RestrDiffOp}
Let $X=\Spec(R)$, $Y=\Spec(T)$ two smooth affine algebraic varieties over $\dC$ and $g:X\rightarrow Y$ a surjective morphism
yielding an injective ring homomorphism $T\hookrightarrow R$. Consider the rings
of differential operators $D_R=\Gamma(X,\cD_X)$, $D_T=\Gamma(Y,\cD_T)$.\\
Let $P\in D_R$ be given, and suppose
that for all elements $t\in T$, we have $P(t)\in T$, where we see $P$ as an element of $End_\dC(R)$. Then $P$ yields
an element of $D_T$, that is, there exists an element $D_T$ which we denote by $P_{|T}$ such that for all $t\in T$ we have
$P(t)=P_{|T}(t)$. The order of $P_{|T}$ is smaller than or equal to the order of $P$.
\end{lemma}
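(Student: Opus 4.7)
My plan is to prove the lemma by induction on the order of $P$, using the Grothendieck/EGA characterization of differential operators on an affine commutative $\dC$-algebra $A$: the ring $D_A$ equals the union $\bigcup_k F_k D_A$, where $F_0 D_A = A$ (acting by multiplication) and
\[
F_k D_A = \{Q \in \mathrm{End}_\dC(A) : [Q,a] \in F_{k-1} D_A \text{ for all } a \in A\}.
\]
Since $X$ and $Y$ are smooth over $\dC$, this inductive/commutator description agrees with the usual sheaves of algebraic differential operators on $X$ and $Y$, so it suffices to show that $P_{|T}$, defined unambiguously by $t \mapsto P(t)$, lies in $F_k D_T$ whenever $P \in F_k D_R$.

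For the base case $k=0$, $P$ is multiplication by some $r \in R$, and the hypothesis $P(1) \in T$ forces $r \in T$, so $P_{|T}$ is multiplication by an element of $T$ and hence belongs to $F_0 D_T$. For the inductive step, assume the statement for orders $<k$ and let $P \in F_k D_R$ send $T$ into $T$. For any $t \in T \subset R$, consider the commutator $[P,t] \in F_{k-1} D_R$. For every $t' \in T$ one has
\[
[P,t](t') = P(tt') - t\, P(t') \in T,
\]
because $tt' \in T$ and $P$ sends $T$ into $T$ by hypothesis. Thus $[P,t]$ satisfies the hypothesis of the lemma at order $k-1$, so by induction $[P,t]_{|T}$ lies in $F_{k-1} D_T$. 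Since the restriction operation is compatible with commutators (as $[P_{|T}, t] = [P,t]_{|T}$ when acting on $T$), we conclude that $[P_{|T}, t] \in F_{k-1} D_T$ for every $t \in T$, and hence $P_{|T} \in F_k D_T$ by the inductive characterization of $D_T$.

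This also yields the order bound $\ord(P_{|T}) \leq \ord(P)$ automatically, since the argument inserts $P_{|T}$ into $F_{\ord(P)} D_T$. I do not foresee a genuine obstacle: the only subtlety is to observe that the surjectivity of $g$ (equivalently, the injectivity of $T \hookrightarrow R$) makes the assignment $P_{|T}: t \mapsto P(t)$ well defined as a $\dC$-linear endomorphism of $T$, and that the condition ``$P(T) \subset T$'' is automatically inherited by all iterated commutators $[[\cdots[P,t_1],\ldots],t_j]$ with $t_i \in T$, which is exactly what keeps the induction closed.
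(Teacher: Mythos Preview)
Your proof is correct and follows essentially the same approach as the paper: the paper's argument is a one-line sketch invoking Grothendieck's inductive definition of differential operators and induction on the order, and you have simply filled in the commutator computation that makes the inductive step work. The only cosmetic difference is that the paper mentions vector fields as part of the base case, whereas your induction starts at order zero and absorbs vector fields into the step $k=1$; this changes nothing.
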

\begin{proof}
This is elementary using Grothendiecks definition of $D_R$ resp. $D_T$, namely, the statement is obvious
if $P$ is a function (i.e., an element of $R$) or a vector field (i.e., an element of $\Der_\dC(R,R)$), and then one argues by induction on the degree of $P$.
\end{proof}
\begin{proof}[Proof of the proposition]
First note that according to the second statement of Lemma \ref{lem:CalculationTautSystem}, we have the following explicit expression of $\cM(*D^\vee)$:
\begin{equation}\label{eq:1}
\cM(*D^\vee) =
\frac{\cD_{\widetilde{V}^\vee}(*(\dA^1_{\lambda_0}\times D^\vee))}{\left(h(p)\partial_{\lambda_0}^n-h(\partial_{\lambda_1},\ldots,\partial_{\lambda_1}),
\delta^\vee_1,\ldots,\delta^\vee_{n-1},\sum_{i=0}^{n} \lambda_i\partial_{\lambda_i}+(n+1)+\beta_0\right)}
\end{equation}
where $\delta^\vee_1,\ldots,\delta^\vee_{n-1}$ is a basis of the module
$\Der(-\textup{log}(D^\vee))$ of vector fields on $V^\vee$ annihilating the equation $h^\vee$
of the dual divisor $D^\vee$ of $D$. Write more explicitely
$$
\delta^\vee_i = \sum_{j,k=1}^n \alpha^{(i)}_{jk} \lambda_j \partial_{\lambda_k}.
$$
for some $\alpha^{(i)}_{jk}\in \dC$.
Put $\cD:=\cD_{\widetilde{V}^\vee}(*(\dA^1_{\lambda_0}\times D^\vee))$ and consider the
right $\cD$-module $[\cM(*D^\vee)]^{\textup{right}}$ associated to $\cM(*D^\vee)$, which is given by
$\cD/(P_0,P_1,\ldots,P_n)\cD$, where
$$
P_0=h(p)\partial_{\lambda_0}^n-h(\partial_{\lambda_1},\ldots,\partial_{\lambda_n}), (P_i=\sum_{j,k=1}^n \alpha^{(i)}_{jk} \partial_{\lambda_k}\lambda_j )_{i=1,\ldots,n-1}, P_n=\sum_{i=0}^{n} \lambda_i\partial_{\lambda_i}-\beta_0.
$$
Notice that we have for all $i\in\{1,\ldots,n-1\}$ that
$$
P_i=\sum_{j,k=1}^n \alpha^{(i)}_{jk} \partial_{\lambda_k}\lambda_j
=\sum_{j,k=1}^n \alpha^{(i)}_{jk} \lambda_j\partial_{\lambda_k}+\textup{Trace}(\alpha^{(i)}_{jk} )
=\sum_{j,k=1}^n \alpha^{(i)}_{jk} \lambda_j\partial_{\lambda_k}=\delta_i^\vee
$$
since $\textup{Trace}(\alpha^{(i)}_{jk} )=0$ as  reductive linear free divisors are \emph{special} in the sense of \cite[Definition 2.1]{dGMS}.

Chose a $\cD$-free resolution $\cF^\bullet$ by right $\cD$-modules, i.e. an exact sequence
$$
\begin{tikzcd}
\ldots \ar{r} & \cD^{n+1} \ar{rr}{(P_0\cdot,\ldots,P_n\cdot)}&& \cD \ar{r} & \cM(*D^\vee) \ar{r}
& 0.
\end{tikzcd}
$$
Now consider the transfer module
$$
\cO_{\widetilde{V}^\vee}(*(\dA^1_{\lambda_0}\times D^\vee))
\otimes_{\phi^{-1}\cO_{\dA^1_{\lambda_0}\times \dA^1_t}[t^{-1}]} \phi^{-1}\cD_{\dA^1_{\lambda_0}\times \dA^1_t}[t^{-1}]
$$
which we abbreviate
by $\cD_\rightarrow$.
Recall that the left $\cD_{\widetilde{V}^\vee}(*(\dA^1_{\lambda_0}\times D^\vee))$-module structure on $\cD_\rightarrow$ is given as follows:
Interpret a section $g\otimes Q \in \cD_\rightarrow$ as a differential operator
from $\phi^{-1}\cO_{\dA^1_{\lambda_0}\times \dA^1_t}[t^{-1}]$ to $\cO_{\widetilde{V}^\vee}(*(\dA^1_{\lambda_0}\times D^\vee))$ sending $k\in\phi^{-1}\cO_{\dA^1_{\lambda_0}\times \dA^1_t}[t^{-1}]$ to
$g\cdot (\phi^*(Q(k)))$ (where $\phi^*:\phi^{-1}\cO_{\dA^1_{\lambda_0}\times \dA^1_t}[t^{-1}] \rightarrow
\cO_{\widetilde{V}^\vee}(*(\dA^1_{\lambda_0}\times D^\vee))$ is the morphism of sheaves of rings
that corresponds to $\phi$), then we have for all $P\in \cD_{\widetilde{V}^\vee}(*(\dA^1_{\lambda_0}\times D^\vee))$ that
$$
P(g\otimes Q)(k)=P(g\cdot \phi^*(Q(k))).
$$

The direct image complex $\phi_+ \cM(*D)$ is represented by the complex of left $\cD_{\dA^1_{\lambda_0}\times\dA^1_t}[t^{-1}]$-modules associated
to the complex of right $\cD_{\dA^1_{\lambda_0}\times\dA^1_t}[t^{-1}]$-modules $\phi_*(\cF\otimes_{\cD} \cD_\rightarrow)$ (using that $\phi$ is affine), where
$$
\begin{tikzcd}
\cF\otimes_{\cD_{\widetilde{V}^\vee}} \cD_\rightarrow: &  \ldots \ar{r} & \cD_\rightarrow^{n+1} \ar{r}{\Pi}& \cD_\rightarrow \ar{r}
& 0,
\end{tikzcd}
$$
where the last sheaf $\cD_\rightarrow$ sits in degree $0$ and where the map $\Pi$ is given by
$$
\Pi(g_0\otimes 1, \ldots, g_n\otimes 1)=\left[k\longmapsto\sum_{i=0}^{n} P_i(g_i\cdot\phi^*k)\right]
$$
for any $g_0,\ldots,g_n\in\cO_{\widetilde{V}^\vee}(*(\dA^1_{\lambda_0}\times D^\vee))$ (notice that because $\cD_\rightarrow$ is a right $\phi^{-1}\cD_{\dA^1_{\lambda_0}\times\dA^1_t}[t^{-1}]$-module, and the map $\Pi$ is $\phi^{-1}\cD_{\dA^1_{\lambda_0}\times\dA^1_t}[t^{-1}]$-linear,
it suffices to describe it on elements $g_i\otimes 1\in \cD_\rightarrow$).
Notice moreover that since $P_1,\ldots,P_{n-1}$ are vector fields and $P_n$ is a vector field plus a constant, we have
\begin{equation}\label{eq:LogHFields}
P_i(g_i\cdot(\phi^*k))=P_i(g_i)\cdot (\phi^*k)+g_i\cdot P_i(\phi^*k)
\end{equation}
for $i=1,\ldots,n-1$ and
\begin{equation}\label{eq:ActionEuler}
\begin{array}{rcl}
\D P_n(g_n\cdot(\phi^*k)) &= &
\D\left(\sum_{i=0}^{n} \lambda_i\partial_{\lambda_i}-\beta_0\right)(g_n)\cdot(\phi^*k)+
g_n\cdot\left(\sum_{i=0}^{n} \lambda_i\partial_{\lambda_i}\right)(\phi^*k)\\ \\
&=&
\D\left(\sum_{i=0}^{n} \lambda_i\partial_{\lambda_i}\right)(g_n)\cdot(\phi^*k)+
g_n\cdot\left(\sum_{i=0}^{n} \lambda_i\partial_{\lambda_i}-\beta_0\right)(\phi^*k).
\end{array}
\end{equation}
Our aim is to calculate the cohomology $\cH^0 \phi_*(\cF\otimes_{\cD} \cD_\rightarrow)$,
i.e., the cokernel of the map $\Pi$, seen as a $\cD_{\dA^1_{\lambda_0}\times \dA^1_t}[t^{-1}]$-module.

Notice that
for all $i\in\{1,\ldots,n-1\}$ and for all $k\in \phi^{-1}\cO_{\dA^1_{\lambda_0}\times \dA^1_t}[t^{-1}]$, we have $P_i(\phi^*k)=0$
since $P_i=\delta^\vee_i$ is a vector field in $\Der(-\log h^\vee)$.
Write $e_0,e_1,\ldots,e_{n-1},e_n$ for the canonical generators of $\cD_\rightarrow^{n+1}$, then we see
from from formula \eqref{eq:LogHFields} that
$$
\Pi \left((g_i\otimes 1)e_i\right)=
\delta^\vee_i(g_i)\otimes 1 \in \cD_\rightarrow,\ i=1,\ldots,n-1.
$$

In other words, the image of $\Pi$ is the right $\phi^{-1}\cD_{\dA^1_{\lambda_0}\times\dA^1_t}[t^{-1}]$-submodule
of $\cD_\rightarrow$ generated by
$$
\left\{\Pi((g_0\otimes 1)e_0), \delta_1^\vee(g_1)\otimes 1,\ldots,\delta_{n-1}^\vee(g_{n-1})\otimes 1,
\Pi((g_n\otimes 1)e_n)\,|\,g_0,\ldots,g_n\in
\cO_{\widetilde{V}^\vee}(*(\dA^1_{\lambda_0}\times D^\vee)) \right\}.
$$

Consider $\cO_{\widetilde{V}^\vee}(*(\dA^1_{\lambda_0}\times D^\vee))$ as
a $\phi^{-1}\cO_{\dA^1_{\lambda_0}\times\dA^1_t}[t^{-1}]$-module. Then it is clear that the
$\dC$-vector space
$$
\left\{\delta_i^\vee(g)\,|\, 1\leq i \leq n-1, g\in  \cO_{\widetilde{V}^\vee}(*(\dA^1_{\lambda_0}\times D^\vee)) \right\}
$$
has the structure of a
$\phi^{-1}\cO_{\dA^1_{\lambda_0}\times\dA^1_t}[t^{-1}]$-submodule (since
elements from $\phi^{-1}\cO_{\dA^1_{\lambda_0}\times\dA^1_t}[t^{-1}]$ are killed
by the vector fields $\delta_1^\vee, \ldots, \delta_{n-1}^\vee$).
We claim that we have an isomorphism of $\phi^{-1}\cO_{\dA^1_{\lambda_0}\times\dA^1_t}[t^{-1}]$-modules
$$
\frac{\cO_{\widetilde{V}^\vee}(*(\dA^1_{\lambda_0}\times D^\vee))}{\left\{\delta_i^\vee(g)\,|\, 1\leq i \leq n-1, g\in \cO_{\widetilde{V}^\vee}(*(\dA^1_{\lambda_0}\times D^\vee))\right\}}
\cong \phi^{-1}\cO_{\dA^1_{\lambda_0}\times\dA^1_t}[t^{-1}]
$$
or, equivalently (recall that the first component of $\phi$ is the identity) an isomorphism of $(h^\vee)^{-1}\cO_{\dA^1_t}[t^{-1}]$-modules
$$
\frac{\cO_{V^\vee}(*D^\vee)}{\left\{\delta_i^\vee(g)\,|\, 1\leq i \leq n-1, g\in \cO_{V^\vee}(*D^\vee))\right\}}
\cong (h^\vee)^{-1}\cO_{\dA^1_t}[t^{-1}].
$$
In order to show the claim, consider $n-1$-st (i.e. the top) cohomology of the relative (meromorphic) de Rham complex
$$
\cH^{n-1}(h^\vee)_*
(\Omega^\bullet_{V^\vee/\dA^1_t}(*D^\vee), d)
=
(h^\vee)_*\cH^{n-1}
(\Omega^\bullet_{V^\vee/\dA^1_t}(*D^\vee), d).
$$
The cohomology $(h^\vee)_*\cH^{n-1}
(\Omega^\bullet_{V^\vee/\dA^1_t}(*D^\vee), d)$ is nothing but $\cO_{\dA^1_t}[t^{-1}]$:
each (non-singular) fibre of
$h^\vee$ is an orbit of the dual action of $G=A_D$ on $V^\vee$, having finite stabilizers,
and since $A_D$ is reductive and connected, it has a deformation retraction to a compact connected $n-1$-dimensional
real Lie group, hence $H^{n-1}((h^\vee)^{-1}(t),\dC)=\dC$ for all $t\neq 0$.

Notice that we have
$$
(\Omega^\bullet_{V^\vee/\dA^1_t}(*D^\vee), d) \cong
(\Omega^\bullet(-\log h^\vee)(*D^\vee), d),
$$
where
$$
\D \Omega^\bullet(-\log h^\vee):=\frac{\Omega^\bullet_{V^\vee}(-\log D^\vee)}{\frac{d(h^\vee)}{h^\vee}\wedge\Omega^{\bullet-1}_{V^\vee}(-\log D^\vee)},
$$
see \cite[Section 2.2]{dGMS}. Then
$$
\begin{array}{rcl}
\D \cH^{n-1}
(\Omega^\bullet_{V^\vee/\dA^1_t}(*D^\vee), d)
&\cong&
\D\cH^{n-1}(\Omega^\bullet(-\log\,h^\vee)(*D^\vee),d)\\ \\
&\cong &\D\left(
\frac{\cO_{V^\vee}(*D^\vee)}{\left\{\delta_1^\vee(g),\ldots,\delta_{n-1}^\vee(g)\,|\,g\in \cO_{V^\vee}(*D^\vee)\right\}}
\right)\cdot \alpha
\end{array}
$$
where $\alpha=i_{\chi^\vee}(\vol/h^\vee)=n\vol/d(h^\vee)$ is a volume form in the fibres of $h^\vee$ (see \cite[Formula 2.7]{dGMS}). Here  $\chi^\vee$ denotes the Euler field $\sum_{i=1}^n \lambda_i\partial_{\lambda_i}$ in the space $V^\vee$
(Notice that we have again a decomposition $\Der(-\log D^\vee)=\Der(-\log h^\vee) \oplus \cO_{V^\vee} \chi^\vee$, where
$\Der(-\log h^\vee)=\{\theta\in\Der_{V^\vee}\,|\,\theta(h^\vee)=0\}$ since
$D^\vee$ is again a reductive linear free divisor),
and we write $i_{\chi^\vee}: \Omega^i_{V^\vee}(*D^\vee)
\rightarrow
\Omega^{i-1}_{V^\vee}(*D^\vee)$
for the interior derivative.

Namely, if we write $\lambda_j:=i_{\delta_j^\vee}(\alpha)\in\Omega^{n-2}(-\log\,h^\vee)$, then since
$d\lambda_j=0$ (because $D$ and $D^\vee$ are special, see \cite[Lemma 2.6]{dGMS}) and since $i_{\delta_j^\vee}(dg\wedge \alpha)=0\in\Omega^{n-1}(\log\,h^\vee)$ (see
\cite[Proof of Lemma 4.3]{dGMS}) the morphism
$d:\Omega^{n-2}(-\log\,h^\vee)\rightarrow \Omega^{n-1}(-\log\,h^\vee)$ is identified with
$$
\begin{array}{rcl}
\D\bigoplus_{j=1}^{n-1} \cO_{V^\vee} \lambda_j & \longrightarrow & \D\cO_{V^\vee} \alpha \\ \\
(g_1,\ldots,g_{n-1}) & \longmapsto & \left[\sum_{j=1}^{n-1} \delta_j^\vee(g_j)\right]\alpha.
\end{array}
$$

This shows the claim.
As a consequence, we have an identification
\begin{equation}\label{eq:BigIdent}
\begin{array}{c}
\D \frac{\cD_\rightarrow}{\left(\delta_1^\vee(g)\otimes 1,\ldots,\delta_{n-1}^\vee(g)\otimes 1,\,|\,g\in
\cO_{\widetilde{V}^\vee}(*(\dA^1_{\lambda_0}\times D^\vee)) \right)\phi^{-1}\cD_{\dA^1_{\lambda_0}\times \dA^1_t}[t^{-1}]}
=  \\ \\
\D \frac{\cO_{\widetilde{V}^\vee}(*(\dA^1_{\lambda_0}\times D^\vee))
\otimes_{\phi^{-1}\cO_{\dA^1_{\lambda_0}\times \dA^1_t}[t^{-1}]} \phi^{-1}\cD_{\dA^1_{\lambda_0}\times \dA^1_t}[t^{-1}]
}{\left(\delta_1^\vee(g)\otimes 1,\ldots,\delta_{n-1}^\vee(g)\otimes 1,\,|\,g\in
\cO_{\widetilde{V}^\vee}(*(\dA^1_{\lambda_0}\times D^\vee)) \right)\phi^{-1}\cD_{\dA^1_{\lambda_0}\times \dA^1_t}[t^{-1}]}
\\ \\
\D\cong
\phi^{-1}\cO_{\dA^1_{\lambda_0}\times \dA^1_t}[t^{-1}]
\otimes_{\phi^{-1}\cO_{\dA^1_{\lambda_0}\times \dA^1_t}[t^{-1}]} \phi^{-1}\cD_{\dA^1_{\lambda_0}\times \dA^1_t}[t^{-1}]
\cong \phi^{-1}\cD_{\dA^1_{\lambda_0}\times \dA^1_t}[t^{-1}]
\end{array}
\end{equation}
and hence
$$
\cD_\rightarrow/\im(\Pi) \cong
\frac{\phi^{-1}\cD_{\dA^1_{\lambda_0}\times \dA^1_t}[t^{-1}]}
{\left(\Pi((g_0\otimes 1)e_0), \Pi((g_n\otimes 1)e_n)\right)\phi^{-1}\cD_{\dA^1_{\lambda_0}\times \dA^1_t}[t^{-1}]}
$$
where now $\Pi((g_0\otimes 1)e_0)$ resp. $\Pi((g_n\otimes 1)e_n)$ denotes the image
of these two elements of $\cD_\rightarrow$ in $\phi^{-1}\cD_{\dA^1_{\lambda_0}\times \dA^1_t}[t^{-1}]$
under the identification given by equation \eqref{eq:BigIdent}.

Consider the Bernstein polynomial $b_{h^\vee}(s)=\prod_{i=1}^{n}(s-\alpha_i)$ of $h^\vee$
normalized such that we have
$$
h(\partial_{\lambda_1},\ldots,\partial_{\lambda_n})(h^\vee)^s=b_{h^\vee}(s)\cdot (h^\vee)^{s-1}.
$$
Notice (see \cite{granger_schulze_rims_2010}
or \cite{nar_symmetry_BS}) that the roots $\alpha_i$ are symmetric around zero
and write
$b_{h^\vee}(s)=s\cdot B_{h^\vee}(s)$, where we take the convention that
$B_{h^\vee}(s)=\prod_{i=2}^{n}(s-\alpha_i)$, i.e. that $\alpha_1=0$. We now claim that
\begin{equation}\label{eq:RmDirectImage}
\cH^0 \phi_*(\cF\otimes_{\cD} \cD_\rightarrow)\cong
\frac{\cD_{\dA^1_{\lambda_0}\times\dA^1_t}[t^{-1}]}{\left(\partial_{\lambda_0}^nh(p)-\partial_t\cdot B_{h^\vee}(t\partial_t), \partial_{\lambda_0}\lambda_0+nt\partial_t  -1-\beta_0\right)\cdot \cD_{\dA^1_{\lambda_0}\times\dA^1_t}[t^{-1}]}.
\end{equation}
Using formula \eqref{eq:ActionEuler}, we have
$$
P_n(\phi^*k)= (\sum_{i=0}^{n} \lambda_i\partial_{\lambda_i}-\beta_0)(\phi^*k)
=(\partial_{\lambda_0}\lambda_0+\sum_{i=1}^{n} \lambda_i\partial_{\lambda_i}-1-\beta_0)(\phi^*k),
$$
which means that the differential operator $\sum_{i=0}^{n} \lambda_i\partial_{\lambda_i}-\beta_0$ satisfies
the assumptions of the previous lemma (Lemma \ref{lem:RestrDiffOp}) for $X=\widetilde{V}^\vee$, $Y=\dA^1_{\lambda_0}\times \dA^1_t$
and the morphism $\phi:X\rightarrow Y$.
On the other hand, Bernstein's functional equation
$$
h(\partial_{\lambda_1},\ldots,\partial_{\lambda_n})(h^\vee)^s=b_{h^\vee}(s)\cdot (h^\vee)^{s-1}
$$
for the function $h^\vee$ implies that the differential operator
$h(\partial_1,\ldots,\partial_n)$ also satisfies the assumptions of the previous
lemma (in the situation where $X=\widetilde{V}^\vee$, $Y=\dA^1_{\lambda_0}\times\dA^1_t$, $\phi:X\rightarrow Y$).
Hence Lemma \ref{lem:RestrDiffOp} shows that they both define differential operators on the subalgebra $\phi^{-1}\cO_{\dA^1_{\lambda_0}\times \dA_t}$. Namely,
the operator
$h(\partial_{\lambda_1},\ldots,\partial_{\lambda_n})_{|\phi^{-1}\cO_{\dA^1_{\lambda_0}\times\dA^1_t}}\in \phi^{-1}(\cD_{\dA^1_{\lambda_0\times \dA^1_t}}[t^{-1}])$
corresponding to $h(\partial_{\lambda_1},\ldots,\partial_{\lambda_n})$ via the previous lemma
is precisely $\partial_t\cdot B_{h^\vee}(t\partial_t)$ (since it acts on $t^s$ as $h(\partial_{\lambda_1},\ldots,\partial_{\lambda_n})$ acts on $h^s$).  Similarly,
the operator $\sum_{i=0}^{n} \lambda_i\partial_{\lambda_i} \in \cD_{\widetilde{V}^\vee}$ corresponds
to $\lambda_0\partial_{\lambda_0}+nt\partial_t =\partial_{\lambda_0}\lambda_0+nt\partial_t-1 \in \phi^{-1}\cD_{\dA^1_{\lambda_0\times \dA^1_t}}[t^{-1}]$. This shows the claim, i.e. formula \eqref{eq:RmDirectImage}.

The final result follows
by taking the associated left $\cD_{\dA^1_{\lambda_0}\times\dA^1_t}[t^{-1}]$-module
of the right hand side of equation \eqref{eq:RmDirectImage}, notice that
$$
\begin{array}{rcl}
(\partial_t\cdot B_{h^\vee}(t\partial_t))^T&=&-B_{h^\vee}(t\partial_t)^T\cdot \partial_t
=\D (-1)^n\prod_{i=2}^{n}(t\partial_t-\alpha_i)^T\cdot \partial_t=(-1)^n\prod_{i=2}^{n}(-\partial_t t-\alpha_i)\cdot \partial_t=\\ \\
&=&  \D \prod_{i=1}^{n-1}\left(t\partial_t+1+\alpha_i\right)\partial_t
=  \partial_t\prod_{i=1}^{n-1}\left(t\partial_t+\alpha_i\right)
\stackrel{(*)}{=}  \partial_t\prod_{i=1}^{n-1}\left(t\partial_t-\alpha_i\right) \\ \\
& = &  \partial_tB_{h^\vee}(t\partial_t) = t^{-1} b_{h^\vee}(t\partial_t)
\end{array}
$$
where $(-)^T$ denotes the operation of taking the transpose operator and where
the equality (*) holds by the symmetry around $0$ of the roots of $B_{h^\vee}$.

Finally, as we have already noticed above, we can assume that $h$ and $h^\vee$ are equal
since both define linear free divisors, so that also $B_h=B_{h^\vee}$ resp. $b_h=b_{h^\vee}$.

\end{proof}

In the sequel, we draw some consequences of the above proposition.

\begin{corollary}\label{cor:DimReduc}
We have an isomorphism
$$
\FL_{\dG_{m,t}}^{\textup{loc}}(\cH^0(\phi\circ\varphi)_+\cO_{X\times V^\vee}(*(X\times D^\vee))) \cong
\frac{\cD_{\dA^1_z\times\dG_{m,t}}}{(z^n b_h(t\partial_t)-h(p)\cdot t,z^2\partial_z+ntz\partial_t)},
$$
where $\FL_{\dG_{m,t}}^{loc}: \textup{Mod}(\cD_{\dA^1_{\lambda_0}\times\dG_{m,t}})\rightarrow
\textup{Mod}(\cD_{\dA^1_{z}\times\dG_{m,t}})$ is the localized partial Fourier-Laplace transformation
with base $\dG_{m,t}$ (see formula \eqref{eq:FLloc} at the end of the introduction).
\end{corollary}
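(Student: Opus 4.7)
The plan is to combine Proposition \ref{prop:ReducTautSystem} with a localized version of Corollary \ref{cor:IsoAfterLocFL}, using base change to commute the partial Fourier--Laplace transformation with the pushforward along $\phi$. First I would extend the 4-term exact sequence of Proposition \ref{prop:4TermSequence} to the localization along $\dA^1_{\lambda_0}\times D^\vee$. The outer summands $H^{n-2}(X,\dC)\otimes_\dC\cO_{\widetilde{V}^\vee}(*(\dA^1_{\lambda_0}\times D^\vee))$ and $H^{n-1}(X,\dC)\otimes_\dC\cO_{\widetilde{V}^\vee}(*(\dA^1_{\lambda_0}\times D^\vee))$ carry the trivial connection in the $\lambda_0$-direction, hence factor as $\cO_{\dA^1_{\lambda_0}}\boxtimes N$ for some $\cD$-module $N$ on $V^\vee$; the partial Fourier transformation in $\lambda_0$ sends $\cO_{\dA^1_{\lambda_0}}$ to a skyscraper at $\tau = 0$, which is killed by the subsequent localization $\tau \neq 0$. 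Thus after applying $\FL^{\textup{loc}}_{V^\vee}$ the outer terms vanish, yielding, for any $\beta_0 \in \dZ$ with $\beta_0 < c$, an isomorphism
\[
\FL^{\textup{loc}}_{V^\vee}\cH^0\varphi_+\cO_{X\times V^\vee}(*(X\times D^\vee)) \cong \FL^{\textup{loc}}_{V^\vee}\cM(*D^\vee).
\]

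Next, since $\phi = (\id, h^\vee)$ acts trivially on the $\lambda_0$-coordinate where the Fourier transformation is taken, a base-change argument shows that $\phi_+$ commutes with $\FL^{\textup{loc}}$ in this direction. Applying $(\id\times h^\vee)_+$ to the above and restricting to $t \neq 0$ produces
\[
\FL^{\textup{loc}}_{\dG_{m,t}}\cH^0(\phi\circ\varphi)_+\cO_{X\times V^\vee}(*(X\times D^\vee)) \cong \FL^{\textup{loc}}_{\dG_{m,t}}\cH^0\phi_+\cM(*D^\vee).
\]
To interchange $\cH^0$ with $\phi_+$ on the left-hand side, I would check that the higher cohomologies of $\varphi_+\cO(*)$ are themselves of the $\cO$-trivial type in the $\lambda_0$-direction and hence killed by $\FL^{\textup{loc}}_{V^\vee}$, in the same way as the outer terms of the 4-term sequence.

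Finally, I would plug in the explicit presentation of $\cH^0\phi_+\cM(*D^\vee)$ from Proposition \ref{prop:ReducTautSystem} and carry out the partial Fourier--Laplace transformation directly, using $\lambda_0\mapsto\partial_\tau$, $\partial_{\lambda_0}\mapsto -\tau$ together with the coordinate change $z = 1/\tau$ (so that $\tau\partial_\tau = -z\partial_z$). The first relation $h(p)t\partial_{\lambda_0}^n - b_h(t\partial_t)$ becomes, up to sign and after multiplying by the invertible $z^n$, the first stated generator $z^n b_h(t\partial_t) - h(p)t$. The Euler-type second relation $\lambda_0\partial_{\lambda_0} + nt\partial_t + (n+1) + \beta_0$ becomes $z\partial_z + nt\partial_t + n + \beta_0$; selecting $\beta_0 = -n$ (which lies in the admissible integer range for the linear free divisors at hand) and left-multiplying by $z$ then recovers the second stated generator $z^2\partial_z + ntz\partial_t$.

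The main technical obstacle I anticipate is the commutation step: one needs a clean version of base change for $\FL^{\textup{loc}}$ against $\phi_+$, together with the vanishing of $\FL^{\textup{loc}}_{V^\vee}$ on the higher cohomology sheaves of $\varphi_+\cO(*)$, which is plausible given their structure but requires explicit verification.
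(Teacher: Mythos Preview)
Your overall architecture matches the paper's proof closely: localize the four-term sequence, kill the outer terms under $\FL^{\textup{loc}}$, commute $\phi_+$ with the partial Fourier--Laplace transformation, handle the higher cohomologies of $\varphi_+$, and then plug in Proposition~\ref{prop:ReducTautSystem}. Two points deserve comment.

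First, a minor incompleteness: for the vanishing of $\FL^{\textup{loc}}_{V^\vee}$ on $\cH^i\varphi_+\cO_{X\times V^\vee}(*(X\times D^\vee))$ for $i<0$, the paper does not argue abstractly that these are ``$\cO$-trivial in the $\lambda_0$-direction''. It invokes the concrete fact that each fibre $\varphi_{|X\times\{f\}}\colon X\to\dA^1_{\lambda_0}$ (for $f\in V^\vee\setminus D^\vee$) is a \emph{tame} polynomial, so by \cite[Theorem~8.1]{Sa2} the negative Gau\ss--Manin cohomologies are $\cO$-locally free of finite rank. Your sketch would need this ingredient.

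Second, and this is a genuine gap: your final step sets $\beta_0=-n$ in order to kill the constant in the Euler relation. This value is \emph{never} admissible. Since $-1$ is always a root of $b_h$, the integer $-n=0+n\cdot(-1)$ lies in $\bigcup_{k\geq 0}(k+n\cdot\{\text{roots of }b_h\})$, so $c\leq -n$ and the requirement $\beta_0<c$ forces $\beta_0\leq -n-1$. (Compare the paper's remark that for the $\star_3$-example, with $n=6$, the largest admissible integer is $\beta_0=-9$.) The paper removes the extra term differently: after $\FL^{\textup{loc}}_{\dG_{m,t}}$ one obtains
\[
\frac{\cD_{\dA^1_z\times\dG_{m,t}}}{\bigl(z^n b_h(t\partial_t)-h(p)\,t,\; z^2\partial_z+ntz\partial_t+z(n+\beta_0)\bigr)},
\]
and since this module is by construction a direct image from $\{z\neq 0\}$, multiplication by $z$ is invertible on it. Conjugation by $z^{n+\beta_0}$ (an integer power) then carries the second generator to $z^2\partial_z+ntz\partial_t$, while the first generator commutes with $z$ and is unchanged. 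This yields the stated presentation for \emph{any} admissible $\beta_0$, without needing a special value.
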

\begin{proof}
We deduce from corollary
\ref{cor:IsoAfterLocFL} that for any $\beta_0 \in (-\infty,c)\cap \dZ$ we have an isomorphism of $\cD_{\dA^1_z\times\dA^1_t}$-modules
$$
\cH^0(\id_{\dA^1_z},h^\vee)_+\left(\FL^{\textup{loc}}_{V^\vee}(\cH^0 \varphi_+ \cO_{X\times V^\vee})\right) \cong \cH^0(\id_{\dA^1_z},h^\vee)_+\left(\FL_{V^\vee}^{\textup{loc}}\cM(\widetilde{G},\widetilde{\rho},\widetilde{X},(\beta_0,0)))\right),
$$
and similarly we get
\begin{equation}\label{eq:GMEqualTautMerom}
\cH^0(\id_{\dA^1_z},h^\vee)_+\left(\FL^{\textup{loc}}_{V^\vee}(\cH^0 \varphi_+ \cO_{X\times V^\vee}(*(X\times D^\vee)))\right) \cong \cH^0(\id_{\dA^1_z},h^\vee)_+\left(\FL_{V^\vee}^{\textup{loc}}\cM(*D^\vee)\right)
\end{equation}
(assuming that $\beta_0$ used in the definition of $\cM(*D^\vee)$ satisfies $\beta_0 \in (-\infty,c)\cap \dZ$).
On the other hand, we have
$$
\cH^0(\id_{\dA^1_z},h^\vee)_+\FL^{\textup{loc}}_{V^\vee}(\cK)
\cong
\FL^{\textup{loc}}_{\dG_{m,t}} \cH^0 \phi_+ (\cK)
$$
for any $\cK\in D^b(\cD_{\widetilde{V}^\vee})$
since the first component of $\phi$ is the identity mapping on $\dA^1_{\lambda_0}$.
We know
that $\cH^i(\varphi_+ \cO_{X\times V^\vee}(*(X\times D^\vee)))$ is $\cO_{\widetilde{V}^\vee}(*(\dA^1_{\lambda_0}\times D^\vee))$-free of finite rank for $i<0$ since the restrictions $\varphi_{|X\times \{f\}}: X\rightarrow  \dA^1_{\lambda_0}\times\{f\}$ (for $f\in V^\vee\backslash D^\vee)$ are tame functions (see \cite[Section 3.3]{dGMS} for the tameness, and then \cite[Theorem 8.1]{Sa2}),
so that $\FL^{\textup{loc}}_{V^\vee}(\cH^i(\varphi_+ \cO_{X\times V^\vee}(*(X\times D^\vee))))=0$ for $i<0$. This implies that
$$
\begin{array}{rcl}
\cH^0(\id_{A^1_z},h^\vee)_+\FL^{\textup{loc}}_{V^\vee}(\varphi_+ \cO_{X\times V^\vee}(*(X\times D^\vee))) \cong
\cH^0(\id_{A^1_z},h^\vee)_+\FL^{\textup{loc}}_{V^\vee}(\cH^0\varphi_+ \cO_{X\times V^\vee}(*(X\times D^\vee))).
\end{array}
$$
Using this, it then follows from equation \ref{eq:GMEqualTautMerom} that
$$
\FL^{\textup{loc}}_{\dG_{m,t}} \cH^0 \phi_+ \varphi_+ \cO_{X\times V^\vee}(*(X\times D^\vee))
\cong
\FL^{\textup{loc}}_{\dG_{m,t}} \cH^0 \phi_+ \cM(*D^\vee).
$$
Recall that we have shown in Proposition \ref{prop:ReducTautSystem} that
$$
\begin{array}{rcl}
\D \cH^0\phi_+\cM(*D) & = &
\D \cH^0\phi_+\cM(\widetilde{G},\widetilde{\rho},\widetilde{X},(\beta_0,0))\otimes_{\cO_{\widetilde{V}^\vee}} \cO_{\widetilde{V}^\vee}(*(\dA^1_{\lambda_0}\times D^\vee)) \\ \\
&\cong& \D
\frac{\cD_{\dA^1_{\lambda_0}\times\dG_{m,t}}}{(\lambda_0\partial_{\lambda_0}+nt\partial_t+(n+1)+\beta_0,  h(p)\cdot t\cdot\partial^n_{\lambda_0}- b_h(t\partial_t))}.
\end{array}
$$
Now notice that
$$
\begin{array}{c}
\D \FL^{\textup{loc}}_{\dG_{m,t}}\left(\frac{\cD_{\dA^1_{\lambda_0}\times\dG_{m,t}}}{(\lambda_0\partial_{\lambda_0}+nt\partial_t+(n+1)+\beta_0, h(p)\cdot t\cdot\partial^n_{\lambda_0}- b_h(t\partial_t))}\right)
\\ \\ \D=\frac{\cD_{\dA^1_z\times\dG_{m,t}}}{(z^n b_h(t\partial_t)-h(p)\cdot t,z^2\partial_z+ntz\partial_t+z(n+\beta_0))},
\end{array}
$$
however, multiplication by $z$ is invertible on this module by construction (since it is a direct image
under the open embedding $j_z:\dG_{m,z}\times \dG_{m,t}\hookrightarrow \dA^1_z\times \dG_{m,t}$) and
it is easy to see that multiplication with $z^{n+\beta_0}$ induces an isomorphism
$$
\frac{\cD_{\dA^1_z\times\dG_{m,t}}}{(z^n b_h(t\partial_t)- h(p)\cdot t,z^2\partial_z+ntz\partial_t)}
\cong
\frac{\cD_{\dA^1_z\times\dG_{m,t}}}{(z^n b_h(t\partial_t)- h(p)\cdot t,z^2\partial_z+ntz\partial_t+z(n+\beta_0))}.
$$
\end{proof}
Next we discuss the relation of the $\cD$-modules obtained from tautological systems
associated to linear free divisors to the one studied in \cite{dGMS} and \cite{Sev1}.
Let $f\in (h^\vee)^{-1}(h(p))\subset V^\vee$ be a linear form on $V$. In these papers we have
considered the morphism $(f,h):V \longrightarrow \dA^1_s\times \dA^1_t$ and the direct image
of $\cO_V(*D)$ with respect to this morphism. Since this morphism depends on the chosen linear form $f$,
we would like to consider it here rather as a morphism
$$
\begin{array}{rcl}
\Psi:X^\vee\times V & \longrightarrow & \dA^1_s\times \dA^1_t \\ \\
(f,x) & \longmapsto & (f(x),h(x))
\end{array}
$$
where $X^\vee:=(h^\vee)^{-1}(h(p))$.
Then we have the following comparison result.
\begin{proposition}\label{prop:CalcGMSystemsLFD}
Let, as before, $p\in V\backslash D$, $X = h^{-1}(h(p))$ and $X^\vee=(h^\vee)^{-1}(h(p))$.
Then there is an isomorphism
in $D^b(\cD_{\dA^1_{\lambda_0}\times \dA^1_t})$
$$
\Psi_+ \cO_{X^\vee\times V}(*(X^\vee\times D)) \cong (\phi\circ \varphi)_+\cO_{X\times V^\vee}(*(X\times D^\vee))
$$
and hence an isomorphism of $\cD_{\dA^1_z\times \dG_{m,t}}$-modules
$$
\begin{array}{rcl}
\FL^{\textup{loc}}_{\dG_{m,t}}\cH^0\Psi_+\cO_{X^\vee\times V}(*(X^\vee\times D)) & \cong &
\FL^{\textup{loc}}_{\dG_{m,t}}(\cH^0(\phi\circ \varphi)_+(\cO_{X\times V^\vee}(*(X\times D^\vee)))) \\ \\
& \cong &
\frac{\D \cD_{\dA^1_z\times\dG_{m,t}}}{\D (z^n b_h(t\partial_t)-h(p)\cdot t,z^2\partial_z+ntz\partial_t)}.
\end{array}
$$
\end{proposition}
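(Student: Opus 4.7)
The plan is to prove the first isomorphism in $D^b(\cD_{\dA^1_{\lambda_0}\times\dA^1_t})$; the second (explicit) isomorphism then follows by applying $\FL^{\textup{loc}}_{\dG_{m,t}}$ and invoking Corollary \ref{cor:DimReduc}.

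To establish the first isomorphism, I would realize both direct images as two different restrictions of a single ``universal'' $\cD$-module on $\dA^3$. Define
$$
\Theta\colon V\times V^\vee \longrightarrow \dA^1_{\lambda_0}\times\dA^1_t\times\dA^1_s,\qquad (x,f)\longmapsto (f(x),h(x),h^\vee(f)),
$$
and set $\cT:=\Theta_+\cO_{V\times V^\vee}(*((D\times V^\vee)\cup(V\times D^\vee)))$. Since $h(p)\neq 0$ and the intersections $X\cap D$ and $X^\vee\cap D^\vee$ are empty, the hyperplanes $\{t=h(p)\}$ and $\{s=h(p)\}$ are non-characteristic for $\cT$, and base change identifies their inverse images:
$$
\iota^+_{t=h(p)}\cT\cong (\phi\circ\varphi)_+\cO_{X\times V^\vee}(*(X\times D^\vee)),\qquad \iota^+_{s=h(p)}\cT\cong \Psi_+\cO_{X^\vee\times V}(*(X^\vee\times D)),
$$
after the evident identification of the two codimension-one linear subspaces of $\dA^3$ with $\dA^1_{\lambda_0}\times\dA^1_t$. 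The task is therefore to match these two restrictions of $\cT$.

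The matching rests on the duality between $(V,D,h)$ and $(V^\vee,D^\vee,h^\vee)$: both are strongly Koszul reductive linear free divisors of degree $n$, with the same Bernstein-Sato polynomial $b_h=b_{h^\vee}$ (by the classical theory of prehomogeneous vector spaces combined with property (b) of section \ref{sec:FreeDiv}), and the groups $A_D$ and $A_{D^\vee}$ are naturally isomorphic via the contragredient representation. The most transparent route is simply to rerun the entire chain of arguments of sections 2--4 and of Corollary \ref{cor:DimReduc} with the roles of $(V,D,h)$ and $(V^\vee,D^\vee,h^\vee)$ interchanged, taking a base point $q\in X^\vee\subset V^\vee$ so that $h^\vee(q)=h(p)$. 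The dual analogue of Corollary \ref{cor:DimReduc} then produces the same explicit cyclic $\cD$-module on $\dA^1_z\times\dG_{m,t}$, establishing at once the second isomorphism of the proposition and the isomorphism $\FL^{\textup{loc}}_{\dG_{m,t}}\cH^0\Psi_+\cO(\ast)\cong \FL^{\textup{loc}}_{\dG_{m,t}}\cH^0(\phi\circ\varphi)_+\cO(\ast)$.

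The main obstacle is lifting this coincidence, which holds after $\FL^{\textup{loc}}_{\dG_{m,t}}$, back to an isomorphism in $D^b(\cD_{\dA^1_{\lambda_0}\times\dA^1_t})$ before the Fourier transform, as required for the first stated isomorphism. A priori the two direct images could differ by $\cO_{\dA^1_{\lambda_0}\times\dA^1_t}$-coherent summands arising, in the spirit of the 4-term sequence of Proposition \ref{prop:4TermSequence}, from the fibrewise cohomology of $X$ and $X^\vee$. This is handled by observing that $X$ and $X^\vee$ are closed affine orbits of the isomorphic reductive groups $A_D\cong A_{D^\vee}$ with isomorphic (finite) stabilizers, and hence deformation retract onto a common maximal compact subgroup; their cohomologies therefore coincide, the outer terms of the two 4-term sequences match, and the middle terms are identified accordingly.
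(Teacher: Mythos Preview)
Your proposal has a genuine gap in establishing the first isomorphism in $D^b(\cD_{\dA^1_{\lambda_0}\times\dA^1_t})$. You correctly see that the second isomorphism follows from Corollary~\ref{cor:DimReduc} once the first is known, and your universal construction $\Theta$ is set up sensibly; but you then abandon it and try instead to deduce the $D^b$-isomorphism from the coincidence after $\FL^{\textup{loc}}_{\dG_{m,t}}$. That inference does not go through. Knowing that two complexes become isomorphic after an exact functor, and that the ``difference'' is $\cO$-coherent with matching fibrewise cohomology, does not force an isomorphism in the derived category: you have controlled neither the extension classes in the (analogue of the) 4-term sequence, nor the higher cohomology sheaves and the differentials between them. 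Moreover, Proposition~\ref{prop:4TermSequence} lives on $\widetilde{V}^\vee$, before the push-forward by $\phi$, so invoking it ``in spirit'' for the objects on $\dA^1_{\lambda_0}\times\dA^1_t$ requires a separate argument you have not supplied.

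The paper's proof avoids all of this by a direct geometric identification. The key point you are missing is that for a reductive linear free divisor one can choose coordinates on $V$ (and the induced dual coordinates on $V^\vee$) so that $h$ and $h^\vee$ are \emph{literally the same polynomial}: this follows from $h^\vee(\lambda)=\overline{h(\overline{w})}$ in unitary coordinates together with the fact that $h$ can be defined over~$\dQ$. One then has an explicit isomorphism $\textit{dual}\colon V\times V^\vee\to V^\vee\times V$ (identify $V$ with $V^\vee$ via these coordinates and swap factors) under which $(can,h^\vee)=(can',h)\circ\textit{dual}$, and which sends $X\times V^\vee$ to $X^\vee\times V$ and $X\times D^\vee$ to $X^\vee\times D$. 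This conjugates $\phi\circ\varphi$ into $\Psi$ on the nose, giving the first isomorphism immediately in $D^b$, with no Fourier transform and no extension bookkeeping. Your $\Theta$-construction could in fact be salvaged along the same lines: once $h=h^\vee$, the involution swapping the two factors of $V\times V^\vee$ and the coordinates $t,s$ exhibits the required symmetry of $\cT$; but without that coordinate choice the two restrictions of $\cT$ have no a~priori reason to agree.
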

\begin{proof}
By choosing appropriate coordinates on $V$ (and the induced dual coordinates on $V^\vee$), we can assume
that the equations $h$ and $h^\vee$ are simply equal. This holds since for a a general reductive prehomogeneous vector space $V$
we have $h^\vee(\lambda) = \overline{h(\overline{w})}$ if $(w_1,\ldots,w_n)$ are unitary coordinates and $(\lambda_1,\ldots,\lambda_n)$ the corresponding dual coordinates (here $h$ is a defining equation of the
discriminant of $V$). But it is known
(see \cite[Theorem 2.5]{granger_schulze_rims_2010}) that for a linear free divisor $D$, its defining equation $h$
can be defined over $\dQ$ in appropriate coordinates. Consider the following diagram
$$
\begin{tikzcd}
X\times V^\vee \ar[hookrightarrow]{rr}{(g,\id_{V^\vee})} \ar[swap,bend right=40]{ddddrr}{\psi:=(can\circ(g,\id_{V^\vee}),h^\vee)}
\ar{rrdd}{\varphi}
&&
V \times V^\vee \ar{rr}{\textit{dual}} \ar[swap]{dd}{(can,pr)}
&& V^\vee \times V
\ar{lldddd}{(can',h)}& X^\vee\times V \ar[swap, hookrightarrow]{l} \ar[bend left=30]{llldddd}{\Psi} \\ \\
&& \dA^1_{\lambda_0}\times V^\vee =\widetilde{V}^\vee \ar[swap]{dd}{\SC\phi=(\id_{\lambda_0},h^\vee)} \\ \\
&& \dA^1_{\lambda_0}\times \dA^1_t
\end{tikzcd}
$$
here $can': V^\vee\times V \rightarrow \dA^1_{\lambda_0}$ is given by $(f,p)\mapsto f(p)$. On the other hand, we write $\textit{dual}$ for the morphism given by identifying $V$ with $V^\vee$ (and vice versa) via the chosen
coordinates $w_1,\ldots,w_n$ on $V$ and their dual coordinates $\lambda_1,\ldots,\lambda_n$ on $V^\vee$ (so it is not
just the involution reversing the factors of $V\times V^\vee$ resp. $V^\vee\times V$). Nevertheless, we have
$can=can'\circ \textit{dual}$. It follows that $(can,h^\vee)=(can',h)\circ \textit{dual}$ since $h$ is defined over $\dQ$.
In particular, the morphism $\textit{dual}$ sends $X\times V^\vee= h^{-1}(h(p))\times V^\vee$ isomorphically to
$(h^\vee)^{-1}(h(p))\times V=X^\vee\times V$. Similarly, the subvariety $X \times D^\vee$ inside $X\times V^\vee$ is
send to $X^\vee\times D$.

It is easy to check that the above diagram commutes.
We conclude that we have an isomorphism
$$
\begin{array}{c}
(\phi\circ \varphi)_+\cO_{X\times V^\vee}(*(X\times D^\vee))
=\psi_+ \cO_{X\times V^\vee}(*(X\times D^\vee)) \\ \\
=(can', h)_+(\textit{dual})_+(g,\id_{V^\vee})_+\cO_{X\times V^\vee}(*(X\times D^\vee))
\\ \\= ((can', h)\circ \textit{dual}_{|X\times V^\vee})_+\cO_{X\times V^\vee}(*(X\times D^\vee)) \cong \Psi_+ \cO_{X^\vee\times V}(*(X^\vee\times D)).
\end{array}
$$
The second assertion follows by combining this result with Corollary \ref{cor:DimReduc}.
\end{proof}
Notice that this gives exactly the result in \cite[Theorem 4]{Sev2}, which in turn was based
on the rather involved algorithmic arguments of \cite[section 4]{dGMS}. Actually, it is possible
to show Proposition \ref{prop:CalcGMSystemsLFD} without assuming the (SK) hypotheses. However,
since Theorem \ref{theo:Invertibility} is not available in this case, one is forced to consider a partial Fourier-Laplace transformation of
the object $(k'\circ\iota)_+\cN^{\beta_0}$ from formula
\eqref{eq:k_iota} instead of the total Fourier-Laplace transform of $\check{\cM}$, as has been done in the proof of Theorem \ref{theo:TautSystMHM}. The latter can be expressed
as a Radon transformation, but not the former, and hence the argument runs quite differently (compare also  \cite{SevCastReich} where a similar strategy is used in the toric case). We postpone this
discussion to a subsequent paper. \\

\textbf{Remark:} The most basic case of linear free divisors (satisfying the (SK) hypotheses) is the
normal crossing divisor given by $h=w_1\cdot\ldots\cdot w_n$. It is well known that in this case $G=A_D=\dG^{n-1}_m$,
and so the tautological system $\cM(G,\rho,X,\beta)$ is nothing but the GKZ-system $\cM_A^\beta$, where
$$
A=
\begin{pmatrix}
  1      & 0      & 0      & \ldots & 0      & -1 \\
  0      & 1      & 0      & \ldots & 0      & -1 \\
  \vdots & \vdots & \vdots & \ldots & \vdots & -1 \\
  0      & \ldots &        &        &  1     & -1
\end{pmatrix}.
$$
In this case the exact sequence of Proposition \ref{prop:4TermSequence} is the same as in
\cite[Theorem 2.13]{Reich2}, and obviously the reduced module $\FL_{\dG_{m,t}}^{\textup{loc}}\cH^0\phi_+(\cM(*D^\vee))$
(or rather its restriction to $z=1$) is nothing but the quantum differential equation of the projective space $\dP^{n-1}$.
Notice that in this case the dimensional reduction can be done by a direct image (under the map $\phi=(\id_{\dA^1_{\lambda_0}},h^\vee)$,
as in the current paper, as well as by a direct image under an embedding $\dA^1_{\lambda_0}\times\dG_{m,t}\hookrightarrow \dA^1_{\lambda_0}\times V^\vee$, as has
been done in \cite[Section 3.1]{ReiSe}. As we have mentioned at several places, it is a natural question to ask whether the more general tautological systems defined by prehomogeneous
group actions (say under the current hypotheses, i.e., with a linear free divisor satisfying (SK) as discriminant) can also
be interpreted as quantum differential equations of some variety or orbifold. This is particularly interesting in the case of quiver
discriminants, since one may hope to construct an appropriate A-model directly from the given quiver.

\bibliographystyle{amsalpha}

\newcommand{\etalchar}[1]{$^{#1}$}
\def\cprime{$'$}
\providecommand{\bysame}{\leavevmode\hbox to3em{\hrulefill}\thinspace}
\providecommand{\MR}{\relax\ifhmode\unskip\space\fi MR }
\providecommand{\MRhref}[2]{  \href{http://www.ams.org/mathscinet-getitem?mr=#1}{#2}
}
\providecommand{\href}[2]{#2}

\vspace*{1cm}

\nd
Luis Narv\'{a}ez Macarro \\
Departamento de \'{A}lgebra \& Instituto de Matem\'{a}ticas (IMUS)\\
Facultad de Matem\'{a}ticas\\
Universidad de Sevilla\\
41080 Sevilla\\
Spain

narvaez@us.es
\vspace*{1cm}

\nd
Christian Sevenheck\\
Fakult\"at f\"ur Mathematik\\
Technische Universit\"at Chemnitz\\
09107 Chemnitz\\
Germany\\
christian.sevenheck@mathematik.tu-chemnitz.de

\end{document}